\documentclass[reqno, 11pt]{amsart}
\usepackage[top=4cm, bottom=3cm, left=3.2cm, right=3.2cm]{geometry}
\usepackage{enumerate}
\usepackage{amssymb,amsmath}
\usepackage{xcolor}
\usepackage{lineno}
\usepackage{hyperref}
\usepackage{enumitem}

\newtheorem{thm}{Theorem}

\newtheorem{lem}{Lemma}

\newtheorem{assumption}{Assumption}
\theoremstyle{definition}
\newtheorem{defn}{Definition}

\newtheorem*{Claim}{Claim}
\newtheorem*{Example}{Example}
\allowdisplaybreaks

\usepackage{lineno}

\numberwithin{equation}{section} \numberwithin{lem}{section}
\numberwithin{thm}{section} \numberwithin{prop}{section}
\numberwithin{cor}{section} \numberwithin{rem}{section}
\numberwithin{defn}{section}
\numberwithin{assume}{section}

	\newtheorem{remark}{Remark}[section]

	\catcode`@=11 \@addtoreset{equation}{section} \catcode`@=12
\title[Stochastic Keller-Segel system]{  Martingale Solutions to a Stochastic Keller-Segel System with nonlocal Source and Super-linear Noise }

\author{ Qian Li$^{\,1}$, Li Chen$^{\,2}$ and Jinhuan Wang$^{\,1,^*}$ }
\thanks{The work of L.Chen is partially supported by the Deutsche Forschungsgemein-schaft
(DFG, German Research Foundation)-547277619. The work of J. Wang is partially supported by National Natural Science Foundation of China Grants No. 12171218, Liaoning Provincial Natural Science Foundation Program (2024-MS-002).}
\thanks{* Corresponding author: Jinhuan Wang. }
\begin{document}
\maketitle
\begin{center}
{\footnotesize
1-School of Mathematics and Statistics, Liaoning University, Shenyang, 110036, P. R. China \\
Email: liqian990720@163.com; wangjh@lnu.edu.cn \\
 \smallskip
2-School of Business Informatics and Mathematics,
Universit\"{a}t Mannheim, 68131, Mannheim, Germany
\\
Email: li.chen@uni-mannheim.de
}
	\end{center}
\maketitle
 \date{} 
\begin{abstract}
Global nonnegative martingale solutions are shown to exist for a stochastic Keller–Segel system with a nonlocal Fisher–KPP source and super-linear multiplicative noise. The result is obtained for nonnegative initial data with no smallness assumption, provided that the nonlocal source term is dominant. The main difficulty stems from the absence of a coercive structure and the super-linear nature of the noise. An additional cut-off with finite $L^2$ norm in the classical Galerkin method is added to establish a well-posed approximation problem. Moreover, due to the nonlocal Fisher-KPP structure, it is necessary to prove the positivity of the approximating solution in order to obtain uniform estimates. 
In the compactness arguments, the usual tightness argument in the framework of Hilbert spaces (e.g., \cite{law2014existence, dhariwal2019global}) cannot be directly applied to the uniform estimates obtained in this paper. As a result, we develop a more general version of the compactness argument and tightness criterion, presented in the appendix, which will be applied throughout the paper. This allows for the global existence of nonnegative martingale solutions to be derived from Jakubowski's version of the Skorokhod Theorem, along with a thorough discussion of the convergence properties.
\end{abstract}
\maketitle

\maketitle
 \date{}

{\small {\bf Keywords:} Stochastic Keller-Segel system, Nonlocal non-linear Fisher-KPP, Martingale solution,  $Q$-Wiener process, Super-linear noise }

{\small {\bf 2010 MSC:} 60H15; 35K57; 35R60; 92C17}

\section{Introduction}
The Keller-Segel system in chemotaxis modeling was originally introduced by Keller and Segel \cite{keller1970initiation,keller1971model} to describe the directed movement of cells in response to chemical gradients in their environment. The Keller–Segel system with stochastic effects has attracted recent attention, particularly for its biological relevance, including works like \cite{chen2025well, tang2024strong}. In this paper, we study the following stochastic Keller–Segel system with a nonlocal Fisher–KPP source and super-linear multiplicative noise:
\begin{eqnarray}\label{equ1}
\begin{cases}	
	du  = \Delta u~dt-  \nabla  \cdot (u \nabla c) ~dt+ u^\alpha\big(1-\int_\mathcal{O}u^{\beta}dx\big)~dt +\sigma (u)~dW(t),&x\in\mathcal{O},t> 0,
    \\-\Delta c+c=u,&x\in\mathcal{O},t> 0,
    \\\nabla u\cdot \nu=\nabla c\cdot \nu=0,&x\in\partial\mathcal{O},t> 0,
    \\u (x,0) = u_{0}(x)\ge0,&x\in\mathcal{O},
 \end{cases} 
\end{eqnarray}
where $\mathcal{O}\subset\mathbb{R}^d,d\ge 3$ is a bounded domain with a  smooth boundary  and $\nu$ denotes the outward unit normal vector on  $\partial\mathcal{O}$. Here, $u$ represents the bacterial density, $c$ represents the chemical substance concentration,  $u^\alpha(1-\int_\mathcal{O}u^{\beta}dx)$, with $\alpha,\beta>1$, is the Fisher-KPP type reaction term representing non-linear growth under nonlocal resource consumption of the bacteria. Furthermore, $\sigma(u)dW(t)$ represents the driving external random force, where $W(t)$ is $Q$-Wiener process, whose concrete definition is given in Section two.

The deterministic Keller-Segel model has been widely studied in the last decades. While a complete review of this research field is beyond the scope of this paper, it is worth noting that key results include the global existence of solutions, finite-time blow-up phenomena, and the large-time asymptotic behavior of solutions. The classical deterministic model with reaction term can be written as 
\[
u_t  = \Delta u- \nabla  \cdot (u \nabla c) +f(u).
\]
The source term $f(u)$ models cell proliferation and death, where environmental resources may be consumed either locally or nonlocally and it can counteract the blow-up tendency produced by chemotaxis.
A prototypical example is the logistic damping $f(u)=\mu u(1-u)$, $\mu>0$, in which the quadratic absorption term $-\mu u^2$ plays a key role in preventing finite-time blow-up. 
Tello and  Winkler \cite{Tello07062007}  proved that if either $d\le2$ and $\mu>0$ is arbitrary, or $d\ge3$ and $\mu>\frac{d-2}{d}$, then for reasonably regular initial data $u_0$, there exists a global classical solution which remains uniformly bounded for all times.
Motivated by global resource limitation, one may also consider nonlocal logistic-type reactions depending on the total population mass or the total mass of some power of the population, which describes the competition of the individuals of the species for the resources of the environment and the cooperation to  survive. 
Negreanu and Tello  \cite{negreanu2013competitive} considered the logistic-type reaction
$f(u)=u(a_0-a_1u-\frac{a_2}{|\Omega|}\int _\Omega udx)$ with $a_0,a_1>0,a_2\in \mathbb{R}$.
If $a_2>0$, then the nonlocal reaction term contributes a competitive effect that counteracts the aggregation dynamics. Moreover, it is also proved in \cite{negreanu2013competitive} that as the population density increases, the local damping term $a_1u$ becomes dominant compared with the nonlocal term $\frac{a_2}{|\Omega|}\int_{\Omega}u\,dx$, in particular, $f(u)$ behaves asymptotically like $u(a_0-a_1u)$.
To further investigate the influence of the nonlocal non-linear logistic type sources, Bian and Chen \cite{bian2018nonlocal} considered the Fisher-KPP type nonlocal source $f(u)=u^\alpha(1-\int_\Omega u^\beta dx)$ with $\alpha> 1,\beta >1$. They proved global existence of solutions for $\alpha$ and $\beta$ under appropriate conditions.

The Keller-Segel type models for chemotaxis can be derived from stochastic interacting particle systems, as population size tends to infinity. Rigorous derivations of such limits can be found in \cite{stevens2000derivation,stevens2000stochastic}, or   recent works with convergence rate in \cite{bresch2023mean,bresch2019mean,chen2025mean}.  Additionally, the topic of the propagation of chaos and the mean-field limit can be found in \cite{carrillo2014derivation,jabin2017mean}.
For the deterministic Keller-Segel model, the underlying particle system is typically driven only by mutually independent noises.  In more realistic settings, however, particle systems may also be influenced by environmental noises, such as temperature, light, or sound, which do not vanish in the large population limit, these are intrinsic to a more realistic setting such as culturing bacteria. A natural way to account for these effects is to incorporate multiplicative noise into the deterministic dynamics, leading to stochastic Keller-Segel type models. Related rigorous derivations can be found in \cite{chen2025hegselmann, huang2021microscopic,nikolaev2025quantitative}. On the macroscopic level, the stochastic Keller-Segel system has been intensively studied in the last couple of years. Among them, the well-posedness and qualitative properties of the following stochastic Keller-Segel type system have been established.
\[
du = [\Delta u- \nabla  \cdot (u \nabla c)]dt +\sigma(u)dW(t).
\]
Flandoli, Galeati and Luo \cite{flandoli2021delayed} proved local existence for a class of non-linear SPDEs, including stochastic Keller-Segel equations on the torus. 
Huang and Qiu \cite{huang2021microscopic} proved  the noise is of divergence type, and  obtained local existence and uniqueness of weak solutions on an interval $[0,T]$, where the existence time $T$ depends on the $L^4$-norm of the initial data. They further showed that if the $L^4$-norm of the initial data is sufficiently small, then a global weak solution exists. 
Misiats, Stanzhytskyi and Topaloglu \cite{misiats2022global} investigated how different stochastic perturbations affect the solution behavior, in particular  considering divergence-form noise $\sigma(u)=\nabla u$  and linear multiplicative noise $\sigma(u)=u$. For divergence form noise, they obtained global weak solutions for small initial data and  for linear multiplicative noise, they proved finite time blow-up with positive probability for any non-trivial initial data.
Tang and Wang \cite{tang2024strong} studied  multiplicative noise and established local existence and pathwise uniqueness of strong solutions on $\mathbb R^d$ ($d\ge2$). For non-autonomous linear noise, they further proved global existence and large-time decay properties of strong solutions. When the noise is non-linear and sufficiently strong, they showed that it can prevent blow-up. Hausenblas, Mukherjee and Tran \cite{hausenblas2022one}  proved the existence of a martingale solution for a one-dimensional stochastic Keller-Segel system, where the noise is interpreted in the Stratonovich sense. 

If additional (local) logistic source or non-linear noise is added, Chen, Zhai and Zhang \cite{chen2025well} obtained the well-posedness of the stochastic Keller-Segel system on a two-dimensional bounded domain. More precisely, they proved the existence of a unique global nonnegative mild solution for any nonnegative initial data under either of assumptions: the noise $\sigma(u)$ is linear and the logistic type source $f(u)$ satisfies $f(0)\ge 0,~f(s)\leq c_1-\mu s^2$ where $c_1,\mu >0$ or the noise $\sigma(u)=\|u\|_q^ru$ is non-linear and the logistic type source $f(u)$ satisfies $f(0)\ge 0,~f(s)\leq c_2+\mu' |s|^n$.  From the viewpoint of stochastic analysis, in our work, we introduce in \eqref{equ1} a superlinear multiplicative
noise to capture state-dependent perturbations whose intensity increases more than linearly with the
population density. To counterbalance this stochastic amplification, we incorporate a nonlocal
Fisher-KPP term, which induces an effective global damping when $\int_{\mathcal O} u^\beta\,dx$ becomes
large and is essential for obtaining uniform estimates and compactness. The stochastic
Keller-Segel system with genuinely superlinear multiplicative noise without such a nonlocal
damping mechanism remains largely open, which will be investigated in  future projects.  This requires definitely new ideas will likely  to establish global well-posedness.

In this work, we study the stochastic Keller-Segel  nonlocal Fisher-KPP system \eqref{equ1} with multiplicative super-linear noise and establish the global existence of nonnegative martingale solutions by using Galerkin approximation and tightness argument.
The existence results for martingale solutions and the corresponding methods to other types of stochastic partial differential equations can be found in, e.g., 
\cite{braukhoff2024global,brzezniak2013existence,brzeźniak2019fractionally,brzezniak2017invariant,law2014existence,dhariwal2019global,tang2018pathwise}.

For the problem we considered in this paper, the main difficulty arises from the absence of coercivity in the Galerkin scheme and the super-linear noise in the stochastic integral. To overcome these difficulties, we introduce an additional cut-off in the Galerkin method, ensuring a well-posed approximation. In the uniform estimate, the aggregation effect and the super-linear noise are both dominated by the nonlocal source term, applying delicate Sobolev inequalities, which is the most technical step in the proof. Furthermore, since the standard tightness argument in Hilbert spaces (e.g., \cite{law2014existence, dhariwal2019global}) does not apply here, we develop a more general compactness argument and tightness criterion, provided in the appendix. This enables the global existence of nonnegative martingale solutions via Jakubowski’s Skorokhod Theorem, with a detailed convergence analysis.

\vskip5mm
Next, we briefly outline the main proof ideas, novelties, and arrangement of this paper.  

In Section two, we introduce the basic setting for the $Q$-Wiener process and state the assumptions on the noise coefficient $\sigma(u)$. We give the definitions of solutions used in this work and present the main result. 

In Section three, we formulate a stochastic Galerkin approximation equation (\ref{equ2}) of the system (\ref{equ1}) on finite-dimensional space $H_n$  and show that the equation (\ref{equ2}) admits a nonnegative strong solution $u_n$.  
     This problem can not be solved directly by classical theory for stochastic differential systems due to the lack of the  weak coercivity condition. Instead, we prove this result in two steps.
     \begin{itemize}
     	\item \underline{Step 1.} We introduce a smooth cut-off function for $\|u_n\|_{L^2(\mathcal{O})}\leq N$ and the positive part $u_{n}^{+}$ to ensure that the reaction term  $(u_{n}^{+})^\alpha(1-\int_{\mathcal O}(u_{n}^{+})^\beta dx)$ is well-defined. This makes it possible to obtain that for each $N>0$, the truncated equation admits a pathwise unique global nonnegative strong solution $u_n^N\in C([0,\infty);H_n)$ $\mathbb{P}$-a.s.  Therefore, we conclude that the Galerkin equation \eqref{equ2} admits a nonnegative local solution $u_n^N(\cdot\wedge\tau_n^N)\in C([0,\infty);H_n)$ $\mathbb{P}$-a.s., where the stopping time $\tau_n^N$ comes from the cut-off $\|u_n\|_{L^2(\mathcal{O})}\leq N$.
        \item \underline{Step 2.} We establish pathwise uniqueness for the local strong solutions and thus obtain the existence of the maximal nonnegative local solution $(u_n,\tau^*)$  to equation \eqref{equ2}. Furthermore, under the given assumptions for $\alpha$, $\beta$, and $\gamma$,  we show that the nonlocal Fisher-KPP source  term can be used to prevent finite-time blow-up arising from the chemotactic term and the super-linear noise term.  Consequently, uniform estimates for $u_n$ can be obtained by using Sobolev type inequalities and It\^{o}'s formula. This is the key technical ingredient in the analysis of this paper.  Thus, we can derive $\tau^*=T$ almost surely. Therefore, \eqref{equ2} admits a pathwise unique global nonnegative solution   $u_n$.
          \end{itemize}

In Section four, we construct a global martingale solution $ ((\tilde{\Omega} ,\tilde{\mathcal{F}}, \{\tilde{\mathcal{F}}_{t}\} _{t\ge 0}  ,\tilde{\mathbb{P}}  ), \tilde{u},\tilde{W})$  to the equation (\ref{equ1}), by using a general tightness criterion. 
\begin{itemize}
\item \underline{Step 1.} Based on the uniform estimates obtained for $u_n$ from \eqref{equ2}, we proceed the compactness argument to finalize the proof. In the context of the compactness arguments, the usual tightness argument, which is commonly applied in the framework of Hilbert spaces (as seen in works like \cite{law2014existence, dhariwal2019global}), cannot be directly used in this case due to the uniform estimates obtained for $u_n$. Actually, it allows us to use the more general  topological space 
\vskip-7mm
$$
\hspace*{2cm}\mathcal{Z}
:= C([0,T];\mathbb U') \cap L^{q}_{w}(0,T;\mathbb{V}) \cap L^{p}(0,T;E) \cap C([0,T];\mathbb{H}_{w}),\,1<q\le p<\infty,
$$ 
where $\mathbb U$, $\mathbb V$, and $E$ are not necessary Hilbert spaces.
To address this, we develop a more general version of the compactness argument, which incorporates a revised tightness criterion. This is provided in detail in the appendix. The criterion is more flexible and accounts for the specific structure of the stochastic Keller-Segel system under consideration. 
Then we obtain  tightness of the laws $\{\mathcal{L}\big((u_n,W_n)\big):n\in\mathbb{N}\}$. 
\item \underline{Step 2.} Applying   Jakubowski's version of  the Skorokhod Theorem for non-metric spaces, there exists a subsequence $\{(u_n,W_n):n\in\mathbb{N}\}$, which is not relabeled, a probability space $(\tilde{\Omega} ,\tilde{\mathcal{F}}, \{\tilde{\mathcal{F}}_{t}\} _{t\ge 0}  ,\tilde{\mathbb{P}}  )$ and random variables $(\tilde{u},\tilde{W})$, $(\tilde{u}_n,\tilde{W}_n)$ with $n\in\mathbb{N}$ such that $(\tilde{u}_n,\tilde{W}_n)$ has the same law as $({u}_n,{W}_n)$. Furthermore, $
        (\tilde{u}_n,\tilde{W}_n)\to (\tilde{u},\tilde{W})$, $\tilde{\mathbb{P}}$-a.s. Finally, using the uniform estimates for $\tilde u_n$ together with Vitali's convergence theorem, we can prove the convergences in the non-linear terms and conclude that
 $(  (\tilde{\Omega} ,\tilde{\mathcal{F}}, \{\tilde{\mathcal{F}}_{t}\} _{t\ge 0}  ,\tilde{\mathbb{P}}  ), \tilde{u},\tilde{W})$ is a global martingale solution to the equation (\ref{equ1}).
\end{itemize}

 \section{Preliminaries and the main result}
In this section, we set up the functional analytic and probabilistic framework throughout the paper. We introduce the notation for the relevant function spaces and recall basic facts on the $Q$-Wiener process and the associated stochastic integral;  see \cite[Chapter 2]{liu2015stochastic} for details. We give the assumptions on  the noise coefficient $\sigma(u)$ for this paper. We provide  some auxiliary results which will be used in the analysis and  uniform estimates. Additionally, we present the definition of Martingale solution and the main result. 

 \subsection{Q-Wiener process}  
  Let $(U,\langle,\rangle_U)$  be a separable Hilbert space. The space of all bounded linear operators from $U$ to $L^2(\mathcal{O})$ is denoted by $L(U,L^2(\mathcal{O}))$, particularly we write $L(U)$ instead of $L(U,U)$. Let $\xi_l,l\in\mathbb{N}$ be an orthonormal basis of $U$, the space of Hilbert-Schmidt operators from $U$ to $L^2(\mathcal{O})$ is defined by 
 \begin{align*}
     L_2(U,L^2(\mathcal{O}))=\Big\{B\in L(U,L^2(\mathcal{O})):\sum _{l=1}^{\infty}\|B\xi_l\|_{L^2(\mathcal{O})}^2<\infty\Big\}
 \end{align*}
 and it is endowed with the norm $\|B\|_{L_2(U,{L^2(\mathcal{O})})}^2=\sum _{l=1}^{\infty}\|B\xi_l\|_{L^2(\mathcal{O})}^2$. 
 
 Let $Q\in L(U)$ be a nonnegative, symmetric and  finite trace operator, defined by 
 \[
 Q\xi_l=\lambda_l\xi_l,\quad\lambda_l\ge 0,l\in\mathbb{N},\quad\sum_{l}^\infty\lambda_l<\infty.
 \]
 Let   $(\Omega,\mathcal F,\{\mathcal F_t\}_{t\ge0},\mathbb P)$ be a probability space  with a complete, right-continuous filtration. We denote $\{W_l(t)\}_{l\in\mathbb N}$ to be independent real-valued $\{\mathcal F_t\}_{t\ge 0}$-Brownian motions. The $U$-valued $Q$-Wiener process is given by
	\[
	W^Q(t):=\sum_{l=1}^\infty \sqrt{\lambda_l}\,\xi_l\,W_l(t),\qquad t\in[0,T].
	\]
	Denote $U_0:=Q^{1/2}(U)$ equipped with the inner product
	\[
	\langle f_1,f_2\rangle_{U_0}:=\langle Q^{-1/2}f_1,Q^{-1/2}f_2\rangle_{U},\quad\forall f_1,\,f_2\in U_0.
	\]
	Then $Q^{1/2}\xi_l=\sqrt{\lambda_l}\xi_l$ and $\{\sqrt{\lambda_l}\xi_l\}_{l\in\mathbb N}$ is an orthonormal basis of $U_0$. We denote the space of stochastically integrable processes by
    \begin{align*}
        &\mathcal N_W^2(0,T;{L^2(\mathcal{O})})\\
	:=&\Big\{\Phi:\Omega\times[0,T]\to L_2(U_0,{L^2(\mathcal{O})})\ \Big|\ 
	\Phi\ \text{is predictable}, \,\mathbb E\int_0^T\|\Phi(s)\|_{L_2(U_0,{L^2(\mathcal{O})})}^2\,ds<\infty
	\Big\}.
    \end{align*}
	For any $\Phi\in\mathcal N_W^2(0,T;{L^2(\mathcal{O})})$ and $t\in[0,T]$, the It\^o integral $\displaystyle\int_0^t \Phi(s)\,dW^Q(s)$ is well defined and given by
	\[
	\int_0^t \Phi(s)\,dW^Q(s)
	=\sum_{l=1}^\infty \int_0^t \Phi(s)\big(Q^{1/2}\xi_l\big)\,dW_l(s)
	=\sum_{l=1}^\infty \int_0^t \Phi(s)\big(\sqrt{\lambda_l}\xi_l\big)\,dW_l(s),
	\quad \mathbb P\text{-a.s.},
	\]
	where the series converges in $L^2(\Omega;C([0,T];{L^2(\mathcal{O})}))$. Moreover, the Burkholder-Davis-Gundy inequality yields (see, e.g., \cite{gawarecki2010stochastic}, Lemma 3.1)
	\begin{equation}\label{BDG}
		\mathbb E\Big[\sup_{t\in[0,T]}\Big\|\int_0^t \Phi(s)\,dW^Q(s)\Big\|_{L^2(\mathcal{O})}^{2p}\Big]
		\le C_p\,\mathbb E\Big(\int_0^T \|\Phi(s)\|_{L_2(U_0,{L^2(\mathcal{O})})}^2\,ds\Big)^p,\quad\forall p\ge1,
	\end{equation}
	for some constant $C_p>0$ depending only on $p$. 
	
 \subsection{Assumptions on the noise coefficient.}
 Let $\{e_l\}_{l\in\mathbb N}$ be an orthonormal basis of $L^2(\mathcal O)$ consisting of eigenfunctions of the Neumann Laplacian, i.e.
	\begin{align}\label{f9}
		-\Delta_N e_l=\mu_l e_l \quad\text{in }\mathcal O,\quad
		\nabla e_l\cdot\nu=0 \quad\text{on }\partial\mathcal O,
	\end{align}
    where $\nu$ is the outward unit normal vector on $\partial\mathcal{O}$ and $0=\mu_1\le \mu_2\le\cdots$ and $\mu_l\to\infty$ as $l\to\infty$.
Let us impose the following assumptions on the noise coefficient  $\sigma$. This super-linear multiplicative noise is motivated by \cite{salins2022global}.
    \begin{assumption}\label{ass}
    Assume that $\sigma: L^\gamma(\mathcal O)\to L_2\big(U_0,L^2(\mathcal O)\big)$.
        For all $u,v\in L^\gamma(\mathcal{O}) $,  $\gamma\ge 2$,  there exists a constant $C>0$ such  that 
\begin{align}
  \label{ass3}   &\|\sigma(u)\|_{L_2{(U_0,L^2(\mathcal{O}))}}^2\leq C\|u\|_{L^\gamma(\mathcal{O})}^\gamma;\\
   \label{ass4} &\|\sigma(u)-\sigma(v)\|_{L_2{(U_0,L^2(\mathcal{O}))}}^2\leq C(\|u\|_{L^\gamma(\mathcal{O})}^{\gamma-2}+\|v\|_{L^\gamma(\mathcal{O})}^{\gamma-2})\|u-v\|_{L^\gamma(\mathcal{O})}^2.
\end{align} 	
In particular, for every measurable set $B\subset\mathcal O$ and all $u,v\in L^\gamma(\mathcal O)$ such that 
		$u=v$ a.e. on $B$, it holds
		\begin{equation}\label{ass2}
			\mathbf 1_B\,\sigma(u)(Q^\frac{1}{2}\xi_l)=\mathbf 1_B\,\sigma(v)(Q^\frac{1}{2}\xi_l),\quad\forall \,l\in\mathbb N.
		\end{equation}
    \end{assumption}
Consequently, if $u$ is an $L^\gamma(\mathcal O)$-valued predictable process  with
	$\mathbb E\displaystyle\int_0^T\|u(s)\|_{L^\gamma(\mathcal{O})}^\gamma ds<\infty$, 	then the stochastic integral $\displaystyle\int_0^t \sigma(u(s))\,dW^Q(s)$ is well defined and satisfies
	\[
	\int_0^t \sigma(u(s))\,dW^Q(s)
	=\sum_{l=1}^\infty \int_0^t \sigma(u(s))\big(Q^{1/2}\xi_l\big)\,dW_l(s),
	\qquad \mathbb P\text{-a.s.},
	\]
	where the series converges in $L^2(\Omega;C([0,T];L^2(\mathcal O)))$. In particular, it
is a continuous $L^2(\mathcal O)$-valued martingale.

For convenience, we use the following notation for the stochastic integral in this paper.
$$
\int_0^t \sigma(u(s))\,dW(s):=\int_0^t \sigma(u(s))\,dW^Q(s).
$$

\begin{remark}
    	 These conditions   play a crucial role in constructing the nonnegativity of solutions. The condition \eqref{ass3} implies that $\sigma(u)=0$ if $u=0$.  Together with \eqref{ass2},  it follows that   for every $u\in L^\gamma(\mathcal O)$ and each  $l\in\mathbb{N}$,
         \begin{align}\label{ass5}
            \mathbf 1_{\{u<0\}}\sigma(u)(Q^\frac{1}{2}\xi_l)=\sigma(u\wedge 0)(Q^\frac{1}{2}\xi_l).
         \end{align}
         In fact, let $B:=\{x\in\mathcal O:\ u(x)<0\}$. Since $u=u\wedge0$ a.e. on $B$, by \eqref{ass2} we have
	\[
	\mathbf 1_B\sigma(u)(Q^\frac{1}{2}\xi_l)=\mathbf 1_B\sigma(u\wedge0)(Q^\frac{1}{2}\xi_l).
	\]
	Moreover, since $u\wedge0=0$ a.e. on $B^c$ and  $\sigma(0)=0$, using \eqref{ass2} yields
	\[
	\mathbf 1_{B^c}\sigma(u\wedge0)(Q^\frac{1}{2}\xi_l)=\mathbf 1_{B^c}\sigma(0)(Q^\frac{1}{2}\xi_l)=0.
	\]
	 Consequence,
	\[
	\sigma(u\wedge0)(Q^\frac{1}{2}\xi_l)=\mathbf 1_B\sigma(u\wedge 0)(Q^\frac{1}{2}\xi_l)=\mathbf 1_B\sigma(u)(Q^\frac{1}{2}\xi_l)=\mathbf 1_{\{u<0\}}\sigma(u)(Q^\frac{1}{2}\xi_l).
	\]
\end{remark}
\vskip3mm

Next, we give a concrete example to show that the Assumption \ref{ass} for the nonlinear noise is reasonable. 
\begin{Example}
    In this example, we take $U:=L^2(\mathcal{O}),\,U_0:=Q^{1/2}(L^2(\mathcal{O}))$. 
	   Now we set  $Q=(I-\Delta_N)^{-\rho},\,$ namely
\begin{align}
    Qe_l=\lambda_le_l,~\lambda_l=(1+\mu_l)^{-\rho},
\end{align}
where $\{e_l\}_{l\in\mathbb{N}}$ is an orthonormal basis of $L^2(\mathcal{O})$ established in (\ref{f9}).
By  the fact  $\mu_l\sim l^\frac{2}{d}$  and $\|e_l\|_{L^\infty(\mathcal{O})}\leq Cl^\frac{d-1}{4}$ (\cite{grieser2002uniform}, Theorem 1),  we know that for   $\rho>d-\frac{1}{2}$, it holds 
\begin{align}
    \sum _{l=1}^{\infty}\lambda_l\|e_l\|_{L^\infty(\mathcal{O})}^2< \infty.
\end{align} 
We define $\sigma(u):U_0\to  L^2(\mathcal{O})$ by
\begin{align}\label{f8}
    \sigma (u)(Q^\frac{1}{2}h)=\sum_{l=1}^{\infty}\sqrt{\lambda_l}\langle e_l,h\rangle_{L^2(\mathcal O)}e_l\cdot g(u),\quad \forall h\in L^2(\mathcal O),
\end{align}
where $g:\mathbb{R}\to \mathbb{R}$ and $g(r)= a|r|^\frac{\gamma-2}{2}r$ with a positive constant  $a$.

 Next, we verify that the noise coefficient $\sigma$
defined above satisfies all the conditions in the Assumption \ref{ass}. By the definition of  the Hilbert-Schmidt norm, we  get
\begin{align*}
\|\sigma(u)\|_{L_2{(U_0,L^2(\mathcal{O}))}}^2&=\sum _{l=1}^{\infty}{\lambda_l}\int_\mathcal{O}\big| e_l(x)g(u(x))\big|^2dx\\
   &\leq \sum _{l=1}^{\infty}\lambda_l\|e_l\|_{L^\infty(\mathcal{O})}^2\int_\mathcal{O}|g(u(x))|^2dx\\&\leq a^2\sum_{l=1}^{\infty}\lambda_l\|e_l\|_{L^\infty(\mathcal{O})}^2\int_\mathcal{O}|u(x)|^\gamma dx\leq C\|u\|_{L^\gamma(\mathcal{O})}^\gamma.
\end{align*}
This finishes the proof of \eqref{ass3}.
 In order to prove \eqref{ass4}, we introduce the following  elementary inequality, for any $r,s\in \mathbb{R},p\ge2$
\begin{align}\label{d11}
  \big||r|^{p-2}r-|s|^{p-2}s\big|\leq  ({p-1})(|r|^{p-2}+|s|^{p-2})|r-s|.
\end{align}
In our example,  by the above inequality and the H\"older inequality, we have 
\begin{align*}
    &\|\sigma(u)-\sigma(v)\|_{L_2{(U_0,L^2(\mathcal{O}))}}^2\\=&\sum _{l=1}^{\infty}{\lambda_l}\int_\mathcal{O}\big|e_l(x)g(u(x))-e_l(x)g(v(x))\big|^2dx\\\leq& \sum_{l=1}^{\infty}\lambda_l\|e_l\|_{L^\infty(\mathcal{O})}^2\int_\mathcal{O}\big|g(u(x))-g(v(x))\big|^2dx\\\leq &(\frac{a\gamma}{2})^2\sum_{l=1}^{\infty}\lambda_l\|e_l\|_{L^\infty(\mathcal{O})}^2\int_\mathcal{O}(|u(x)|^{\gamma-2}+|v(x)|^{\gamma-2})|u(x)-v(x)|^2dx\\\leq& C(\|u\|_{L^\gamma(\mathcal{O})}^{\gamma-2}+\|v\|_{L^\gamma(\mathcal{O})}^{\gamma-2})\|u-v\|_{L^\gamma(\mathcal{O})}^2.
\end{align*}
	Then, we verify \eqref{ass2}. Let $B\subset\mathcal O$ be measurable and assume $u=v$ a.e.\ on $B$.
	Then $g(u)=g(v)$ a.e.\ on $B$, and for every $l$,
	\[
	\mathbf 1_B\,\sigma(u)(Q^{1/2}e_l)
	=\mathbf 1_B\,\sqrt{\lambda_l}e_l\,g(u)
	=\mathbf 1_B\,\sqrt{\lambda_l}e_l\,g(v)
	=\mathbf 1_B\,\sigma(v)(Q^{1/2}e_l).
	\]
Therefore,  this example holds true for \eqref{ass4}.
\end{Example}
\subsection{The  elliptic  problem.}
  Let $c$ be a solution satisfying
 \begin{eqnarray}\label{equ3}
\begin{cases}	
	-\Delta c+c=u,&x\in\mathcal{O},
    \\\nabla c\cdot \nu=0,&x\in\partial\mathcal{O}.
 \end{cases} 
\end{eqnarray}
If $u\in L^p(\mathcal {O})$ with $1<p<\infty$, applying the classical elliptic regularity  theory  (\cite{grisvard2011elliptic}, Chapter 2) and (\cite{gilbarg1977elliptic}, Chapter 6.7, Chapter 9.5),   then there exists a unique  solution $c\in W^{2,p}(\mathcal{O})$, moreover
\begin{align}\label{c}
    \|c\|_{W^{2,p}(\mathcal{O})}\leq C\|u\|_{L^p(\mathcal{O})},
\end{align}
where $C$ depends on $d,p,\mathcal{O}$. By the Sobolev embedding $W^{2,p}(\mathcal{O})\hookrightarrow C^{1,1-\frac{d}{p}}(\mathcal{O}),\,p>d$, it holds 
\begin{align}\label{d}
    \|\nabla c\|_{L^\infty(\mathcal{O})}+\|c\|_{L^\infty(\mathcal{O})}\leq C\|u\|_{L^p(\mathcal{O})}.
\end{align}
Actually, in the above classical elliptic regularity theory $\partial\mathcal{O}\in C^{1,1}$ is needed.

\subsection{Gagliardo-Nirenberg inequality}
The following lemma given in \cite{bian2016nonlocal} will be used frequently in this paper to  get the uniform estimates.
\begin{lem}\label{lem1}
    Let $p = \frac{2d}{d-2}$, $1 \leq r < q < p$ and $\frac{q}{r} < \frac{2}{r} + 1 - \frac{2}{p}$, then for $v\in H^1(\mathcal{O}) \cap L^r(\mathcal{O})$, it holds
\[
\|v\|_{L^q(\mathcal{O})}^q \leq C(d)\left( C_0^{-\frac{\lambda q}{2 - \lambda q}} + C_1^{-\frac{\lambda q}{2 - \lambda q}} \right) \|v\|_{L^r(\mathcal{O})}^\zeta + C_0 \|\nabla v\|_{L^2(\mathcal{O})}^2 + C_1 \|v\|_{L^2(\mathcal{O})}^2, \quad d \geq 3.
\]
Here, $C$ is a constant relying on $d$, and $C_0, C_1 > 0$ are arbitrary constants,
\[
\lambda = \frac{\frac{1}{r} - \frac{1}{q}}{\frac{1}{r} - \frac{1}{p}}\in(0,1), \quad \text{and} \quad \zeta = \frac{2-\frac{2q}{p}}{\frac{2 - q}{r} - \frac{2}{p} + 1}.
\]
  \end{lem}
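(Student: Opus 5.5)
The plan is to combine the classical Gagliardo--Nirenberg inequality with Young's inequality, applied twice: once to absorb the gradient part and once to absorb the lower-order $L^2$ part. Write $p=\frac{2d}{d-2}$. By H\"older interpolation between $L^r$ and $L^p$,
\[
\|v\|_{L^q}\le \|v\|_{L^r}^{1-\lambda}\|v\|_{L^p}^{\lambda},\qquad \frac1q=\frac{1-\lambda}{r}+\frac{\lambda}{p},
\]
which is exactly the stated value of $\lambda\in(0,1)$, using $r<q<p$. Sobolev's embedding $H^1(\mathcal O)\hookrightarrow L^p(\mathcal O)$ holds since $d\ge3$ and $\partial\mathcal O$ is smooth. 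It gives
\[
\|v\|_{L^p}\le C(d)\bigl(\|\nabla v\|_{L^2}+\|v\|_{L^2}\bigr).
\]
Raising to the power $q$ yields
\[
\|v\|_{L^q}^q\le C\|v\|_{L^r}^{(1-\lambda)q}\|\nabla v\|_{L^2}^{\lambda q}+C\|v\|_{L^r}^{(1-\lambda)q}\|v\|_{L^2}^{\lambda q}.
\]

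The key point is that $\lambda q<2$, so that Young's inequality with exponents $\frac{2}{\lambda q}$ and $\frac{2}{2-\lambda q}$ applies. I will check this directly. The condition $\lambda q<2$ reads $q\bigl(\frac1r-\frac1q\bigr)<2\bigl(\frac1r-\frac1p\bigr)$, that is, $\frac qr-1<\frac2r-\frac2p$. This is precisely the hypothesis $\frac qr<\frac2r+1-\frac2p$.

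Then for each term, weighted Young's inequality gives
\[
C\|v\|_{L^r}^{(1-\lambda)q}\|\nabla v\|_{L^2}^{\lambda q}\le C_0\|\nabla v\|_{L^2}^2+C(d)\,C_0^{-\frac{\lambda q}{2-\lambda q}}\|v\|_{L^r}^{\frac{2(1-\lambda)q}{2-\lambda q}}.
\]
The analogous bound holds with $C_1$ and $\|v\|_{L^2}$ in place of $C_0$ and $\|\nabla v\|_{L^2}$. The power of $C_0$ comes from writing $ab=(\epsilon a^{\lambda q/2})\cdot(\epsilon^{-1}b)$ with $\epsilon^{2/(\lambda q)}\sim C_0$.

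It remains to verify that $\zeta=\frac{2(1-\lambda)q}{2-\lambda q}$ agrees with the stated formula, which is routine algebra. Write $A=\frac1r-\frac1p$, so that $1-\lambda=\frac{1/q-1/p}{A}$. Then
\[
\zeta=\frac{2q(1/q-1/p)}{2A-q(1/r-1/q)}=\frac{2-\frac{2q}{p}}{\frac{2}{r}-\frac{2}{p}-\frac{q}{r}+1},
\]
and this matches the stated $\zeta$.

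The only delicate step is the exponent condition $\lambda q<2$, which is exactly the hypothesis. Everything else is standard. The constant depends on $d$ and, through Sobolev's embedding, on $\mathcal O$; this is presumably absorbed into the notation $C(d)$.
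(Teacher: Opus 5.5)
The paper gives no proof of this lemma; it simply quotes it from \cite{bian2016nonlocal}, so there is no in-paper argument to compare with. Your derivation is the standard one behind such a statement, and it is correct. The H\"older interpolation exponent is exactly the stated $\lambda$. The condition $\lambda q<2$, which Young's inequality with exponents $\frac{2}{\lambda q}$ and $\frac{2}{2-\lambda q}$ requires, is equivalent to the hypothesis $\frac qr<\frac2r+1-\frac2p$. The weight $C_0^{-\lambda q/(2-\lambda q)}$ comes out of weighted Young as you describe, and $\zeta=\frac{2(1-\lambda)q}{2-\lambda q}$ reduces to the stated formula. Using the full $H^1$ norm in the embedding $H^1(\mathcal O)\hookrightarrow L^{2d/(d-2)}(\mathcal O)$ is what produces the extra $C_1\|v\|_{L^2(\mathcal O)}^2$ term, which is the bounded-domain feature of the statement.

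The only thing to tighten is the bookkeeping of the constant. Besides the dependence on $\mathcal O$, which you flag, the constant your argument produces also depends on $q$ and $r$ through $\lambda q$. After Young it contains factors such as $\frac{2-\lambda q}{2}$ and $(2C_S)^{2\lambda q/(2-\lambda q)}$, with $C_S$ the Sobolev constant. So the ``$C(d)$'' of the statement should be read as $C(d,q,r,\mathcal O)$. This is harmless for the paper, since in the Claim inside the proof of Lemma~\ref{lem3} the lemma is applied only with fixed exponents $k$ and $r=\frac{\alpha+1+\beta}{2}$.
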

\subsection{Finite Dimensional Projection}
	Let $\{e_l\}_{l\in\mathbb{N}}$ be an orthonormal basis of $L^2(\mathcal O)$ given in (\ref{f9}).
Fix $n\in \mathbb{N}$, let $H_n:=\text{span}\{e_1,...,e_n\}$ with the norm inherited from $L^2(\mathcal{O})$. Let $\Pi_n$ be the projection operator from $L^2(\mathcal{O})$ to $H_n$ defined by 
\[
\Pi_nv=\sum_{l=1}^{n} (v,e_l)_{L^2(\mathcal{O})}e_l,\quad \forall v\in L^2(\mathcal{O}).
\]
Then  $\Pi_n$ is self-adjoint and contractive on $L^2(\mathcal O)$
i.e.,
\begin{align}
	&(\Pi_n u, v)_{L^2(\mathcal O)} = (u,\Pi_n v)_{L^2(\mathcal O)}, \qquad \forall u,v\in L^2(\mathcal O), \label{3.1}\\
	&\|\Pi_n u\|_{L^2(\mathcal O)} \le \|u\|_{L^2(\mathcal O)}, \qquad \forall u\in L^2(\mathcal O). \label{3.2}
\end{align}

\subsection{Main Theorem}
For convenience, we first give the definition of Martingale solution to the stochastic Keller-Segel problem \eqref{equ1}.
  \begin{defn}(Martingale solution)\label{def}
  Let $T > 0$, a global martingale solution on $[0,T]$ to the equation (\ref{equ1}) is a triple\begin{align*}
 \big(  (\tilde{\Omega} ,\tilde{\mathcal{F}}, \{\tilde{\mathcal{F}}_{t}\} _{t\ge 0}  ,\tilde{\mathbb{P}}  ), \tilde{u},\tilde{W}\big) 
  \end{align*}such that
  \item[(i)]$\tilde{\mathfrak{A}}:= (\tilde{\Omega} ,\tilde{\mathcal{F}}, \{\tilde{\mathcal{F}}_{t}\} _{t\ge 0}  ,\tilde{\mathbb{P}}  )$ is a probability space with a complete, right-continuous filtration;
   \item[(ii)]$\tilde{W}$ is a $Q$ Wiener process   on $\tilde{\mathfrak{A}}$;
 \item[(iii)]$\tilde{u}(t)$ is a $\tilde{\mathcal{F}}_{t}$-measurable process  such that
\begin{align*}
   \tilde{u } \in L^2(\tilde{\Omega};L^\infty([0,T];L^2(\mathcal{O})))\cap L^2(\tilde{\Omega};L^2([0,T];H^1(\mathcal{O})))\cap L^k(\tilde{\Omega};L^k([0,T];L^k(\mathcal{O}))),
\end{align*}
where $ k\in(2,\frac{2}{d+2}(\alpha-1+\beta)+2)$. Moreover,
the law of $\tilde{u}(0)$ is the same as  $u_0$ and  for all $t\in[0,T]$ and for all $\phi \in H^1(\mathcal{O})$,  $\tilde{u}$ satisfies
\begin{align*}
    (\tilde{u}(t),\phi)_{L^2(\mathcal{O})}=&(\tilde{u}(0),\phi)_{L^2(\mathcal{O})}-\int _0^t\int_\mathcal{O}\nabla\tilde{u}(s)\cdot\nabla\phi dx ds+  \int _0^t\int_\mathcal{O} \tilde{u}(s)\nabla \tilde{c}(s)\cdot\nabla  \phi dxds
    \\&+\int _0^t\int_\mathcal{O} \tilde{u}^\alpha(s)(1-\int_\mathcal{O}\tilde{u}^{\beta}(s)dx)\phi dx ds+\Big(\int _0^t\sigma(\tilde{u}(s))d\tilde{W}(s),\phi \Big)_{L^2(\mathcal{O})}.
\end{align*}
 \end{defn}

 \begin{thm}\label{thm1}(Global existence)
 Assume that there exists a nonnegative sequence $u_n(0)\in L^2(\Omega,\mathcal{F}_0;H_n)$ which satisfy that  $\sup_{n\ge1}\mathbb{E}\|u_n(0)\|_{L^2(\mathcal{O})}^2<\infty$  such that $u_n(0)\to u_0$ in $L^2(\Omega;L^2(\mathcal{O}))$.   Let the spatial dimension $d\ge 3$,  the constant $\alpha>1,\beta>1,\gamma\ge2$ and $\alpha+1>\beta$. Under Assumption \ref{ass}, if \[\max\{3,\gamma,\alpha+1\}<\frac{2}{d+2}(\alpha-1+\beta),\]  then for every $T>0$,  there exists a global nonnegative martingale solution \[\big(  (\tilde{\Omega} ,\tilde{\mathcal{F}}, \{\tilde{\mathcal{F}}_{t}\} _{t\ge 0}  ,\tilde{\mathbb{P}}  ), \tilde{u},\tilde{W}\big )\] on $[0,T]$
to the equation (\ref{equ1}) in the sense  of Definition \ref{def}. 
\end{thm}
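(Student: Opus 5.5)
The proof follows the route announced in the introduction: Galerkin approximation, uniform estimates, tightness, then Skorokhod and identification of the limit.

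\emph{Galerkin scheme.} On $H_n$ I would consider
\[
du_n=\Pi_n\big[\Delta u_n-\nabla\cdot(u_n\nabla c_n)+u_n^\alpha(1-\textstyle\int u_n^\beta)\big]dt+\Pi_n\sigma(u_n)\,dW_n ,
\]
where $c_n$ solves \eqref{equ3} with right-hand side $u_n$. The drift must first be made globally Lipschitz on the finite-dimensional space $H_n$, where all norms are equivalent. To this end I would replace $u_n$ by $u_n^+$ in the reaction term and multiply every nonlinearity by a smooth cut-off $\theta_N(\|u_n\|_{L^2})$. For these choices the classical SDE theory gives a unique global strong solution $u_n^N$. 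Assumption \ref{ass}, through \eqref{ass4}, supplies the local Lipschitz bound for the noise.

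\emph{Nonnegativity.} I would apply Itô's formula to $\|u_n^-\|_{L^2}^2$, or rather to a smooth convex approximation of it, since $u_n^-\notin H_n$ in general and this must be handled carefully. Here \eqref{ass5} shows that the noise acting on $\{u<0\}$ is $\sigma(u\wedge0)$, which is controlled by $\|u^-\|$. The reaction term vanishes on that set, and the chemotaxis term is bounded by \eqref{d}. Gronwall's inequality together with $u_n(0)\ge0$ then gives $u_n^-\equiv0$. Pathwise uniqueness allows the solutions for different $N$ to be patched up to $\tau_n^N$, which produces a maximal solution.

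\emph{Uniform estimate (main obstacle).} Itô's formula for $\|u_n\|_{L^2}^2$ gives
\[
\tfrac12 d\|u_n\|^2+\|\nabla u_n\|^2dt=\Big[\tfrac12\!\int u_n^2 u_n\,dx-\tfrac12\!\int u_n^2c_n\,dx+\!\int u_n^{\alpha+1}-\!\int u_n^{\alpha+1}\!\int u_n^\beta\Big]dt+\tfrac12\|\Pi_n\sigma\|^2_{L_2}dt+dM .
\]
The chemotaxis term reduces to $\int u^3$ after integrating by parts and using $-\Delta c+c=u$. Nonnegativity, which is why it was needed, gives the coercive term $\int u^{\alpha+1}\int u^\beta$. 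By H\"older and Jensen this dominates $\|u\|_{L^{k}}^{k}$ for $k$ up to about $\frac{2}{d+2}(\alpha-1+\beta)+2$ once Lemma \ref{lem1} is interpolated with $\|\nabla u\|^2$. The bad terms are $\int u^3$, $\int u^{\alpha+1}$ and $C\|u\|_\gamma^\gamma$ from \eqref{ass3}. Under the hypothesis $\max\{3,\gamma,\alpha+1\}<\frac{2}{d+2}(\alpha-1+\beta)$ they are subordinate: I would apply Lemma \ref{lem1} with small $C_0,C_1$ and absorb them into $\|\nabla u\|^2$ and the nonlocal damping, using Young's inequality. The condition $\alpha+1>\beta$ is used in the exponent bookkeeping.

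Taking expectations, and using BDG \eqref{BDG} for the $\sup$ and higher moments, gives bounds uniform in $n$ and $N$ for $\mathbb E\sup_t\|u_n\|^2$, $\mathbb E\int\|\nabla u_n\|^2$ and $\mathbb E\int\|u_n\|_k^k$. These bounds yield $\tau^*=T$ a.s. The delicate point is choosing exponents so that every bad term sits strictly below the coercive power; this is the step I expect to be hardest.

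\emph{Compactness and passage to the limit.} From the equation I would derive a time-increment estimate in a dual space $\mathbb U'$ (with $\mathbb U$ a high-order Sobolev space) via Aldous' condition. The appendix tightness criterion then gives tightness of the laws of $(u_n,W_n)$ in $\mathcal Z\times C([0,T];U)$. Jakubowski–Skorokhod provides new variables $(\tilde u_n,\tilde W_n)\to(\tilde u,\tilde W)$ a.s., with laws preserved.

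Strong convergence in $L^p(0,T;L^p)$ together with the $L^k$ bounds ($k>\max\{3,\alpha+1,\gamma\}$) gives, by Vitali's theorem, convergence of $u\nabla c$, $u^\alpha$, $\int u^\beta$ and $\sigma(\tilde u_n)$; for the last I use \eqref{ass4}. Convergence of the stochastic integrals follows from the standard lemma for $\tilde W_n\to\tilde W$ in probability. The remaining conclusions, namely nonnegativity of $\tilde u$, the regularity in (iii) by lower semicontinuity, and $\mathcal L(\tilde u(0))=\mathcal L(u_0)$, then finish the proof.
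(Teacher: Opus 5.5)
Your outline follows the paper's proof step for step. However, the nonnegativity step does not work as you describe it. The paper's own version of it (proof of Theorem~\ref{thm3.1}) has the same defect, so it cannot simply be borrowed.

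\textbf{Why the nonnegativity step fails.} In the Galerkin equation the nonlinear drift and the noise enter as $\Pi_n G(u_n)$ and $\Pi_n\sigma(u_n)$, and $\Pi_n$ destroys every localization you invoke. First, $(u_n^-,\Pi_n G(u_n))=(\Pi_n u_n^-,G(u_n))$, and $\Pi_n u_n^-$ is not supported in $\{u_n<0\}$. So the reaction term does not vanish there, and the chemotaxis term does not reduce to an integral over $\{u_n<0\}$. Second, the It\^o correction is $\sum_l\int_{\{u_n<0\}}|\Pi_n[\sigma(u_n)Q^{1/2}\xi_l]|^2\,dx$, to which \eqref{ass5} does not apply. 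These terms are controlled only by $C\|u_n^-\|_{L^2}\|u_n\|_{H^1}$ and $C|\{u_n<0\}|$, not by $C\|u_n^-\|_{L^2}^2$, so Gronwall yields nothing.

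The paper makes the same move: it passes from $\|F(u_n^N)\|_{L^2}\le C\|u_n^N\|_{H^1}$ to $|(\rho(u_n^N),F(u_n^N))|\le C\|\rho\|\,\|\nabla\rho\|+C\|\rho\|^2$, which does not follow, and it drops $\Pi_n$ from the correction term.

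In fact the claimed positivity is false in general. Invariance of the cone $\{v\in H_n: v\ge0\}$ requires $\Pi_n[\sigma(v)Q^{1/2}\xi_l](x_0)=0$ for all $l$ whenever $v\ge0$, $v(x_0)=0$ and $\|v\|_{L^2}<N$. For the Example after Assumption~\ref{ass} this fails, because the spectral projection of $e_l\,g(v)$ need not vanish where $v$ does. This is not cosmetic: your $L^2$ estimate uses $u_n\ge0$, since the damping only sees $u_n^+$ while $\|u_n\|_{L^3}^3$ and $\|u_n\|_{L^\gamma}^\gamma$ see $|u_n|$.

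\textbf{A possible repair.} Make the scheme sign-blind and prove positivity only where no projection acts. Use $u_n^+$ in all nonlinearities, i.e.\ $\nabla\cdot(u_n^+\nabla c(u_n^+))$, $(u_n^+)^\alpha(1-\int_{\mathcal O}(u_n^+)^\beta dx)$ and $\sigma(u_n^+)$. Then:
\begin{itemize}
\item the chemotaxis term gives $-2(u_n,\Pi_n\nabla\cdot(u_n^+\nabla c(u_n^+)))=\int_{\mathcal O}(u_n^+)^2(u_n^+-c(u_n^+))\,dx\le\|u_n^+\|_{L^3}^3$;
\item the reaction pairing is $2\int_{\mathcal O}(u_n^+)^{\alpha+1}dx\,(1-\int_{\mathcal O}(u_n^+)^\beta dx)$;
\item the noise contributes $C\|u_n^+\|_{L^\gamma}^\gamma$.
\end{itemize}
The Claim applied to $u_n^+$ (note $\|\nabla u_n^+\|_{L^2}\le\|\nabla u_n\|_{L^2}$) then closes the estimate with no sign information.

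The limit solves the equation with $u^+$ in the nonlinearities. In $d\|\tilde u^-\|_{L^2}^2$ every term except $-2\|\nabla\tilde u^-\|_{L^2}^2dt$ vanishes, by disjoint supports and by \eqref{ass2} with $\sigma(0)=0$ for the noise. This gives $\tilde u\ge0$. That It\^o formula for the weak limit still has to be justified by an approximation argument, since the reaction term is only integrable. Alternatively, approximate by a projection-free truncated SPDE, where your $\|u^-\|^2$ argument is legitimate.

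\textbf{Two smaller points.} First, your listed bounds do not control the nonlocal product. Every $k<\frac{2}{d+2}(\alpha-1+\beta)+2$ lies below $\alpha+\beta$, because the hypotheses force $\alpha+\beta>\frac{d+4}{2}>2+\frac{2}{d}$. So Vitali and the Aldous estimate for $u^\alpha\int_{\mathcal O}u^\beta dx$ must use the damping bound \eqref{a3}, via $(\|u\|_{L^\alpha}^\alpha\|u\|_{L^\beta}^\beta)^p\le C\|u\|_{L^{\alpha+1}}^{\alpha+1}\|u\|_{L^\beta}^\beta$ with $p=\frac{\alpha+\beta+1}{\alpha+\beta}$. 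This is where $\beta<\alpha+1$ enters. The paper's choice $k_2\in(\alpha+\beta,\frac{2}{d+2}(\alpha-1+\beta)+2)$ is an empty interval.

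Second, read literally, $\alpha+1<\frac{2}{d+2}(\alpha-1+\beta)$ together with $\beta<\alpha+1$ gives $(d-2)\alpha+d+2<0$, so the stated hypothesis is void. The meaningful one is \eqref{condition} of Lemma~\ref{lem3}, and your exponent bookkeeping should be checked against that.
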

Actually, for a given initial data  $u_0\in L^2(\Omega,\mathcal{F}_0;L^2(\mathcal{O}))$ and $u_0\ge0 \,a.e.$ in $\mathcal{O}$, it has been mentioned in \cite{zhai20202d} that such approximation $u_n(0)$ in Theorem \ref{thm1} exists.
 \section{Well-posedness and uniform estimates of Galerkin approximate system } 

In this section, we first study the local well-posedness of the following classical Galerkin approximation in the space $H_n$,
\begin{eqnarray}\label{equ2}
	\begin{cases}	
		du_n  = \Pi_n\Big[\Delta u_n-  \nabla  \cdot (u_n \nabla c_n)+ u_n^\alpha(1-\int_\mathcal{O}u_n^{\beta}dx)\Big]~dt +\Pi_n\sigma (u_n)~dW(t),
		\\-\Delta c_n+c_n=u_n,\quad \text{in }\mathcal{O}
		\\\nabla u_n\cdot\nu =\nabla c_n\cdot\nu=0, \quad \text{on }\partial\mathcal{O}
	\end{cases} 
\end{eqnarray}
with initial $u_n(0)\in H_n$. 
Then, we show that this approximation sequence is uniformly bounded in certain norms, consequently we obtain the global well-posedness of \eqref{equ2}. At the same time, we obtain that these global bounds are preparations for the compactness argument in the next section. 

\subsection{Local Strong Solution}

We first prove that \eqref{equ2} has a unique local strong solution. For convenience, we repeat the definition of it.
\begin{defn}\label{def1}(Local strong  solution and uniqueness) Fix a  probability space ${\mathfrak{A}}:= ({\Omega} ,{\mathcal{F}}$, $ \{{\mathcal{F}}_{t}\} _{t\ge 0},  {\mathbb{P}}  )$ with a complete, right-continuous filtration and let $W(t)$ be a $Q$-Wiener process on ${\mathfrak{A}}$.  Let $u_0\in L^2(\Omega,\mathcal{F}_0;L^2(\mathcal{O}))$ and $u_0\ge 0\,a.e.$ in $\mathcal O$. A local strong solution to (\ref{equ2}) is  $u_n^N(\cdot \wedge{\tau_n^N})$, where $\tau_n^N$ is a stopping time  and $u_n^N:\Omega\times[0,\infty)\to H_n$ is an  $H_n$-valued  $\{\mathcal{F}_t\}_{t\ge 0}$-predictable process satisfying 
	\[
	u_n^N(\cdot\wedge\tau_n^N)\in C([0,\infty);H_n)~\mathbb{P}\text{-a.s.},
	\]
	and for all $t\ge0$,
	\begin{align*}
		u_n^N(t\wedge\tau_n^N )=&u_n(0)+\int_0^{t\wedge\tau_n^N}\Pi_n\big[\Delta u_n^N-  \nabla  \cdot (u_n^N \nabla c_n^N) +(u_n^N)^\alpha(1-\int_\mathcal{O}(u_n^{N})^{\beta}dx)\big]~dt' \\&+\int_0^{t\wedge\tau_n^N}\Pi_n\sigma (u_n^N)~dW(t')\quad\mathbb{P}\text{-a.s.}
	\end{align*} 
	The local solution is pathwise unique, if any two local solution $u_n^{N_1}(\cdot \wedge{\tau_n^{N_1}})$ and $u_n^{N_2}(\cdot \wedge{\tau_n^{N_2}})$ satisfy that $u_n^{N_1}(0)=u_n^{N_2}(0)$~$\mathbb{P}$-a.s. can imply $u_n^{N_1}(t)=u_n^{N_2}(t)$~$\mathbb{P}$-a.s. on $t\in[0,\tau_n^{N_1}\wedge\tau_n^{N_2})$.
\end{defn}

We prove the existence and uniqueness of local strong solution separately in the following two theorem.

\begin{thm}\label{thm3.1}
    Let $d\ge 3$ and  $\alpha> 1,\beta>1,\gamma\ge2$.   Under  Assumption \ref{ass}, for each $n\in\mathbb{N}$ and each $N>0$, the Galerkin equation (\ref{equ2}) exists a pathwise unique nonnegative local strong solution $u_n^N(\cdot\wedge{{\tau_n^N}})$  in the sense  of Definition \ref{def1}, where the stopping time ${{\tau_n^N}}:=\inf\{t\ge0:\|u_n^N(t)\|_{L^2(\mathcal{O})}\ge N\}$. Moreover, $u_n^N(\cdot\wedge{\tau_n^N})\in C([0,\infty);H_n),\,\mathbb{P}$-a.s. 
\end{thm}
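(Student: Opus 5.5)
We would follow the strategy announced in the introduction. We first replace \eqref{equ2} by a truncated SDE on $H_n\cong\mathbb{R}^n$ whose coefficients are globally Lipschitz, solve it globally, and then prove nonnegativity. Fix a smooth cut-off $\theta_N:[0,\infty)\to[0,1]$ with $\theta_N(r)=1$ for $r\le N$ and $\theta_N(r)=0$ for $r\ge N+1$, and consider
\begin{align*}
dv=\theta_N(\|v\|_{L^2(\mathcal O)})\,\Pi_n\Big[\Delta v-\nabla\cdot(v\nabla c_v)+(v^+)^\alpha\big(1-\int_{\mathcal O}(v^+)^\beta dx\big)\Big]dt+\theta_N(\|v\|_{L^2(\mathcal O)})\,\Pi_n\sigma(v)\,dW(t),
\end{align*}
with $v(0)=u_n(0)$, where $-\Delta c_v+c_v=v$.

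On $H_n$ all norms are equivalent, since the $e_l$ are smooth, so $\|v\|_{L^\gamma}+\|v\|_{H^2}\le C_n\|v\|_{L^2}$. We then check local Lipschitz continuity of each coefficient on the ball $\{\|v\|_{L^2}\le N+1\}$:
\begin{itemize}
\item the linear term $\Pi_n\Delta v$ is trivially Lipschitz;
\item for the chemotaxis term we test against $e_l$, integrate by parts, and use \eqref{d} with $p>d$ together with $\|\nabla e_l\|_{L^\infty}\le C_n$;
\item for the reaction term we use that $r\mapsto (r^+)^\alpha$ is $C^1$ for $\alpha>1$ and that $v\mapsto\int(v^+)^\beta$ is locally Lipschitz in $L^\infty$;
\item for the noise we use \eqref{ass4} and the contractivity \eqref{3.2}.
\end{itemize}
Multiplying by $\theta_N$ turns these into globally Lipschitz, bounded coefficients. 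Standard finite-dimensional SDE theory with Lipschitz coefficients then gives a pathwise unique global strong solution $u_n^N\in C([0,\infty);H_n)$ a.s. On $[0,\tau_n^N]$ we have $\theta_N=1$, so $u_n^N(\cdot\wedge\tau_n^N)$ solves \eqref{equ2}, provided we also show $u_n^N\ge0$ (then $(u^+)=u$). Pathwise uniqueness of local solutions follows from the same Lipschitz bounds: for two solutions we take the difference, stop at $\tau^{N_1}\wedge\tau^{N_2}$, apply Itô's formula to the squared $L^2$ norm with BDG \eqref{BDG}, and close with Gronwall.

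The main obstacle is nonnegativity. The Galerkin projection $\Pi_n$ does not preserve signs, so the usual Stampacchia argument on $\|u^-\|^2$ does not apply verbatim. We would apply Itô's formula to $\Phi(u)=\int_{\mathcal O}(u^-)^2dx$, or to a smooth convex approximation $\Phi_\varepsilon$. Since $\Phi'(u)=-2u^-$ is not in $H_n$, the projection interacts with it. We would use self-adjointness \eqref{3.1} and argue as in \cite{zhai20202d}, which the paper cites for this scheme.
\begin{itemize}
\item The reaction term vanishes on $\{u<0\}$ because of the $u^+$ cut.
\item The diffusion term contributes $-\int|\nabla u^-|^2\le0$.
\item The chemotaxis term gives $\int u^-\nabla c\cdot\nabla u^-$, which is bounded by $\varepsilon\|\nabla u^-\|^2+C_N\|u^-\|^2$ via \eqref{d}.
\item The Itô correction involves only $\mathbf 1_{\{u<0\}}\sigma(u)=\sigma(u\wedge0)$ by \eqref{ass5}, and \eqref{ass3} with the norm equivalence bounds it by $C_{n,N}\|u^-\|^2$.
\end{itemize}

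The delicate point is controlling the projection defect $\Pi_n$ versus the identity in these pairings. If that cannot be done exactly, we would fall back on the finite-dimensional positivity argument used in the cited works. Once the defect is controlled, taking expectations (the martingale term has zero mean after localization) and applying Gronwall with $u_n(0)\ge0$ give $\mathbb E\|u^-(t)\|^2=0$. This yields nonnegativity and completes the plan.
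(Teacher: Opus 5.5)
Your construction of the truncated solution and your uniqueness argument are sound; the uniqueness argument is exactly the paper's Theorem~\ref{thm3.2}. Cutting off every coefficient, including $\Pi_n\Delta v$, so that all of them become globally Lipschitz on $H_n$ is a legitimate and slightly simpler alternative to the paper's route. The paper cuts off only the chemotaxis, reaction and noise terms with $\theta(r/N)$, and then checks local weak monotonicity and weak coercivity in order to apply Theorem 3.1.1 of \cite{liu2015stochastic}. In both versions the truncated solution solves \eqref{equ2} on $[0,\tau_n^N]$ only once $(u^+)^\alpha=u^\alpha$ is known.

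The genuine gap is nonnegativity, and it is not a technicality that sharper estimates will fix.
\begin{itemize}
\item Your bullet list computes as if $\Pi_n$ were absent. Only the diffusion term is exact, because $\Pi_n\Delta u=\Delta u$ for $u\in H_n$.
\item By contrast, $\Pi_n[(u^+)^\alpha(\cdots)]$ does not vanish on $\{u<0\}$, and the chemotaxis pairing is not $\int u^-\nabla c\cdot\nabla u^-$.
\item The It\^o correction is $\sum_l\|\mathbf 1_{\{u<0\}}\Pi_n\sigma(u)(Q^{1/2}\xi_l)\|_{L^2(\mathcal O)}^2$, to which \eqref{ass5} does not apply.
\item The projected drift pairings can only be bounded by $C_{n,N}\|u^-\|_{L^2(\mathcal O)}$, and the correction only by $C_{n,N}|\{u<0\}|$. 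Neither is bounded by $C\|u^-\|_{L^2(\mathcal O)}^2$, so Gronwall cannot force $\mathbb E\|u^-(t)\|_{L^2(\mathcal O)}^2=0$.
\end{itemize}
In fact the cone $\{v\in H_n: v\ge0\}$ is in general not invariant. Suppose $u\in H_n$, $u\ge0$ and $u(x_0)=0$. For the paper's example noise, the coefficient of $dW_l$ in the equation for $u(t,x_0)$ is $\sqrt{\lambda_l}\,\Pi_n[e_l\,g(u)](x_0)$. This typically does not vanish, even though $e_l\,g(u)$ does. So for such initial data $u_n^N(t,x_0)$ becomes negative with positive probability for small $t>0$.

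There is therefore no finite-dimensional positivity argument to fall back on, and the paper's proof does not supply one either. It writes the It\^o identity for $\|\rho(u_n^N)\|^2$ as in \cite{tang2024strong}, with $\|\sigma(-\rho(u_n^N))\|^2$ in place of the projected correction. It then passes from $\|F\|\le C(\|\nabla u_n^N\|+\|u_n^N\|)$ to $|(\rho,F)|\le C\|\rho\|\,\|\nabla\rho\|+C\|\rho\|^2$, which is unjustified. Closing this requires changing the approach. One option is to write the nonlinearities in the scheme so that the a priori bounds need no sign. One would then establish nonnegativity only for the limit problem, where no projection interferes with \eqref{ass5}.
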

\begin{proof}
Let $\theta:[0,\infty)\to[0,1]$ be a $C^2$ function such that $\theta(r)=1$ for $r\in [0,1]$, $\theta(r)=0$ for $r\ge2$ and $\sup_{r\in[0,\infty)}|\theta'(r)|\leq C_\theta<\infty$. For any $N>0$, let $\theta_N(r)=\theta(\frac{r}{N})$. Let $r^{+}=\max\{0,r\}$ denote the positive part. For each $u_n\in H_n$ , let $c_{n}(u_n)$ be a unique solution of the second equation (\ref{equ2}). Since $H_n$ is finite dimensional, for 
 $p\in[1,\infty]$, $L^p(\mathcal{O})$ norms are equivalent on $H_n$, there exists $C(n,p)$ such that 
 \begin{align}\label{n2}
     \|v\|_{L^p(\mathcal{O})}\leq C(n,p)\|v\|_{L^2(\mathcal{O})},\quad \forall v\in H_n.
 \end{align}
 Moreover, by (\ref{d}), (\ref{n2}), we obtain that for $p>d$,
 \begin{align}\label{n3}
     \|\nabla c_n(v)\|_{L^\infty(\mathcal{O})}+\|c_n(v)\|_{L^\infty(\mathcal{O})}\leq \|v\|_{L^p(\mathcal{O})}\leq C(n,p)\|v\|_{L^2(\mathcal{O})},\quad \forall v\in H_n.
 \end{align}
 Next, we consider the following truncated  system of \eqref{equ2} 
 \begin{align}\label{equ4}
     du_{n}^{N}
			=a(u_n^N)dt+b(u_n^N)dW(t),
 \end{align}
where the function $a:H_n\to H_n$ is defined by 
\begin{align*}
     a(u_n^N)= \Pi_n\Big[&\Delta u_{n}^{N}- \theta_N(\|u_{n}^{N}\|_{L^2(\mathcal{O})})\nabla\cdot\big(u_{n}^{N}\nabla c_n(u_{n}^{N})\big)\\
			&+\theta_N(\|u_{n}^{N}\|_{L^2(\mathcal{O})})(u_{n}^{N+})^\alpha\Big(1-\int_{\mathcal O}(u_{n}^{N+})^\beta dx\Big)
			\Big],
\end{align*}
 the function $b:H_n\to L_2(U_0,H_n)$ is defined by \[b(u_n^N)=\Pi_n\Big[\theta_N(\|u_{n}^{N}\|_{L^2(\mathcal{O})})\sigma(u_{n}^{N})\Big].\] 
 In fact, if $\theta_N(\|u_n^N\|_{L^2(\mathcal{O})})\neq0$, then $\|u_n^N\|_{L^2(\mathcal{O})}\le 2N$. 
 If $\theta_N(\|u_n^N\|_{L^2(\mathcal{O})})=0$, the chemotactic  term, the logistic term and the noise term vanish. Then, we will use the well-known theory of stochastic differential equations (\cite{liu2015stochastic}, Theorem 3.1.1) to prove the existence of the global solution on the truncated equation \eqref{equ4}.
 
  Fix $N>0$,
 let  $u_n^N,v_n^N\in H_n$ and $\|u_n^N\|_{L^2(\mathcal{O})},\|v_n^N\|_{L^2(\mathcal{O})}\leq 2N$. We prove $a,\,b$ satisfy the  local weak monotonicity condition. By \eqref{3.1}, we obtain  
 \begin{align*}
 &\big(a(u_n^N)-a(v_n^N),{u_n^N}-{v_n^N}\big)_{H_n}\\=&-\int_\mathcal{O}\nabla ({u_n^N}-{v_n^N})\cdot\nabla({u_n^N}-{v_n^N})dx\\&+ \int_\mathcal{O}\big(\theta_N(\|{u_n^N}\|_{L^2(\mathcal{O})}){u_n^N}\nabla c_{n}({u_n^N})-\theta_N(\|{v_n^N}\|_{L^2(\mathcal{O})}){v_n^N}\nabla c_{n}({v_n^N})\big)\cdot\nabla({u_n^N}-{v_n^N})dx
 \\&+\int_\mathcal{O}\big(\theta_N(\|{u_n^N}\|_{L^2(\mathcal{O})}) ({u_n^{N+}})^\alpha(1-\int_\mathcal{O}({u_n^{N+}})^{\beta}dx)\big)({u_n^N}-{v_n^N})dx\\&- \int_\mathcal{O}\big(\theta_N(\|{v_n^N}\|_{L^2(\mathcal{O})})({v_n^{N+}})^\alpha(1-\int_\mathcal{O}({v_n^{N+}})^{\beta}dx)\big)({u_n^N}-{v_n^N})dx\\
 \leq&-\int_\mathcal{O}|\nabla ({u_n^N}-{v_n^N})|^2dx+  \Big|\int_\mathcal{O}\theta_N(\|{u_n^N}\|_{L^2(\mathcal{O})})({u_n^N}-{v_n^N})\nabla c_{n}({u_n^N})\cdot\nabla({u_n^N}-{v_n^N})dx\Big|\\
 &+  \Big|\int_\mathcal{O}\theta_N(\|{u_n^N}\|_{L^2(\mathcal{O})}){v_n^N}\big(\nabla c_{n}({u_n^N})-\nabla c_{n}({v_n^N})\big)\cdot\nabla({u_n^N}-{v_n^N})dx\Big|\\&+ \Big|\int_\mathcal{O}\big(\theta_N(\|{u_n^N}\|_{L^2(\mathcal{O})})-\theta_N(\|{v_n^N}\|_{L^2(\mathcal{O})})\big){v_n^N}\nabla c_{n}({v_n^N})\cdot\nabla({u_n^N}-{v_n^N})dx\Big|\\&+\Big|\int_\mathcal{O}\big(\theta_N(\|{u_n^N}\|_{L^2(\mathcal{O})})({u_n^{N+}})^\alpha-\theta_N(\|{v_n^N}\|_{L^2(\mathcal{O})})({v_n^{N+}})^\alpha\big)(1-\int_\mathcal{O}({u_n^{N+}})^\beta dx)({u_n^N}-{v_n^N})dx\Big|\\&+\Big|\int_\mathcal{O}\big(({u_n^{N+}})^\beta-({v_n^{N+}})^\beta\big)dx\int_\mathcal{O}\theta_N(\|{v_n^N}\|_{L^2(\mathcal{O})})({v_n^{N+}})^\alpha({u_n^N}-{v_n^N})dx\Big|\\=&:-\|\nabla ({u_n^N}-{v_n^N})\|_{L^2(\mathcal{O})}^2+\sum_{i=1}^{5}I_i.
 \end{align*}
 	For $I_1$, by the H\"older inequality,  \eqref{n3} and Young's inequality,
	\begin{align*}
		|I_1|
		&\le  \,\|{u_n^N}-{v_n^N}\|_{L^2(\mathcal{O})}\,\|\nabla c_{n}(u_n^N)\|_{L^\infty(\mathcal{O})}\,\|\nabla({u_n^N}-{v_n^N})\|_{L^2(\mathcal{O})}\\
		&\le \frac14\|\nabla({u_n^N}-{v_n^N})\|_{L^2(\mathcal{O})}^2 + C( n,N)\|{u_n^N}-{v_n^N}\|_{L^2(\mathcal{O})}^2.
	\end{align*}
		Similarly, using  \eqref{n3}
	\begin{align*}
		| I_2|
		&\le  \,\|{v_n^N}\|_{L^2(\mathcal{O})}\,\|\nabla\big(c_{n}({u_n^N})-c_{n}({v_n^N})\big)\|_{L^\infty(\mathcal{O})}\|\nabla({u_n^N}-{v_n^N})\|_{L^2(\mathcal{O})}\\
		&\le \frac14\|\nabla({u_n^N}-{v_n^N})\|_{L^2(\mathcal{O})}^2 + C( n,N)\|{u_n^N}-{v_n^N}\|_{L^2(\mathcal{O})}^2.
	\end{align*}
	For $I_3$, since $\theta_N$ is Lipschitz and
	$\big|\|{u_n^N}\|_{L^2(\mathcal{O})}-\|{v_n^N}\|_{L^2(\mathcal{O})}\big|\le \|{u_n^N}-{v_n^N}\|_{L^2(\mathcal{O})}$, we have
	\begin{align}\label{n7}
	    \Big|\theta_N(\|{u_n^N}\|_{L^2(\mathcal{O})})-\theta_N(\|{v_n^N}\|_{L^2(\mathcal{O})})\Big|
	\le \frac{C_\theta}{N}\|{u_n^N}-{v_n^N}\|_{L^2(\mathcal{O})}.
	\end{align}
	Hence, using \eqref{n2}, \eqref{n3} and Young's inequality yields
	\begin{align*}
		|I_3|
		&\le C(  n,N)\|{u_n^N}-{v_n^N}\|_{L^2(\mathcal{O})}\,\|{v_n^N}\|_{L^2(\mathcal{O})}\,\|\nabla c_{n}({v_n^N})\|_{L^\infty(\mathcal{O})}\,\|\nabla({u_n^N}-{v_n^N})\|_{L^2(\mathcal{O})}\\
		&\le \frac14\|\nabla({u_n^N}-{v_n^N})\|_{L^2(\mathcal{O})}^2 + C( n,N)\|{u_n^N}-{v_n^N}\|_{L^2(\mathcal{O})}^2.
	\end{align*}
    By the fact that the maps
	$s\mapsto s^a$ is locally Lipschitz, it holds
    \[ |s_1^a-s_2^a|\leq a(s_1^{a-1}+s_2^{a-1})|s_1-s_2|,\quad \forall s_1,s_2\ge0,a>1.\]
    Since $\|{u_n^{N+}}\|_{L^\infty(\mathcal{O})}+\|{v_n^{N+}}\|_{L^\infty(\mathcal{O})}\le C(n,N)$, we have 
    \begin{align}\label{n5}
        \|({u_n^{N+}})^\alpha-({v_n^{N+}})^\alpha\|_{L^2(\mathcal{O})}\le C(\alpha,n,N)\|{u_n^{N+}}-{v_n^{N+}}\|_{L^2(\mathcal{O})},\\\label{n6}
	\|({u_n^{N+}})^\beta-({v_n^{N+}})^\beta\|_{L^2(\mathcal{O})}\le C(\beta,n,N)\|{u_n^{N+}}-{v_n^{N+}}\|_{L^2(\mathcal{O})}.
    \end{align}
    For $I_4,$ by \eqref{n2},  we use the Lipschitz property of $\theta_N(\|\cdot\|_{L^2(\mathcal {O})})$ and the above estimates \eqref{n5} to infer that 
\begin{align*}
    |I_4|\leq&\big|\theta_N(\|{u_n^N}\|_{L^2(\mathcal{O})})\big|\|({u_n^{N+}})^\alpha-({v_n^{N+}})^\alpha\|_{L^2(\mathcal{O})}(1+\|{u_n^{N+}}\|_{L^\beta(\mathcal{O})}^\beta)\|{u_n^{N}}-{v_n^{N}}\|_{L^2(\mathcal{O})} \\&+\big|\theta_N(\|{u_n^N}\|_{L^2(\mathcal{O})})-\theta_N(\|{v_n^N}\|_{L^2(\mathcal{O})})\big|\|{v_n^{N+}}\|_{L^{2\alpha}(\mathcal{O})}^\alpha(1+\|{u_n^{N+}}\|_{L^\beta(\mathcal{O})}^\beta)\|{u_n^{N}}-{v_n^{N}}\|_{L^2(\mathcal{O})} 
    \\\leq& C(\alpha,\beta,n,N)\|{u_n^{N}}-{v_n^{N}}\|_{L^2(\mathcal{O})}^2.
\end{align*}
For $I_5$,  by the embedding and \eqref{n6}, we know
\begin{align*}
    |I_5|\leq&\big|\theta_N(\|{u_n^N}\|_{L^2(\mathcal{O})})\big|\|({u_n^{N+}})^\beta-({v_n^{N+}})^\beta\|_{L^1(\mathcal{O})}\|{u_n^{N+}}\|_{L^{2\alpha}(\mathcal{O})}^\alpha\|{u_n^{N}}-{v_n^{N}}\|_{L^2(\mathcal{O})}\\\leq&C(\alpha,n,N)\big|\|({u_n^{N+}})^\beta-({v_n^{N+}})^\beta\|_{L^2(\mathcal{O})}\|{u_n^{N}}-{v_n^{N}}\|_{L^2(\mathcal{O})}\\\leq &C(\alpha,\beta,n,N)\|{u_n^{N}}-{v_n^{N}}\|_{L^2(\mathcal{O})}^2.
\end{align*}
 Hence, combining the bounds for $I_i,\,1\leq i\leq 5$, we get
	\[
	\big(a({u_n^N})-a({v_n^N}),{u_n^N}-{v_n^N}\big)_{H_n}
	\le  C( \alpha,\beta,n,N)\|{u_n^N}-{v_n^N}\|_{H_n}^2.
	\]
	Together with Assumption  \ref{ass} on $\sigma$, by (\ref{3.2}), (\ref{n2}) and \eqref{n7}, we have 
    \begin{align*}
      &\|b({u_n^N})-b({v_n^N})\|_{L_2(U_0,H_n)}^2\\\leq
      & \|\theta_N(\|{u_n^N}\|_{L^2(\mathcal{O})})\sigma({u_n^N})-\theta_N(\|{v_n^N}\|_{L^2(\mathcal{O})})\sigma({v_n^N})\|_{L_2(U_0,L^2(\mathcal{O}))}^2\\\le&2\big|\theta_N(\|{u_n^N}\|_{L^2(\mathcal{O})})-\theta_N(\|{v_n^N}\|_{L^2(\mathcal{O})})\big|^2\|\sigma({v_n^N})\|_{L_2(U_0,L^2(\mathcal{O}))}^2\\&+2\big|\theta_N(\|{u_n^N}\|_{L^2(\mathcal{O})})\big|^2\|\sigma({u_n^N})-\sigma({v_n^N})\|_{L_2(U_0,L^2(\mathcal{O}))}^2\\\le &C(n,\gamma,N)\|{u_n^N}-v_n^N\|_{L^2(\mathcal{O})}^2=C(n,\gamma,N)\|{u_n^N}-v_n^N\|_{H_n}^2.  
    \end{align*}
	Therefore, combining with the above two estimates, the local weak monotonicity condition holds.
 Next,  we verify the weak coercivity condition  for $a,\,b$.  By  (\ref{ass3}), (\ref{n2}) and the definition of $\theta_N$, we have 
 \begin{align*}
    & \big(a({u_n^N}),{u_n^N}\big)_{H_n}+\|b({u_n^N})\|_{L_2{(U_0,H_n)}}^2\\
    \le&-\int_\mathcal{O}|\nabla {u_n^N}|^2dx+  \int_\mathcal{O}\theta_N(\|{u_n^N}\|_{L^2(\mathcal{O})}){u_n^N} \nabla c_n(u_n^N)\cdot\nabla {u_n^N}dx\\&+\int_\mathcal{O}\theta_N(\|{u_n^N}\|_{L^2(\mathcal{O})})|{u_n^{N}}|^{\alpha+1}(1+\int_\mathcal{O}|{u_n^{N}}|^{\beta}dx)dx+|\theta_N(\|{u_n^N}\|_{L^2(\mathcal{O})})|^2\|\sigma ({u_n^N})\|_{L_2{(U_0,L^2(\mathcal{O}))}}^2\\
    \leq &-\|\nabla {u_n^N}\|_{L^2(\mathcal{O})}^2+ \|{u_n^N}\|_{L^2(\mathcal{O})}\|\nabla c_n(u_n^N)\|_{L^\infty(\mathcal{O})}\|\nabla {u_n^N}\|_{L^2(\mathcal{O})}\\&+\|{u_n^{N}}\|_{L^{\alpha+1}(\mathcal{O})}^{\alpha+1}+\|{u_n^N}\|_{L^{\alpha+1}(\mathcal{O})}^{\alpha+1}\|{u_n^N}\|_{L^\beta(\mathcal{O})}^\beta+C\|{u_n^N}\|_{L^\gamma(\mathcal{O})}^\gamma\\
    \leq& -\frac{1}{2}\|\nabla {u_n^N}\|_{L^2(\mathcal{O})}^2+C( n,N)\|{u_n^N}\|_{L^2(\mathcal{O})}^2+C(\alpha
    ,\beta,\gamma,n,N)\\
    \leq& C(\alpha,\beta,\gamma,n,N)(1+\|{u_n^N}\|_{L^2(\mathcal{O})}^2)=C(\alpha,\beta,\gamma,n,N)(1+\|{u_n^N}\|_{H_n}^2).
 \end{align*}
 This verifies the  weak coercivity condition. 	Therefore, by the classical finite-dimensional SDE theory for the truncated equation (\ref{equ4}), 
  for each $N>0$, there exists a unique global strong solution $u_n^N$ which is $\{\mathcal{F}_t\}_{t\ge 0}$-adapted process  such that
	\[
	u_{n}^{N}\in C([0,\infty);H_n)\quad \mathbb P\text{-a.s.}
	\]
Next, we show that if $u_n(0)\ge 0$ a.e. in $\mathcal{O}$, then  $\mathbb{P}( u_n^N(t)\ge 0\,a.e.\text{in}\,\mathcal{O},\,\forall t\in[0,\infty) )=1$. 
The proof of nonnegative  is analogous (\cite{tang2024strong}, Theorem 2.1). Let \begin{align*}
           F(u_n^N):=\Pi_n\Big[&- \theta_N(\|u_{n}^{N}\|_{L^2(\mathcal{O})})\nabla\cdot\big(u_{n}^{N}\nabla c_n(u_{n}^{N})\big)\\
			&+\theta_N(\|u_{n}^{N}\|_{L^2(\mathcal{O})})(u_{n}^{N+})^\alpha\Big(1-\int_{\mathcal O}(u_{n}^{N+})^\beta dx\Big)
			\Big].
       \end{align*}
       Observe that $\theta_N(\|u_n^N\|_{L^2(\mathcal{O})})\neq0$ implies $\|u_n^N\|_{L^2(\mathcal{O})}\le 2N$ and 
$\theta_N(\|u_n^N\|_{L^2(\mathcal{O})})=0$ implies $F(u_n^N)=0$ and $b(u_n^N)=0$. Let $\rho(u_n^N):=-\mathbf{1}_{\{{u_n^N}<0\}}{u_n^N}$. Similar to the estimate of $\| \rho(u_n^N(t))\|_{L^2(\mathcal O)}^2$, namely applying   It\^o's formula  to a regularization of $\rho(u_n^N(t))$ and then passing to the limit,
(see \cite{tang2024strong}), we obtain
\begin{align*}
	&\mathbb{E}\big(\|\rho({u_n^N}(t))\|_{L^2(\mathcal{O})}^2\big)-\mathbb{E}\big(\|\rho({u_n}(0))\|_{L^2(\mathcal{O})}^2\big)\\
	=&-2\mathbb E\Big(\int_0^t\|\nabla(-\rho(u_n^N(s)))\|_{L^2(\mathcal{O})}^2ds\Big)
	+2\mathbb E\Big(\int_0^t(\rho(u_n^N(s)),F(u_n^N(s)))_{L^2(\mathcal{O})}ds\Big) \\
	&+\mathbb{E}\Big(\int_0^{t}\|\sigma(-\rho({u_n^N}(s)))\|_{L_2(U_0,{L^2(\mathcal{O})})}^2ds\Big),
\end{align*}
    where we use the fact that $\nabla(\mathbf{1}_{\{{u_n^N}<0\}}{u_n^N})=\nabla {u_n^N}$ on $\{{u_n^N}<0\}$ and  by Assumption \ref{ass} and  \eqref{ass5}, we know $\mathbf{1}_{\{{u_n^N}<0\}}\sigma(u_n^N)=\sigma(-\rho(u_n^N)).$ 
     Since $u_n^N(t)\in H_n$, by the properties of orthogonal projection operators of \eqref{3.2} and the estimate \eqref{n2}, we infer that 
       \begin{align*}
           \nonumber\|F(u_n^N)\|_{L^2(\mathcal{O})}&\leq \| \nabla  \cdot (u_n^N \nabla c_n(u_n^N))+(u_n^{N+})^\alpha(1-\int_\mathcal{O}(u_n^{N+})^{\beta}dx)\|_{L^2(\mathcal{O})}\\\nonumber&\leq \|\nabla u_n^N\nabla c_n(u_n^N)+u_n^N\Delta
           c_n(u_n^N)\|_{L^2(\mathcal{O})}+\|u_n^{N+}\|_{L^{2\alpha}(\mathcal{O})}^\alpha(1+\|u_n^{N+}\|_{L^\beta(\mathcal{O})}^\beta)\\&\leq C(  n,N)\|\nabla u_n^N\|_{L^2(\mathcal{O})}+C( n,N)\| u_n^N\|_{L^2(\mathcal{O})}+C(\alpha,\beta,n,N)\| u_n^N\|_{L^2(\mathcal{O})}.
       \end{align*}
          Consequently, we have 
       \begin{align*}
         &\big |\big(\rho({u_n^N}),F({u_n^N})\big)_{L^2(\mathcal{O})}\big|\\\leq& C( n,N) \|\rho({u_n^N})\|_{L^2(\mathcal{O})}\|\nabla(-\rho({u_n^N}))\|_{L^2(\mathcal{O})}+C( \alpha,\beta,n,N) \|\rho({u_n^N})\|_{L^2(\mathcal{O})}^2\\ \leq & C( \alpha,\beta,n,N) \|\rho({u_n^N})\|_{L^2(\mathcal{O})}^2+\|\nabla(-\rho({u_n^N}))\|_{L^2(\mathcal{O})}^2.
       \end{align*}
       
       Therefore, by Assumption \ref{ass}, we have 
       \begin{align*}
           &\mathbb{E}\big(\|\rho({u_n^N}(t))\|_{L^2(\mathcal{O})}^2\big)-\mathbb{E}\big(\|\rho({u_n}(0))\|_{L^2(\mathcal{O})}^2\big)\\=&-\mathbb{E}\Big(\int_0^{t}\|\nabla(-\rho({u_n^N}(s)))\|_{L^2(\mathcal{O})}^2ds\Big)+\mathbb{E}\Big(\int_0^{t}\|\sigma(-\rho({u_n^N}(s)))\|_{L_2(U_0,{L^2(\mathcal{O})}}^2ds\Big)\\&+C( \alpha,\beta,n,N) \mathbb{E}\Big(\int_0^{t}\|\rho({u_n^N}(s))\|_{L^2(\mathcal{O})}^2ds\Big)\\\leq  &C( \alpha,\beta,n,N)\mathbb{E}\Big(\int_0^{t}\|\rho({u_n^N}(s))\|_{L^2(\mathcal{O})}^2 ds\Big).
       \end{align*}
       Since $\rho({u_n^N}(t))=0\, a.e.$ in $\mathcal{O}$, using the Gronwall lemma, for any $t\in[0,\infty
       )$, we get \begin{align*}
      \mathbb{E}\big(\|\rho({u_n^N}(t))\|_{L^2(\mathcal{O})}^2\big)=0.
       \end{align*}
       Hence $\rho({u_n^N}(t))=0$ in $\mathcal O$, $\mathbb P$-a.s., for each $t\in[0,\infty)$. Since $u_n^N$ is path continuous,  it implies $\mathbb{P}( u_n^N(t)\ge 0\,a.e.\text{in}\,\mathcal{O},\,\forall t\in[0,\infty) )=1$.
       This completes the proof of nonnegativity.
       
Define the stopping time
\[
\tau_n^{N}:=\inf\{t\ge0:\|u_n^N(t)\|_{L^2(\mathcal{O})}\ge N\}.
\]
  By the definition of $\theta_N$,
we have $\theta_N(\|u_n^N(t)\|_{L^2(\mathcal {O})})\equiv1$ on 
$[0,\tau_n^{N}]$. Moreover, by the  nonnegativity of $u_n^N$, for $t\in[0,\tau_n^N]$, we have 
\[(u_n^{N+}(t))^\alpha=(u_n^{N}(t))^\alpha,\quad(u_n^{N+}(t))^\beta=(u_n^{N}(t))^\beta, \,\mathbb{P}\text{-a.s.}\]
Consequently, for any  $t\in[0,\infty)$, the equation (\ref{equ2}) exists a the unique nonnegative local strong solution  $u_n^N(t\wedge\tau_{N})$ which is  $\{\mathcal{F}_t\}_{t\ge 0}$-adapted process and  $u_n^N(\cdot\wedge\tau_n^N)\in C([0,\infty);H_n),\,\mathbb{P}$-a.s.
This finishes the proof of Theorem \ref{thm3.1}.
\end{proof}
Next, we prove the pathwise uniqueness of  local strong solution.
\begin{thm}\label{thm3.2}
       Under  Assumption \ref{ass}, let  $u_n^{N_1}(\cdot\wedge\tau_n^{N_1})$ and $u_n^{N_2}(\cdot\wedge\tau_n^{N_2})$ be two local strong solutions to (\ref{equ2}) in the sense of Definition \ref{def1}  defined on the same stochastic basis. If  $\mathbb{P}\{u_n^{N_1}(0)=u_n^{N_2}(0)\}=1$ and let $\tau=\tau_n^{N_1}\wedge\tau_n^{N_2}$, then  we have that for all $t\in[0,\tau)$,
       \[
       u_n^{N_1}(t)=u_n^{N_2}(t),\quad\mathbb{P}\text{-a.s.}
       \]
   \end{thm}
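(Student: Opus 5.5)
Let $w:=u_n^{N_1}-u_n^{N_2}$. The plan is a Gronwall argument for $\mathbb E\|w(t\wedge\tau)\|_{L^2(\mathcal O)}^2$, using the local Lipschitz bounds already obtained in the proof of Theorem \ref{thm3.1}.

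First, on $[0,\tau]$ both solutions satisfy the untruncated equation \eqref{equ2}. They are nonnegative and satisfy $\|u_n^{N_i}\|_{L^2(\mathcal O)}\le N_i\le M:=\max\{N_1,N_2\}$. Hence on this interval both solve the truncated system \eqref{equ4} with cut-off level $M$, because $\theta_M\equiv 1$ there and $u^+=u$. Subtracting the two integral equations gives, for $t\ge0$,
\[
w(t\wedge\tau)=\int_0^{t\wedge\tau}\big(a(u_n^{N_1})-a(u_n^{N_2})\big)ds+\int_0^{t\wedge\tau}\big(b(u_n^{N_1})-b(u_n^{N_2})\big)dW(s),
\]
where $a,b$ are the coefficients of \eqref{equ4} with $N$ replaced by $M$. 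This is a finite-dimensional It\^o equation in $H_n$. We apply It\^o's formula to $\|w(t\wedge\tau)\|_{H_n}^2$ and obtain
\[
\|w(t\wedge\tau)\|^2_{H_n}=\int_0^{t\wedge\tau}\Big[2\big(a(u_n^{N_1})-a(u_n^{N_2}),w\big)_{H_n}+\|b(u_n^{N_1})-b(u_n^{N_2})\|^2_{L_2(U_0,H_n)}\Big]ds+\mathcal M(t),
\]
where $\mathcal M(t)=2\int_0^{t\wedge\tau}\big(w,(b(u_n^{N_1})-b(u_n^{N_2}))dW\big)$.

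Next, we reuse the local weak monotonicity estimate from the proof of Theorem \ref{thm3.1}, which holds for arguments with $L^2$-norm at most $2M$. It controls the integrand by $C(\alpha,\beta,\gamma,n,M)\|w\|_{H_n}^2$, the gradient term being absorbed as before. We need the martingale $\mathcal M$ to have zero expectation. Since every quantity is stopped at $\tau$, we have $\|w\|\le 2M$ there. By \eqref{n2} and \eqref{ass3} the integrand of $\mathcal M$ is bounded, so $\mathcal M$ is a genuine martingale. Taking expectations then yields
\[
\mathbb E\|w(t\wedge\tau)\|_{L^2(\mathcal O)}^2\le C\int_0^t\mathbb E\|w(s\wedge\tau)\|_{L^2(\mathcal O)}^2ds.
\]
Since $w(0)=0$ a.s., Gronwall's lemma gives $\mathbb E\|w(t\wedge\tau)\|^2=0$ for every $t$. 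Continuity of paths in $H_n$ upgrades this to equality for all $t\in[0,\tau)$ simultaneously, almost surely.

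I expect the main obstacle to be justifying the identification in the first step. The monotonicity bound of Theorem \ref{thm3.1} uses the positive-part and cut-off form of the nonlinearities. So we must check that before $\tau$ the solutions are nonnegative, which Theorem \ref{thm3.1} provides, and that they stay in the ball where the constants $C(n,M)$ are valid. If the given local solutions are not a priori nonnegative, I would instead bound the reaction term using $|u|$ directly. The maps $u\mapsto u^\alpha$ and $u\mapsto\int u^\beta$ are locally Lipschitz on bounded sets of $H_n$ through \eqref{n2}, so estimates \eqref{n5}--\eqref{n6} still hold with $|u|$ in place of $u^+$, and the same argument goes through.
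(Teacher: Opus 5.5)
Your scheme (It\^o's formula for $\|w\|_{L^2(\mathcal O)}^2$, the local monotonicity bound from the proof of Theorem~\ref{thm3.1}, then Gronwall) is the same as the paper's. Your replacement of the paper's $\mathbb E\sup$ plus Burkholder--Davis--Gundy step by plain expectations, followed by path continuity, is a legitimate simplification. The step that fails in the stated generality is your first one. Definition~\ref{def1} only asks that $\tau_n^{N_i}$ be \emph{some} stopping time, not the exit time $\inf\{t\ge0:\|u_n^{N_i}(t)\|_{L^2(\mathcal O)}\ge N_i\}$ from Theorem~\ref{thm3.1}. So there is no a priori bound $\|u_n^{N_i}\|_{L^2(\mathcal O)}\le N_i$ on $[0,\tau]$, and nonnegativity is not given either, which you already anticipated. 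Without such a bound, two things are unjustified: the constant $C(\alpha,\beta,\gamma,n,M)$ in the monotonicity estimate, and the boundedness of the integrand of $\mathcal M$, on which $\mathbb E\,\mathcal M(t)=0$ rests.

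The missing ingredient is the extra localization the paper uses. Put $\tau_M:=\inf\{t\ge0:\|u_n^{N_1}(t)\|_{L^2(\mathcal O)}\vee\|u_n^{N_2}(t)\|_{L^2(\mathcal O)}\ge M\}\wedge\tau$. Run your argument with $t\wedge\tau_M$ in place of $t\wedge\tau$, with all constants now depending on $M$, to get $w=0$ on $[0,\tau_M]$ almost surely, and then let $M\to\infty$. The stopped processes have continuous paths in $H_n$, so they are bounded on $[0,T\wedge\tau]$ for every $T$. Hence $\tau_M\uparrow\tau$, and the identity extends to all of $[0,\tau)$.

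If you restrict to the exit times constructed in Theorem~\ref{thm3.1}, your fixed choice $M=\max\{N_1,N_2\}$ is valid as it stands and the limit $M\to\infty$ is unnecessary. That is the only case used afterwards for the maximal solution.
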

   \begin{proof}
       We define \[
       \tau_M^1=\inf\{t\ge 0:\|u_n^{N_1}(t)\|_{L^2(\mathcal{O})}\ge M\}\wedge\tau_n^{N_1},~\tau_M^2=\inf\{t\ge 0:\|u_n^{N_2}(t)\|_{L^2(\mathcal{O})}\ge M\}\wedge\tau_n^{N_2}.
       \]
       Let $\tau_M=\tau_M^1\wedge\tau_M^2$ and $w_n=u_n^{N_1}-u_n^{N_2}$. By the definition of local solution, for $t\in[0,T\wedge\tau_M]$,  we have that 
\begin{align*}
    w_n(t)=&{u_n^{N_1}}(0)-{u_n^{N_2}}(0)+\int_0^t\Pi_n\Delta w_n(s)ds+\int_0^t\Pi_n\big(\sigma({u_n^{N_1}}(s))-\sigma({u_n^{N_2}}(s))\big)dW(s)\\&- \int_0^t\Pi_n\nabla\cdot\big(w_n(s)\nabla c_{n}(u_n^{N_1})(s)+{u_n^{N_2}}(s)\nabla\big(c_{n}(u_n^{N_1}(s))-c_{n}(u_n^{N_2}(s))\big)\big)ds\\&+\int_0^t\big(1-\int _\mathcal{O}({u_n^{N_1}}(s))^\beta dx\big)\Pi_n\big(({u_n^{N_1}}(s))^\alpha-({u_n^{N_2}}(s))^\alpha \big)ds\\&+\int_0^t\big(\int _\mathcal{O}({u_n^{N_2}}(s))^\beta dx-\int _\mathcal{O}({u_n^{N_1}}(s))^\beta dx\big)\Pi_n({u_n^{N_2}}(s))^\alpha ds.
\end{align*}
Applying It$\hat{\text{o}}$ formula (\cite{liu2015stochastic}, Theorem 4.2.5) to $\|w_n(t)\|_{L^2(\mathcal{O})}^2$ for $t\in[0,T\wedge\tau_M]$, by (\ref{3.1}), we get 
\begin{align*}
    &\|w_n(t)\|_{L^2(\mathcal{O})}^2-\|{u_n^{N_1}}(0)-{u_n^{N_2}}(0)\|_{L^2(\mathcal{O})}^2\\=&2\int_0^t\big(w_n(s),\Delta w_n(s)\big)_{L^2(\mathcal{O})}ds+2\int_0^t\big(w_n(s),\Pi_n\big(\sigma({u_n^{N_1}}(s))-\sigma({u_n^{N_2}}(s))\big)dW(s)\big)_{L^2(\mathcal{O})}\\&+2 \int_0^t\big(\nabla w_n(s),w_n(s)\nabla c_{n}({u_n^{N_1}})(s)+{u_n^{N_2}}(s)\nabla(c_{n}({u_n^{N_1}}(s))-c_{n}({u_n^{N_2}}(s)))\big)_{L^2(\mathcal{O})}ds\\&+2\int_0^t(1-\int _\mathcal{O}({u_n^{N_1}}(s))^\beta dx)\big(w_n(s),({u_n^{N_1}}(s))^\alpha-({u_n^{N_2}}(s))^\alpha\big)_{L^2(\mathcal{O})}ds\\&+2\int_0^t\big(\int _\mathcal{O}({u_n^{N_2}}(s))^\beta dx-\int _\mathcal{O}({u_n^{N_1}}(s))^\beta dx\big)\big(w_n(s),({u_n^{N_2}}(s))^\alpha\big)_{L^2(\mathcal{O})}ds\\&+\int_0^t\|\Pi_n\big(\sigma({u_n^{N_1}}(s))-\sigma({u_n^{N_2}}(s))\big)\|_{L_2(U_0,L^2(\mathcal{O}))}^2ds\\
    \leq&-2\int_0^t\|\nabla w_n(s)\|_{L^2(\mathcal{O})}^2ds+2\int_0^t\big(w_n(s),\Pi_n\big(\sigma({u_n^{N_1}}(s))-\sigma({u_n^{N_2}}(s))\big)dW(s)\big)_{L^2(\mathcal{O})}\\
    &+2 \int_0^t\|\nabla w_n(s)\|_{L^2(\mathcal{O})}\| w_n(s)\|_{L^2(\mathcal{O})}\|\nabla c_{n}({u_n^{N_1}}(s))\|_{L^\infty(\mathcal{O})}ds\\&+2 \int_0^t\|\nabla w_n(s)\|_{L^2(\mathcal{O})}\|{u_n^{N_2}}(s)\|_{L^2(\mathcal{O})}\|\nabla\big(c_{n}({u_n^{N_1}}(s))-c_{n}({u_n^{N_2}}(s))\big)\|_{L^{\infty}(\mathcal{O})}ds\\&+2\int_0^t(1+\|{u_n^{N_1}}(s)\|_{L^\beta(\mathcal {O})}^\beta)\|w_n(s)\|_{L^2(\mathcal{O})}\|({u_n^{N_1}}(s))^\alpha-({u_n^{N_2}}(s))^\alpha\|_{L^2(\mathcal{O})}ds\\&+2\int_0^t\|({u_n^{N_1}}(s))^\beta-({u_n^{N_2}}(s))^\beta \|_{L^1(\mathcal{O})}\|({u_n^{N_2}}(s))^\alpha\|_{L^2(\mathcal {O})}\|w_n(s)\|_{L^2(\mathcal{O})}ds\\&+\int_0^t\|\sigma({u_n^{N_1}}(s))-\sigma({u_n^{N_2}}(s))\|_{L_2(U_0,L^2(\mathcal{O}))}^2ds.
\end{align*}
Since $\|{u_n^{N_1}}(t)\|_{L^2(\mathcal{O})},\|{u_n^{N_2}}(t)\|_{L^2(\mathcal{O})}<M$, for $t\in[0,T\wedge\tau_M]$, then we use the similar ways with the proof of local weak monotonicity condition to obtain
\begin{align*}
    &\|w_n(t)\|_{L^2(\mathcal{O})}^2-\|{u_n^{N_1}}(0)-{u_n^{N_2}}(0)\|_{L^2(\mathcal{O})}^2\\\leq&2\Big|\int_0^t\big(w_n(s),\Pi_n\big(\sigma({u_n^{N_1}})-\sigma({u_n^{N_2}})\big)dW(s)\big)_{L^2(\mathcal{O})}\Big|+C( \alpha,\beta,\gamma,n,M)\int_0^t\|w_n(s)\|_{L^2(\mathcal{O})}^2ds.
\end{align*}
      Taking a supremum over $t\in[0,T\wedge\tau_M]$ and the expectation,
      using the
Burkholder-Davis-Gundy inequality, by (\ref{ass4}) and Young's inequality, we get
\begin{align}
    \nonumber &2\mathbb{E}\Big(\sup_{t\in[0,T\wedge\tau_M]}\Big|\int_{0}^{t}\big(w_n(s), \Pi_n\big(\sigma ({u_n^{N_1}}(s))-\sigma({u_n^{N_2}}(s))\big)dW(s)\big)_{L^2(\mathcal{O})}\Big|\Big)\\\nonumber
\leq&C\mathbb{E}\Big(\int_{0}^{T\wedge\tau_M}\|w_n(t)\|_{L^2(\mathcal{O})}^2\|\sigma ({u_n^{N_1}}(t))-\sigma ({u_n^{N_2}}(t))\|_{L_2(U_0,L^2(\mathcal{O}))}^2dt\Big)^\frac{1}{2}\\\nonumber
\leq&C\mathbb{E}\Big(\Big(\sup_{t\in[0,T\wedge\tau_M]}\|w_n(t)\|_{L^2(\mathcal{O})}^2\Big)^\frac{1}{2}\Big(\int_{0}^{T\wedge\tau_M}\|\sigma ({u_n^{N_1}}(t))-\sigma ({u_n^{N_2}}(t))\|_{L_2(U_0,L^2(\mathcal{O}))}^2dt\Big)^\frac{1}{2}\Big)\\
    \leq&\frac{1}{2}\mathbb{E}\Big(\sup_{t\in[0,T\wedge\tau_M]}\|w_n(t)\|_{L^2(\mathcal{O})}^2\Big)+C(\gamma,n,M)\mathbb{E}\Big(\int_{0}^{T\wedge\tau_M}\|w_n(t)\|_{L^2(\mathcal{O})}^{2}dt\Big).
\end{align}
Furthermore, 
we obtain that
      \begin{align*}
\mathbb{E}\Big(\sup_{t\in[0,T\wedge\tau_M]}\|w_n({t})\|_{L^2(\mathcal{O})}^2\Big) &\leq C( \alpha,\beta,\gamma,n,M)\mathbb{E}\Big(\int_{0}^{T\wedge\tau_M}\|w_n(t)\|_{L^{2}(\mathcal{O})}^2dt\Big)\\&\leq C \mathbb{E}\Big(\int_{0}^{T}\sup_{s\in[0,t\wedge\tau_M]}\|w_n(s)\|_{L^{2}(\mathcal{O})}^2ds\Big).
\end{align*}
Then, using the Gronwall lemma, we have 
\begin{align*}
\mathbb{E}\Big(\sup_{t\in[0,T\wedge\tau_M]}\|w_n({t})\|_{L^2(\mathcal{O})}^2\Big)=0,
\end{align*}
which implies that $\|{u_n^{N_1}}-{u_n^{N_2}}\|_{L^2(\mathcal{O})}=0,$ for all $t\in[0,T\wedge\tau_M]$, $\mathbb{P}$-a.s. Letting $M\to\infty$ gives $\tau_M\uparrow\tau$ and therefore ${u_n^{N_1}}={u_n^{N_2}}$ on $[0,\tau)$, $\mathbb P$-a.s.
   \end{proof}

\subsection{Maximal solution}
   Next, we construct the maximal local solution by   using the similar idea from \cite[Theorem~3.28]{kuehn2020pathwise} and \cite[Section~4]{chen2025well}.   
   Actually, the maximal solution is defined in the following.
   \begin{defn}(Maximal solution) A maximal solution to (\ref{equ2}) is a pair $(u_n,\tau^*)$ such that for each $n\in\mathbb{N}$, $u_n(\cdot \wedge{\tau_n})$ is a local solution in the sense of Definition~\ref{def1}  and ${\tau_n}\to \tau ^*$ increasingly and $\mathbb{P}$-a.s.
   	\[
   	\sup_{t\in [0,\tau _n]}\|u_n(t)\|_{L^2(\mathcal{O})}\ge n,~~\text{on} ~\{\tau^*<\infty\}.
   	\]
   	If $\tau ^*=\infty$,     $\mathbb{P}$-a.s., the solution is called global. 
   \end{defn}

   Let $\{u_n^N(\cdot\wedge\tau_n^N)\}_{N\in\mathbb N}$ be the family of local strong solutions to the equation (\ref{equ2}), where for each $N\in\mathbb N$ the stopping time is defined by
\[
\tau_n^N:=\inf\{t\ge0:\|u_n^N(t)\|_{L^2(\mathcal O)}\ge N\}.
\] By pathwise uniqueness of local solutions, we conclude that $\{{\tau_n^N}\}_{N\in\mathbb{N}}$ is an increasing sequence of stopping times and for $N_1<N_2$,
\[
u_n^{N_1}(t)=u_n^{N_2}(t)\quad\text{for all }t\in[0,\tau_{N_1}),\ \mathbb P\text{-a.s.}
\]
   Denote $\tau^*:=\lim
   _{N\to\infty}{\tau_n^N}$. Let $(u_n,\tau^*)$ be the stochastic process defined by \[
   u_n(t):=u_n^N(t)~\text{on}~t\in[0,{\tau_n^N}),\ \mathbb P\text{-a.s.}
   \]
   Hence, $u_n$ is $H_n$-valued with $\mathbb P$-a.s. continuous paths on $[0,\tau^*)$ and $(u_n,\tau^*)$ is a local strong solution of the equation (\ref{equ2}). 
Since $\{\tau^*<\infty\}\subset\{{\tau_n^N}<\infty\}$ for every fixed $N$, $\sup_{t\in[0,{\tau_n^N}]}\|u_n^N(t)\|_{L^2(\mathcal{O})}\ge N$ on $\{\tau^*<\infty\}$.
It follows that on $\{\tau^*<\infty\}$,
\[
\sup_{0\le t<\tau^*}\|u_n(t)\|_{L^2(\mathcal O)}
\ge \sup_{0\le t\le{\tau_n^N}}\|u_n^N(t)\|_{L^2(\mathcal O)}\ge N.\]
Therefore, $\tau^*$ is the maximal existence time. Together with Theorem~\ref{thm3.2}, $(u_n,\tau^*)$
is the pathwise  unique maximal local strong solution to \eqref{equ2}. 
 
  \subsection{Uniform Estimates}
  In this subsection, we give the key uniform estimates of this paper.
\begin{lem}\label{lem3}
 Let $d\ge3,\,\alpha,\,\beta >1, \,\gamma\ge 2$ and $\alpha+1>\beta$. 	Under Assumption \ref{ass}, for $n\in\mathbb N$ and $N>0$, let  $(u_n,{\tau_n^N})$ be a pathwise unique local strong solution to (\ref{equ2}). If \begin{align}\label{condition}
     \max\{3,\gamma,\alpha+1\}<\frac{2}{d+2}(\alpha-1+\beta)+2,
 \end{align}
    then for all $T>0$, there exists a positive  constant $C(T)$ independent of $n$ and $N$  such that
    \begin{align}
\sup_{n\ge1}\mathbb{E}\Big(\sup_{t\in[0,T\wedge{\tau_n^N}]}\|u_n(t)\|_{L^2(\mathcal{O})}^2\Big)\leq C(T),
   \\
\sup_{n\ge1}\mathbb{E}\Big(\int_{0}^{T\wedge{\tau_n^N}}\|\nabla u_n(t)\|_{L^2(\mathcal{O})}^2dt\Big)\leq C(T),
  \\
\sup_{n\ge1}\mathbb{E}\Big(\int_{0}^{T\wedge{\tau_n^N}}\|u_n(t)\|_{L^{\alpha+1}(\mathcal{O})}^{\alpha+1}\|u_n(t)\|_{L^{\beta}(\mathcal{O})}^{\beta}dt\Big)\leq C(T).
    \end{align}
    Moreover,  for any $k\in[1,\frac{2}{d+2}(\alpha-1+\beta)+2)$, 
   there exists a positive  constant $C(T,k)$  independent of $n$ and $N$  such that
 \begin{align}
\sup_{n\ge1}\mathbb{E}\Big(\int_{0}^{T\wedge{\tau_n^N}}\|u_n(t)\|_{L^{k}(\mathcal{O})}^{k}dt\Big)\leq C(T,k).
 \end{align}
\end{lem}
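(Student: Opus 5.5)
Apply Itô's formula to $\|u_n(t)\|_{L^2(\mathcal O)}^2$ on $[0,T\wedge\tau_n^N]$. On this interval $u_n\ge0$ by Theorem \ref{thm3.1}, $u_n\in H_n$, and \eqref{3.1} lets us drop $\Pi_n$ in every pairing with $u_n$. This gives
\begin{align*}
\|u_n(t)\|_{L^2}^2+2\int_0^t\|\nabla u_n\|_{L^2}^2ds+2\int_0^t\|u_n\|_{L^{\alpha+1}}^{\alpha+1}\|u_n\|_{L^\beta}^\beta ds
=&\|u_n(0)\|_{L^2}^2+2\int_0^t\!\!\int_{\mathcal O}u_n\nabla c_n\cdot\nabla u_n\,dxds\\
&+2\int_0^t\|u_n\|_{L^{\alpha+1}}^{\alpha+1}ds+\int_0^t\|\Pi_n\sigma(u_n)\|_{L_2}^2ds+M_n(t),
\end{align*}
where $M_n$ is the stochastic integral. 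For the chemotaxis term, integrate by parts and use the elliptic equation $-\Delta c_n=u_n-c_n$:
\[
2\int u_n\nabla c_n\cdot\nabla u_n\,dx=\int\nabla(u_n^2)\cdot\nabla c_n\,dx=\int u_n^2(u_n-c_n)\,dx\le\|u_n\|_{L^3}^3,
\]
since $c_n\ge0$ by the maximum principle. The noise term is bounded by $C\|u_n\|_{L^\gamma}^\gamma$ using \eqref{3.2} and \eqref{ass3}. The right-hand side therefore carries the bad terms $\|u_n\|_{L^3}^3+2\|u_n\|_{L^{\alpha+1}}^{\alpha+1}+C\|u_n\|_{L^\gamma}^\gamma$, all of the form $\|u_n\|_{L^q}^q$ with $q\le\max\{3,\gamma,\alpha+1\}$.

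The key step, and the expected main obstacle, is to absorb every such $\|u_n\|_{L^q}^q$ into $\|\nabla u_n\|_{L^2}^2$ and the nonlocal dissipation $D:=\|u_n\|_{L^{\alpha+1}}^{\alpha+1}\|u_n\|_{L^\beta}^\beta$. I would use Lemma \ref{lem1} with $r=\beta$ and small $C_0$, $C_1$ to get
\[
\|u\|_{L^q}^q\le\varepsilon\|\nabla u\|_{L^2}^2+\varepsilon\|u\|_{L^2}^2+C_\varepsilon\|u\|_{L^\beta}^{\zeta}.
\]
The hypothesis $\alpha+1>\beta$ and condition \eqref{condition} are meant to make the remainder $\|u\|_{L^\beta}^\zeta$ controllable by $D$. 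Since $\|u\|_{L^{\alpha+1}}^{\alpha+1}\ge C\|u\|_{L^\beta}^{\alpha+1}$ by Hölder, $D\gtrsim\|u\|_{L^\beta}^{\alpha+1+\beta}$, so it suffices that $\zeta<\alpha+1+\beta$; then Young's inequality gives $C_\varepsilon\|u\|_{L^\beta}^\zeta\le\varepsilon D+C$. If $\zeta$ is too large I would instead interpolate $L^q$ between $L^{\alpha+1}$ and $L^{2(d+2)/d}$-type norms, or use Lemma \ref{lem1} with $r=1$ together with $\|u\|_{L^1}\le C\|u\|_{L^\beta}$, and check the exponent arithmetic against $q<\frac{2}{d+2}(\alpha-1+\beta)+2$. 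I expect this bookkeeping to determine exactly the stated threshold. The $\varepsilon\|u\|_{L^2}^2$ term is left for Gronwall.

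Next, take $\sup_{t\le T\wedge\tau_n^N}$ and then expectations. Bound the martingale by \eqref{BDG}:
\[
\mathbb E\sup|M_n|\le C\,\mathbb E\Big(\sup\|u_n\|_{L^2}^2\int\|u_n\|_{L^\gamma}^\gamma\Big)^{1/2}\le\tfrac12\mathbb E\sup\|u_n\|_{L^2}^2+C\,\mathbb E\int\|u_n\|_{L^\gamma}^\gamma,
\]
and absorb the last term again via the previous step. Gronwall's lemma together with $\sup_n\mathbb E\|u_n(0)\|_{L^2}^2<\infty$ then yields the first three bounds, with constants independent of $n$ and $N$ because every constant came from Sobolev, elliptic and Assumption \ref{ass} bounds only.

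Finally, for $k<\frac{2}{d+2}(\alpha-1+\beta)+2$, the same Lemma \ref{lem1}/Young argument with $q=k$ bounds $\mathbb E\int\|u_n\|_{L^k}^k$ by the already controlled quantities $\mathbb E\int\|\nabla u_n\|^2$, $\mathbb E\int\|u_n\|_{L^2}^2$ and $\mathbb E\int D$, plus $CT$. For $k\le2$ the bound follows directly from Hölder's inequality.
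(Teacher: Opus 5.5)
There is a genuine gap at the step you flag as the crux: the absorption with $r=\beta$ does not reach the stated threshold. The rest of your architecture matches the paper: It\^o's formula for $\|u_n\|_{L^2}^2$, the chemotaxis term bounded by $\|u_n\|_{L^3}^3$, the noise bounded by $C\|u_n\|_{L^\gamma}^\gamma$, BDG, and Gronwall.

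Using $1-\frac2p=\frac2d$, Lemma \ref{lem1} gives $\zeta=\frac{r(2d-q(d-2))}{2r-d(q-2)}$. Write $M=\alpha+1+\beta$. Then the requirement $\zeta<M$ is equivalent to $(q-2)\big(Md-r(d-2)\big)<2r(M-2)$. For $r=\beta$ this becomes $q<2+\frac{2\beta(\alpha-1+\beta)}{d(\alpha+1)+2\beta}$. That bound is strictly smaller than $2+\frac{2(\alpha-1+\beta)}{d+2}$ precisely because $\beta<\alpha+1$.

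A concrete failure: take $d=3$, $\alpha=3.5$, $\beta=4$, $\gamma=2$. Then \eqref{condition} holds, since $4.5<4.6$. But for $q=\alpha+1=4.5$ and $r=\beta=4$ you get $\zeta=12>8.5=M$. So Young's inequality cannot absorb $\|u\|_{L^\beta}^{\zeta}$ into $\|u\|_{L^\beta}^{M}\lesssim D$. Your fallbacks do not rescue this. Taking $r=1$ is worse, because Lemma \ref{lem1} then forces $q<2+\frac2d<3$. The $L^{2(d+2)/d}$ interpolation is never specified.

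The information is lost in the lower bound $D\gtrsim\|u\|_{L^\beta}^{\alpha+1+\beta}$, which throws away the $L^{\alpha+1}$ integrability inside $D$. The paper instead exploits the product structure of $D$ by H\"older interpolation at the intermediate exponent $r=\frac{\alpha+1+\beta}{2}=\frac M2$. With $\theta=\frac{\alpha+1}{M}$ one has $\|u\|_{L^r}^{M}\le\|u\|_{L^{\alpha+1}}^{\alpha+1}\|u\|_{L^\beta}^{\beta}=D$. Substituting $r=M/2$ into the inequality above turns $\zeta<M$ into exactly $q-2<\frac{2}{d+2}(\alpha-1+\beta)$. Young then gives $C\|u\|_{L^r}^{\zeta}\le \varepsilon D+C_\varepsilon$.

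The side conditions of Lemma \ref{lem1} also hold for $q\ge\alpha+1$:
\begin{itemize}
\item $r<\alpha+1\le q$, using $\beta<\alpha+1$;
\item $2r>d(q-2)$ and $q<\frac{2d}{d-2}$, both following from \eqref{condition} together with $\beta<\alpha+1$.
\end{itemize}

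With this choice of $r$ in place of $r=\beta$, your remaining steps go through as in the paper. These are absorbing at $m=\max\{3,\gamma,\alpha+1\}$, H\"older for lower exponents, BDG, Gronwall, and reusing the same inequality for general $k$.
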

\begin{proof}
Applying It$\hat{\text{o}}$ formula to $\|u_n\|_{L^2(\mathcal{O})}^2$, we obtain for $t\in [0,T]$ that
\begin{align}\label{1L^2}
\nonumber&\|u_n({t\wedge{\tau_n^N}})\|_{L^2(\mathcal{O})}^2-\|u_n(0)\|_{L^2(\mathcal{O})}^2\\
\nonumber=&2\int_{0}^{t\wedge{\tau_n^N}}\big(u_n(s), \Pi_n\Delta u_n(s)\big)_{L^2(\mathcal{O})}ds+2\int_{0}^{t\wedge{\tau_n^N}}\big(u_n(s), \Pi_n\sigma (u_n(s))dW(s)\big)_{L^2(\mathcal{O})}\\
\nonumber&+2\int_{0}^{t\wedge{\tau_n^N}}\big(u_n(s), \Pi_nu_n^\alpha(s)(1-\int_\mathcal{O}u_n^{\beta}(s)dx)\big)_{L^2(\mathcal{O})}ds\\\nonumber
&- 2 \int_{0}^{t\wedge{\tau_n^N}}\big(u_n(s), \Pi_n\nabla  \cdot (u_n(s) \nabla c_n(s))\big)_{L^2(\mathcal{O})}ds+ \int_{0}^{t\wedge{\tau_n^N}}\|\Pi_n\sigma (u_n(s))\|_{L_2(U_0,L^2(\mathcal{O}))}^2ds\\
\nonumber\leq&2\int_{0}^{t\wedge{\tau_n^N}}\big(u_n(s), \Delta u_n(s)\big)_{L^2(\mathcal{O})}ds+2\int_{0}^{t\wedge{\tau_n^N}}\big(u_n(s), \Pi_n\sigma (u_n(s))dW(s)\big)_{L^2(\mathcal{O})}\\
\nonumber&+2\int_{0}^{t\wedge{\tau_n^N}}\int_{\mathcal{O}}u^{\alpha+1}_n(s)dxds-2\int_{0}^{t\wedge{\tau_n^N}}\int_{\mathcal{O}}u^{\alpha+1}_n(s)dx\int_{\mathcal{O}}u^{\beta}_n(s)dxds\\
&- 2 \int_{0}^{t\wedge{\tau_n^N}}\big(u_n(s), \nabla  \cdot (u_n(s) \nabla c_n(s))\big)_{L^2(\mathcal{O})}ds+ \int_{0}^{t\wedge{\tau_n^N}}\|\sigma (u_n(s))\|_{L_2(U_0,L^2(\mathcal{O}))}^2ds,
\end{align}
where since $u_n\in H_n$, we used that $\Pi_n$ is self-adjoint and contractive on $L^2(\mathcal O)$.
 Recalling (\ref{equ2}) and using (\ref{c}), we get 
\begin{align*}
    &- 2  \int_{\mathcal{O}}u_n(s)\nabla  \cdot (u_n(s) \nabla c_n(s))dx= \int_{\mathcal{O}}u^2_n(s)(u_n(s)-c_n(s))dx\\\leq&  \|u_n\|_{L^3(\mathcal{O})}^3+ \|u_n^2\|_{L^\frac{3}{2}(\mathcal{O})}\|c_n\|_{L^3(\mathcal{O})}\leq C\|u_n\|_{L^3(\mathcal{O})}^3,
\end{align*}
substituting the above formula into (\ref{1L^2}) and using (\ref{ass3}) in  Assumption \ref{ass}, we have 
\begin{align}\label{L^2}
    \nonumber&\|u_n({t\wedge{\tau_n^N}})\|_{L^2(\mathcal{O})}^2 +2\int_{0}^{t\wedge{\tau_n^N}}\|\nabla u_n(s)\|_{L^2(\mathcal{O})}^2ds+2\int_{0}^{t\wedge{\tau_n^N}}\int_{\mathcal{O}} u^{\alpha+1}_n(s)dx\int_{\mathcal{O}} u_n^{\beta}(s)dxds\\ \nonumber
   \leq&\|u_n(0)\|_{L^2(\mathcal{O})}^2+2\Big|\int_{0}^{t\wedge{\tau_n^N}}(u_n(s), \Pi_n\sigma (u_n(s))dW(s))_{L^2(\mathcal{O})}\Big|+2\int_{0}^{t\wedge{\tau_n^N}}\int_{\mathcal{O}}u^{\alpha+1}_n(s)dxds\\ 
&+C\int_{0}^{t\wedge{\tau_n^N}}\int_{\mathcal{O}}u^3_n(s)dxds+C\int_{0}^{t\wedge{\tau_n^N}}\|u_n(s)\|_{L^\gamma(\mathcal{O})}^\gamma ds.
\end{align}
Taking the supremum over $[0,T]$ and the expectation from both sides of (\ref{L^2}), we obtain the following
\begin{align}\label{25L^2}
\nonumber&\mathbb{E}\Big(\sup_{t\in[0,T]}\|u_n({t\wedge{\tau_n^N}})\|_{L^2(\mathcal{O})}^2\Big) +2\mathbb{E}\Big(\int_{0}^{T\wedge{\tau_n^N}}\|\nabla u_n(t)\|_{L^2(\mathcal{O})}^2dt\Big)\\\nonumber&+2\mathbb{E}\Big(\int_{0}^{T\wedge{\tau_n^N}}\int_{\mathcal{O}} u^{\alpha+1}_n(t)dx\int_{\mathcal{O}} u_n^{\beta}(t)dxdt\Big)\\\nonumber \leq&\mathbb{E}\big(\|u_n(0)\|_{L^2(\mathcal{O})}^2\big)+2\mathbb{E}\Big(\int_{0}^{T\wedge{\tau_n^N}}\|u_n(t)\|_{L^{\alpha+1}(\mathcal{O})}^{\alpha+1}dt\Big)+C\mathbb{E}\Big(\int_{0}^{T\wedge{\tau_n^N}}\|u_n(t)\|_{L^{3}(\mathcal{O})}^{3}dt\Big)\\&+C\mathbb{E}\Big(\int_{0}^{T\wedge{\tau_n^N}}\|u_n(t)\|_{L^\gamma(\mathcal{O})}^\gamma dt\Big)+2\mathbb{E}\Big(\sup_{t\in[0,T]}\Big|\int_{0}^{t\wedge{\tau_n^N}}(u_n(s), \Pi_n\sigma (u_n(s))dW(s))_{L^2(\mathcal{O})}\Big|\Big).
\end{align}
For the last term on the right of (\ref{25L^2}), by the Burkholder-Davis-Gundy inequality and Young's inequality, using Assumption (\ref{ass3}), we get
\begin{align}\label{6L^2}
    \nonumber &2\mathbb{E}\Big(\sup_{t\in[0,T]}\Big|\int_{0}^{t\wedge{\tau_n^N}}(u_n(s), \Pi_n\sigma (u_n(s))dW(s))_{L^2(\mathcal{O})}\Big|\Big)\\\nonumber
\leq&C\mathbb{E}\Big(\int_{0}^{T\wedge{\tau_n^N}}\|u_n(t)\|_{L^2(\mathcal{O})}^2\|\Pi_n\sigma (u_n(t))\|_{L_2(U_0,L^2(\mathcal{O}))}^2dt\Big)^\frac{1}{2}\\\nonumber
\leq&C\mathbb{E}\Big(\Big(\sup_{t\in[0,T]}\|u_n(t\wedge{\tau_n^N})\|_{L^2(\mathcal{O})}^2\Big)^\frac{1}{2}\Big(\int_{0}^{T\wedge{\tau_n^N}}\|\sigma (u_n(t))\|_{L_2(U_0,L^2(\mathcal{O}))}^2dt\Big)^\frac{1}{2}\Big)\\
    \leq&\frac{1}{2}\mathbb{E}\Big(\sup_{t\in[0,T]}\|u_n(t\wedge{\tau_n^N})\|_{L^2(\mathcal{O})}^2\Big)+C\mathbb{E}\Big(\int_{0}^{T\wedge{\tau_n^N}}\|u_n(t)\|_{L^\gamma(\mathcal{O})}^\gamma dt\Big).
\end{align}
Substituting (\ref{6L^2}) into (\ref{25L^2}), we see that 
\begin{align}\label{35L^2}
\nonumber&\frac{1}{2}\mathbb{E}\Big(\sup_{t\in[0,T]}\|u_n({t\wedge{\tau_n^N}})\|_{L^2(\mathcal{O})}^2\Big) +2\mathbb{E}\Big(\int_{0}^{T\wedge{\tau_n^N}}\|\nabla u_n(t)\|_{L^2(\mathcal{O})}^2dt\Big)\\\nonumber&+2\mathbb{E}\Big(\int_{0}^{T\wedge{\tau_n^N}}\int_{\mathcal{O}} u^{\alpha+1}_n(t)dx\int_{\mathcal{O}} u_n^{\beta}(t)dxdt\Big)\\\nonumber \leq&\mathbb{E}\big(\|u_n(0)\|_{L^2(\mathcal{O})}^2\big)+2\mathbb{E}\Big(\int_{0}^{T\wedge{\tau_n^N}}\|u_n(t)\|_{L^{\alpha+1}(\mathcal{O})}^{\alpha+1}dt\Big)+C\mathbb{E}\Big(\int_{0}^{T\wedge{\tau_n^N}}\|u_n(t)\|_{L^{3}(\mathcal{O})}^{3}dt\Big)\\&+C\mathbb{E}\Big(\int_{0}^{T\wedge{\tau_n^N}}\|u_n(t)\|_{L^\gamma(\mathcal{O})}^\gamma dt\Big).
\end{align}
Next, we use Lemma~\ref{lem1} together with interpolation and Young's inequality to derive an absorbable
$L^k(\mathcal{O})$-estimate. 
\begin{Claim}
Let $\alpha>1,\,\beta>1$ and $\alpha+1>\beta$. We   define
 \[K=\{k:\alpha+1\leq k<\frac{2}{d+2}(\alpha-1+\beta)+2\},\]
 for any $k\in K$, $\varepsilon_1,\varepsilon_2>0$, there exists a constant
$C>0$ independent of $n,\,N$  such that for a.e.\ $t\in[0,T\wedge{\tau_n^N}]$
\begin{align}\label{f10}
\|u_n(t)\|_{L^{k}(\mathcal{O})}^{k}
\leq\;& \varepsilon_1\|\nabla u_n(t)\|_{L^{2}(\mathcal{O})}^{2}
+\varepsilon_2\|u_n(t)\|_{L^{\alpha+1}(\mathcal{O})}^{\alpha+1}\|u_n(t)\|_{L^{\beta}(\mathcal{O})}^{\beta}
+C\|u_n(t)\|_{L^{2}(\mathcal{O})}^{2}+C.
\end{align} 
\end{Claim}
\begin{proof}[Proof of Claim]
By Lemma \ref{lem1}, we have that   for any $k\in K$, if $r\ge1$ satisfying 
\begin{align}\label{d4}
    \frac{d(k-2)}{2}<r<k<\frac{2d}{d-2},
\end{align}
for any $\varepsilon_1>0$, there exists the constant $C({\varepsilon_1,d})>0$ such that  
\begin{align}\label{dd7}
    \|u_n\|_{L^k(\mathcal{O})}^k\leq \varepsilon_1\|\nabla u_n\|_{L^2(\mathcal{O})}^2+C(\varepsilon_1,d)\| u_n\|_{L^r(\mathcal{O})}^{\zeta}+C\| u_n\|_{L^2(\mathcal{O})}^2,
\end{align}
where 
\[\lambda = \frac{\frac{1}{r} - \frac{1}{k}}{\frac{1}{r} - \frac{d-2}{2d}}\in(0,1),\,\zeta =\frac{2k(1-\lambda)}{2-\lambda k}.\]
Then, using the interpolation inequality, we get
\begin{align}\label{interpolation}
    \| u_n\|_{L^r(\mathcal{O})}^{\zeta}\leq (\| u_n\|_{L^{\alpha+1}(\mathcal{O})}^{\alpha+1}\| u_n\|_{L^\beta(\mathcal{O})}^\beta)^\frac{\zeta\theta}{\alpha+1}\| u_n\|_{L^\beta(\mathcal{O})}^{\zeta(1-\theta-\frac{\theta\beta}{\alpha+1})}
\end{align}
with $\theta=\frac{\frac{1}{\beta}-\frac{1}{r}}{\frac{1}{\beta}-\frac{1}{\alpha+1}}\in (0,1)$.
Furthermore, by the definition of $K$ and (\ref{d4}), we can take $r=\frac{\alpha+1+\beta}{2}$ such that $1-\theta-\frac{\theta\beta}{\alpha+1}=0$.
If for every $k\in K$, it holds $\frac{\zeta\theta}{\alpha+1}<1$, then by Young's inequality, for any $\varepsilon_2>0$, there exists a positive constant $C(\varepsilon_2)$ such that 
\begin{align}\label{dd6}
     C(\varepsilon_1,d)\| u_n\|_{L^r(\mathcal{O})}^{\zeta}\leq \varepsilon_2\| u_n\|_{L^{\alpha+1}(\mathcal{O})}^{\alpha+1}\| u_n\|_{L^\beta(\mathcal{O})}^\beta+C(\varepsilon_1,\varepsilon_2,\alpha,\beta,d).
\end{align}
Therefore, substituting (\ref{dd6}) into (\ref{dd7}), we obtain that the estimate (\ref{f10}) holds.
It is necessary to verify that the conditions $k\in K,\,r=\frac{\alpha+1+\beta}{2}$ under which $\frac{\zeta\theta}{\alpha+1}<1$ holds.
 Indeed, by the definition of $\zeta,\,\theta,\,\lambda$, we know that $\frac{\zeta\theta}{\alpha+1}<1$ is equivalent  to 
\begin{align}\label{3.25}
    (1-\frac{r(d-2)}{2d})\frac{k-\alpha-1}{\beta}<(\frac{1}{2}-\frac{d-2}{2d}-\frac{\alpha-1}{2\beta})(k-r).
\end{align}
Define
\begin{align}\label{A}
    A_0=\frac{1}{2}-\frac{d-2}{2d}-\frac{\alpha-1}{2\beta},\,A_1=\frac{k-\alpha-1}{\beta}.
\end{align}

The condition (\ref{d4}) implies    $1-\frac{r(d-2)}{2d}>0$ and $k-r>0$. By the definition of $K$, we know that  $A_0>0$ and  $A_1\ge 0$.  Then, by some simple computations, 
we obtain (\ref{3.25}) is equivalent to
\begin{align}\label{d2}
    2B>D,
\end{align}
where
\[
B:=\frac{A_0}{2}-A_1(\frac{1}{2}-\frac{d-2}{4d}),~D:=A_0\Big(\frac{\alpha+\beta-1}{2}-(k-2)\Big)-\frac{A_1(d-2)}{4d}\Big(\alpha+\beta-1\Big).
\]
Since 
\begin{align}\label{d1}
  k-2<\frac{2}{d+2}(\alpha-1+\beta),
\end{align} which is equivalent to
\begin{align*}
    \frac{k-\alpha-1}{\beta}\Big(1-\frac{d-2}{2d}\Big)<\frac{1}{2}-\frac{d-2}{2d}-\frac{\alpha-1}{2\beta},
\end{align*}
 it shows that $B>0$. 
In the following, we use (\ref{d1}) to prove  $D>0$. 
Indeed, 
\begin{align*}
   \nonumber D=&A_0\Big(\frac{\alpha+\beta-1}{2}-(k-2)\Big)-\frac{A_1(d-2)}{4d}\Big(\alpha+\beta-1\Big)\\
   \nonumber >&A_0\Big(\frac{\alpha+\beta-1}{2}-\frac{2}{d+2}(\alpha-1+\beta)\Big)-\frac{A_1(d-2)}{4d}\Big(\alpha+\beta-1\Big)\\\nonumber
    =&\frac{A_0(d-2)}{d+2}\frac{\alpha+\beta-1}{2}-\frac{A_1(d-2)}{4d}\Big(\alpha+\beta-1\Big)>0.
\end{align*}
Then (\ref{d2})  is equivalent to 
\begin{align}\label{A0}
    \nonumber 2>&\frac{D}{B}=\frac{\Big(-A_0-\frac{(\alpha+\beta-1)(d-2)}{4d\beta}\Big)\Big(k-1\Big)+\frac{\alpha(d-2)}{2d\beta}\frac{\alpha+\beta-1}{2}+A_0\frac{\alpha+\beta-1}{2}}{-\frac{1}{\beta}\Big(\frac{1}{2}-\frac{d-2}{4d}\Big)\Big(k-1\Big)+\frac{A_0}{2}+\frac{\alpha}{\beta}\Big(\frac{1}{2}-\frac{d-2}{4d}\Big)}\\
    \nonumber=&\frac{-\frac{1}{\beta}\Big(\beta-(\alpha-1)\Big)\Big(\frac{1}{2}-\frac{d-2}{4d}\Big)\Big(k-1\Big)+\Big(\beta-(\alpha-1)\Big)\Big(\frac{A_0}{2}+\frac{\alpha}{\beta}\Big(\frac{1}{2}-\frac{d-2}{4d}\Big)\Big)}{-\frac{1}{\beta}\Big(\frac{1}{2}-\frac{d-2}{4d}\Big)\Big(k-1\Big)+\frac{A_0}{2}+\frac{\alpha}{\beta}\Big(\frac{1}{2}-\frac{d-2}{4d}\Big)}\\
    =&\beta-(\alpha-1).
\end{align}
Combining (\ref{d1}), (\ref{A0}), and $A_0>0,\,A_1\ge0$, we infer that for any $k\in K$,
it holds $\frac{\zeta\theta}{\alpha+1}<1$. Therefore, for any $k\in K$, the estimate (\ref{f10}) holds. This finishes the proof of \textbf{Claim}.
\end{proof}
Let $m:=\max\{3,\gamma,\alpha+1\}$. Since $\mathcal O$ is a bound domain, for any $p\le m$,
H\"older's inequality and Young's inequality imply\begin{align}\label{dd8}
    \|u_n(t)\|_{L^p(\mathcal {O})}^{p}\le C\big(1+\|u_n(t)\|_{L^m(\mathcal {O})}^{m}\big).
\end{align}
Hence, in order to control the three terms $\|u_n\|_{L^3(\mathcal {O})}^3$, $\|u_n\|_{L^\gamma(\mathcal {O})}^\gamma$ and $\|u_n\|_{L^{\alpha+1}(\mathcal {O})}^{\alpha+1}$,
it is sufficient to control $\|u_n\|_{L^m(\mathcal {O})}^{m}$.
By the condition (\ref{condition}) on Theorem \ref{thm3.2}, we know $m\in K$. Applying \eqref{f10} with $k=m$, integrating over $[0,T\wedge{\tau_n^N}]$ and substituting into
\eqref{35L^2}, we choose $\varepsilon_1,\varepsilon_2>0$ small so that the
$\varepsilon_1$- and $\varepsilon_2$-terms are absorbed by the left-hand side of \eqref{35L^2}.
Therefore, we infer  that 
\begin{align}\label{7L^2}
     \nonumber&\frac{1}{2}\mathbb{E}\Big(\sup_{t\in[0,T]}\|u_n(t\wedge{\tau_n^N})\|_{L^2(\mathcal{O})}^2\Big) +\mathbb{E}\Big(\int_{0}^{T\wedge{\tau_n^N}}\|\nabla u_n(t)\|_{L^2(\mathcal{O})}^2dt\Big)\\
     \nonumber&+\mathbb{E}\Big(\int_{0}^{T\wedge{\tau_n^N}}\int_{\mathcal{O}} u^{\alpha+1}_n(t)dx\int_{\mathcal{O}} u_n^{\beta}(t)dxdt\Big)\\  \nonumber\leq&\mathbb{E}\big(\|u_n(0)\|_{L^2(\mathcal{O})}^2\big)+C\mathbb{E}\int_{0}^{T\wedge{\tau_n^N}}(1+\|u_n(t)\|_{L^2(\mathcal{O})}^2)dt\\\leq &\sup_{n\ge1}\mathbb{E}\big(\|u_n(0)\|_{L^2(\mathcal{O})}^2\big)+C\mathbb{E}\int_{0}^{T}(1+\sup _{s\in[0,t]}\|u_n(s\wedge{\tau_n^N})\|_{L^2(\mathcal{O})}^2)ds.
\end{align}
Using the Gronwall lemma yields that 
\begin{align}\label{dd9}
    \nonumber&\frac{1}{2}\mathbb{E}\Big(\sup_{t\in[0,T]}\|u_n({t\wedge{\tau_n^N}})\|_{L^2(\mathcal{O})}^2\Big) +\mathbb{E}\Big(\int_{0}^{T\wedge{\tau_n^N}}\|\nabla u_n(t)\|_{L^2(\mathcal{O})}^2dt\Big)\\
&+\mathbb{E}\Big(\int_{0}^{T\wedge{\tau_n^N}}\int_{\mathcal{O}} u^{\alpha+1}_n(t)dx\int_{\mathcal{O}} u_n^{\beta}(t)dxdt\Big)\leq C\Big(T,\mathbb{E}(\|u_0\|_{L^2(\mathcal{O})}^2,\alpha,\gamma,d\Big).
\end{align}

Consequently, by (\ref{f10}), (\ref{dd8}) and (\ref{dd9}), we have that for any $k\in[1,\frac{2}{d+2}(\alpha-1+\beta)+2)$ there exists $C>0$ independent of $n,N$ such that
\begin{align}
    \mathbb{E}\Big(\int_{0}^{T\wedge{\tau_n^N}}\| u_n(t)\|_{L^k(\mathcal{O})}^kdt\Big)\leq C\Big(T,\mathbb{E}(\|u_n(0)\|_{L^2(\mathcal{O})}^2,\alpha,\gamma,d,k\Big).
\end{align}
Thus, this completes the proof of Lemma \ref{lem3}.
\end{proof}

\subsection{Global Strong Solution}
   \begin{thm}
   Let $d\ge 3$,  the constant $\alpha>1,\beta>1,\gamma\ge2$ and $ \alpha+1>\beta$. Under Assumption \ref{ass}, if \[\max\{3,\gamma,\alpha+1\}<\frac{2}{d+2}(\alpha-1+\beta),\]  then  
    for each $n\in \mathbb{N}$ and for every $T>0$, the equation  (\ref{equ2}) admits the pathwise  unique global nonnegative solution $u_n$ on $[0,T]$. There exists positive  constant $C(T)$, $C(T,k)$ independent of $n$   such that
    \begin{align}\label{a1}
\sup_{n\ge1}\mathbb{E}\Big(\sup_{t\in[0,T]}\|u_n(t)\|_{L^2(\mathcal{O})}^2\Big)\leq C(T),
   \\\label{a2}
\sup_{n\ge1}\mathbb{E}\Big(\int_{0}^{T}\|\nabla u_n(t)\|_{L^2(\mathcal{O})}^2dt\Big)\leq C(T),
  \\\label{a3}
\sup_{n\ge1}\mathbb{E}\Big(\int_{0}^{T}\|u_n(t)\|_{L^{\alpha+1}(\mathcal{O})}^{\alpha+1}\|u_n(t)\|_{L^{\beta}(\mathcal{O})}^{\beta}dt\Big)\leq C(T),\\\label{kk}
\sup_{n\ge1}\mathbb{E}\Big(\int_{0}^{T}\|u_n(t)\|_{L^{k}(\mathcal{O})}^{k}dt\Big)\leq C(T,k),
 \end{align}
  where  $k\in[1,\frac{2}{d+2}(\alpha-1+\beta)+2)$.
   \end{thm}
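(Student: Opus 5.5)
The plan is to combine the maximal solution $(u_n,\tau^*)$ constructed above with the uniform bounds of Lemma \ref{lem3}, and to show that $\tau^*\ge T$ almost surely; the estimates \eqref{a1}--\eqref{kk} then follow by letting $N\to\infty$. The hypothesis $\max\{3,\gamma,\alpha+1\}<\frac{2}{d+2}(\alpha-1+\beta)$ is stronger than \eqref{condition}, so Lemma \ref{lem3} applies. Since $u_n=u_n^N$ on $[0,\tau_n^N)$ and the constant $C(T)$ there does not depend on $n$ or $N$, we have, for every $N$,
\[
\mathbb{E}\Big(\sup_{t\in[0,T\wedge\tau_n^N]}\|u_n(t)\|_{L^2(\mathcal{O})}^2\Big)\le C(T).
\]
Nonnegativity of $u_n$ on $[0,\tau^*)$ is inherited from the nonnegativity of each $u_n^N$ in Theorem \ref{thm3.1}.

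\textbf{Step 1: no blow-up before $T$.} On the event $\{\tau_n^N<T\}$, path continuity gives $\|u_n(\tau_n^N)\|_{L^2(\mathcal{O})}=N$, provided $N>\|u_n(0)\|_{L^2(\mathcal{O})}$. (If $u_n(0)$ is not bounded, restrict first to $\{\|u_n(0)\|_{L^2(\mathcal{O})}\le R\}$, which is $\mathcal F_0$-measurable, and let $R\to\infty$ at the end.) Chebyshev's inequality then yields
\[
\mathbb{P}(\tau_n^N<T)\le \frac{1}{N^2}\,\mathbb{E}\Big(\sup_{t\in[0,T\wedge\tau_n^N]}\|u_n(t)\|_{L^2(\mathcal{O})}^2\Big)\le \frac{C(T)}{N^2}\to0 .
\]
Because $\tau_n^N\uparrow\tau^*$, we have $\{\tau^*<T\}\subset\{\tau_n^N<T\}$ for every $N$, so $\mathbb P(\tau^*<T)=0$. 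As $T$ is arbitrary, this gives $\tau^*=\infty$ almost surely, and $u_n$ is a global solution on $[0,T]$. Pathwise uniqueness follows from Theorem \ref{thm3.2} by the same localization argument.

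\textbf{Step 2: passing to the limit in the estimates.} Each of the quantities
\[
\sup_{t\in[0,T\wedge\tau_n^N]}\|u_n(t)\|_{L^2(\mathcal{O})}^2,\qquad \int_0^{T\wedge\tau_n^N}\|\nabla u_n(t)\|_{L^2(\mathcal{O})}^2\,dt,
\]
together with the $\int_0^{T\wedge\tau_n^N}$ integrals of the $L^{\alpha+1}$–$L^\beta$ product and of $\|u_n(t)\|_{L^k(\mathcal{O})}^k$, is nondecreasing in $N$ and nonnegative. By Step 1 we have $T\wedge\tau_n^N\uparrow T$ almost surely. The monotone convergence theorem then transfers the $N$-independent bounds of Lemma \ref{lem3} to \eqref{a1}--\eqref{kk}, and taking the supremum over $n$ is harmless because $C(T)$ and $C(T,k)$ are independent of $n$.

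\textbf{Main obstacle.} The only delicate point is justifying the Chebyshev step. It requires $\|u_n(\tau_n^N)\|_{L^2(\mathcal{O})}\ge N$ on $\{\tau_n^N<T\}$, which holds by continuity of $u_n^N$ in the finite-dimensional space $H_n$. It also requires that the bound of Lemma \ref{lem3} be genuinely uniform in $N$, which is the case because the cut-off never enters the estimate before $\tau_n^N$. Any dependence of the constant on $\mathbb{E}\|u_n(0)\|_{L^2(\mathcal{O})}^2$ is covered by the assumption $\sup_n\mathbb{E}\|u_n(0)\|_{L^2(\mathcal{O})}^2<\infty$.
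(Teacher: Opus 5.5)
Your proposal is correct and follows the paper's proof essentially step for step. You use Chebyshev's inequality with the $N$-independent bound of Lemma \ref{lem3} to get $\mathbb{P}(\tau_n^N\le T)\le C(T)/N^2$, hence $\tau^*=\infty$ a.s. because $\tau_n^N\uparrow\tau^*$, and then monotone convergence as $N\to\infty$ to obtain \eqref{a1}--\eqref{kk}. Your localization on $\{\|u_n(0)\|_{L^2(\mathcal{O})}\le R\}$ is unnecessary, as you yourself note at the end: the set $\{t:\|u_n^N(t)\|_{L^2(\mathcal{O})}\ge N\}$ is closed, so $\|u_n^N(\tau_n^N)\|_{L^2(\mathcal{O})}\ge N$ holds directly on $\{\tau_n^N<\infty\}$, which is exactly how the paper writes the Chebyshev step.
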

   \begin{proof} 
       By Chebyshev's inequality and  Lemma  \ref{lem3} and the definition of stopping time ${\tau_n^N}$, we know that 
       \begin{align*}
       \nonumber\mathbb{P}\Big\{{\tau_n^N}\leq T\Big\}&=\mathbb{P}\Big\{\sup_{t\in[0,T]}\|u_n({t\wedge{\tau_n^N}})\|_{L^2(\mathcal{O})}\ge N\Big\}\\&\leq \frac{1}{N^2}\mathbb{E}\Big(\sup_{t\in[0,T]}\|u_n({t\wedge{\tau_n^N}})\|_{L^2(\mathcal{O})}^2\Big)
\leq \frac{C(T)}{N^2}\to 0,~(N\to \infty).
       \end{align*}
       Since $\tau_n^N\uparrow\tau^*$, we have
	\[
	\{\tau^*\le T\}=\bigcap_{N\in\mathbb N}\{\tau_n^N\le T\}.
	\]
	Therefore, by continuity of probability,
	\[
	\mathbb P(\tau^*\le T)=\lim_{N\to\infty}\mathbb P(\tau_n^N\le T)=0.
	\]
	As $T>0$ is arbitrary, it follows that $\mathbb P(\tau^*<\infty)=0$, i.e. $\tau^*=\infty$ $\mathbb P$-a.s.
	Hence $u_n(t)$ is defined globally on $t\in [0,\infty)$.
	The nonnegativity and uniqueness follow from  Theorem~\ref{thm3.1} and Theorem~\ref{thm3.2}.
    
    Finally, taking the limit $N\to\infty$ and observing that $T\wedge{\tau_n^N}\to T$, by the monotone convergence theorem, we infer that there exists a positive  constant $C(T)$ independent of $n$ and $N$  such that the uniform estimates \eqref{a1}-\eqref{kk} hold true.
   \end{proof}

   \section{Global Martingale solution}
  In this section, we rely on uniform estimates for the Galerkin approximations to establish the tightness of the corresponding family of laws on a suitable non-metric topological space. We then apply Jakubowski's version of the Skorokhod theorem and use Vitali's convergence theorem to pass to the limit in the nonlinear terms, which allows us to construct a global martingale solution to the stochastic Keller-Segel system \eqref{equ1}.

   \subsection{Tightness of the sequence of the Galerkin solutions}
    In Appendix, we introduce a general  topological $(\mathcal{Z},\mathcal{T})$ and  establish the corresponding compactness criterion and tightness criterion. We now specify these abstract spaces to our setting. Let $s>\frac{d}{2}+1$, $2<k<\frac{2}{d+2}(\alpha-1+\beta)+2$, \[\mathbb{U}:=H^s(\mathcal{O}),\quad\mathbb H:=L^2(\mathcal{O}),\quad\mathbb V:=H^1(\mathcal{O}),\quad E:=L^k(\mathcal{O}).\]  We define   the space 
        \[
        Z_T:=C([0,T]; H^s(\mathcal{O})')\cap L^2_w(0,T; H^1(\mathcal{O}))\cap L^k(0,T; L^k(\mathcal{O}))\cap C([0,T]; L^2_w(\mathcal{O})),
        \]
        with the topology $\mathcal{T}$ which is the supremum of the corresponding topologies.

        \begin{lem}\label{lem4.1}
            The sequence of probability measures $\{\mathcal{L}(u_n):n\in\mathbb{N}\}$ is tight on $(Z_T,\mathcal{T})$, where $\mathcal{L}(u_n)$ denotes the law of  $u_n$ as a $Z_T$-valued random variable.
        \end{lem}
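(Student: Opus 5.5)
The plan is to apply the tightness criterion from the Appendix (the generalized version of the Brzeźniak–Motyl type criterion for $\mathcal Z = C([0,T];\mathbb U')\cap L^q_w(0,T;\mathbb V)\cap L^p(0,T;E)\cap C([0,T];\mathbb H_w)$). I expect it to require three ingredients:
\begin{itemize}
\item[(a)] $\sup_n\mathbb E\sup_{t\in[0,T]}\|u_n(t)\|_{L^2(\mathcal O)}^2<\infty$;
\item[(b)] $\sup_n\mathbb E\int_0^T\|u_n\|_{H^1(\mathcal O)}^2dt<\infty$, together with a uniform bound in $L^{\tilde k}(0,T;L^{\tilde k}(\mathcal O))$ for some $\tilde k>k$, so that the compact embedding $H^1\cap L^{\tilde k}\hookrightarrow L^k$ (or the interpolation version used in the Appendix) yields compactness in $L^k(0,T;L^k)$;
\item[(c)] the Aldous condition in $\mathbb U'=H^s(\mathcal O)'$.
\end{itemize}
Items (a) and (b) are read off directly from \eqref{a1}, \eqref{a2} and \eqref{kk}. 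Since $k<\frac{2}{d+2}(\alpha-1+\beta)+2$ is an open condition, I would pick $\tilde k\in(k,\frac{2}{d+2}(\alpha-1+\beta)+2)$ and use \eqref{kk} with $\tilde k$.

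The main work is the Aldous condition. Let $\tau_n$ be stopping times bounded by $T$ and $\delta\in(0,1)$. I would write
\[
u_n(\tau_n+\delta)-u_n(\tau_n)=\sum_{i=1}^5 J_n^i,
\]
where $J_n^1,\dots,J_n^4$ are the time integrals over $[\tau_n,\tau_n+\delta]$ of $\Pi_n\Delta u_n$, $-\Pi_n\nabla\cdot(u_n\nabla c_n)$, $\Pi_n u_n^\alpha$ and $-\Pi_n u_n^\alpha\int_{\mathcal O}u_n^\beta dx$, and $J_n^5$ is the stochastic integral. For each term the goal is an estimate $\mathbb E\|J_n^i\|_{H^{s}(\mathcal O)'}^{r_i}\le C\delta^{\eta_i}$ with $r_i\ge1$, $\eta_i>0$. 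Since $s>\frac d2+1$, we have $H^s\hookrightarrow W^{1,\infty}$, so each term can be tested against $\phi\in H^s$ with $\|\phi\|_{W^{1,\infty}}\le C\|\phi\|_{H^s}$. I also need that $\Pi_n$ is bounded on $H^s$ uniformly in $n$; this holds because it is a spectral projection of the Neumann Laplacian and the $H^s$ norm on its domain is equivalent to the spectral norm. If $s$ is not compatible with the Neumann compatibility conditions, I would instead take $\mathbb U=D((I-\Delta_N)^{s/2})$.

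The individual terms are handled as follows.
\begin{itemize}
\item $J^1_n$: $|\langle J^1_n,\phi\rangle|\le\delta^{1/2}(\int\|\nabla u_n\|_2^2)^{1/2}\|\phi\|_{H^s}$, and \eqref{a2} applies.
\item $J^2_n$: use $\|u_n\nabla c_n\|_{L^1}\le\|u_n\|_{L^2}\|\nabla c_n\|_{L^2}\le C\|u_n\|_{L^2}^2$ by \eqref{c}, so the bound is $\delta\,\mathbb E\sup_t\|u_n\|_2^2$.
\item $J^3_n$: bound by $\int\|u_n\|_{L^\alpha}^\alpha$ and apply H\"older in time against \eqref{kk} with $k=\alpha+1$, which is admissible by the hypothesis. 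This gives $\delta^{1/(\alpha+1)}$.
\item $J^4_n$: this is the delicate drift term. The natural bound is $\int_{\tau_n}^{\tau_n+\delta}\|u_n\|_{L^\alpha}^\alpha\|u_n\|_{L^\beta}^\beta dt$, which must yield a positive power of $\delta$. \eqref{a3} controls $\|u_n\|_{L^{\alpha+1}}^{\alpha+1}\|u_n\|_{L^\beta}^\beta$ in $L^1(\Omega\times(0,T))$ but gives no extra time integrability. My first attempt is to interpolate: by H\"older, $\|u_n\|_\alpha^\alpha\|u_n\|_\beta^\beta\le C(\|u_n\|_{\alpha+1}^{\alpha+1}\|u_n\|_\beta^\beta)^{\alpha/(\alpha+1)}\|u_n\|_\beta^{\beta/(\alpha+1)}$. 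Then H\"older in $(\omega,t)$ with exponents $\frac{\alpha+1}{\alpha}$ and $\alpha+1$ reduces the problem to $\mathbb E\int\|u_n\|_\beta^\beta dt$ over an interval of length $\delta$. Since $\beta<\alpha+1<\frac{2}{d+2}(\alpha-1+\beta)+2$, this is at most $C\delta^{\eta}$ by H\"older in time with \eqref{kk} at an exponent slightly above $\beta$. This yields $\mathbb E\|J_n^4\|_{(H^s)'}\le C\delta^{\eta/(\alpha+1)}$. I expect this to be the main obstacle. If the exponent bookkeeping fails, the fallback is to prove the Aldous condition in probability, $\mathbb P(\|J_n^4\|>\epsilon)\le\epsilon'$ uniformly in $n$ for small $\delta$, using uniform integrability of $\|u_n\|_\beta^\beta$ coming from \eqref{kk}; this weaker form is also accepted by the Appendix criterion.
\item $J^5_n$: by the It\^o isometry and \eqref{ass3}, $\mathbb E\|J_n^5\|_{L^2}^2\le C\,\mathbb E\int_{\tau_n}^{\tau_n+\delta}\|u_n\|_{L^\gamma}^\gamma dt$. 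Since $\gamma<\frac{2}{d+2}(\alpha-1+\beta)+2$, H\"older in time with \eqref{kk} at some exponent $k'>\gamma$ gives $C\delta^{1-\gamma/k'}$.
\end{itemize}

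Finally, Chebyshev's inequality turns these moment bounds into the Aldous condition. Combined with (a) and (b), the Appendix tightness criterion then gives tightness of $\{\mathcal L(u_n)\}$ on $(Z_T,\mathcal T)$.
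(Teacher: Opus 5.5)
Your proposal is correct and uses the paper's overall structure. Tightness comes from the Appendix criterion (Theorem~\ref{tight}), with \eqref{a1}, \eqref{a2}, \eqref{kk} supplying conditions (a)--(c). The Aldous condition in $H^s(\mathcal O)'$ is proved by splitting the increment and bounding each piece in mean by a positive power of $\delta$.

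The genuine difference is the nonlocal term, and here your route is the one that works. The paper bounds $\|u_n\|_{L^\alpha}^\alpha\|u_n\|_{L^\beta}^\beta\le C\|u_n\|_{L^{k_2}}^{\alpha+\beta}$ and applies \eqref{rk} with $\alpha+\beta<k_2<\frac{2}{d+2}(\alpha-1+\beta)+2$. That interval is empty under the standing hypotheses:
\begin{itemize}
\item $\alpha+\beta<\frac{2}{d+2}(\alpha+\beta-1)+2$ is equivalent to $\alpha+\beta<2+\frac{2}{d}\le\frac{8}{3}$;
\item whereas $3<\frac{2}{d+2}(\alpha-1+\beta)+2$ forces $\alpha+\beta>\frac{d+4}{2}\ge\frac{7}{2}$.
\end{itemize}
You instead factor
$\|u_n\|_{L^\alpha}^\alpha\|u_n\|_{L^\beta}^\beta\le C\bigl(\|u_n\|_{L^{\alpha+1}}^{\alpha+1}\|u_n\|_{L^\beta}^\beta\bigr)^{\frac{\alpha}{\alpha+1}}\bigl(\|u_n\|_{L^\beta}^\beta\bigr)^{\frac{1}{\alpha+1}}$
and apply H\"older on $\Omega\times[\tau_n,\tau_n+\delta]$. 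The first factor is then controlled by the nonlocal damping bound \eqref{a3}. The second is controlled by \eqref{kk} at an exponent just above $\beta$, which is admissible because $\beta<\alpha+1<\frac{2}{d+2}(\alpha-1+\beta)+2$. So your argument closes this step where the paper's does not, and your fallback (Aldous in probability) is unnecessary.

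Your other deviations are also sound:
\begin{itemize}
\item The chemotaxis term yields a full power of $\delta$ directly from \eqref{a1} and $\|\nabla c_n\|_{L^2}\le C\|u_n\|_{L^2}$.
\item You make explicit that condition (c) of the criterion needs an exponent $\tilde k>k$, which the paper leaves implicit.
\item You note that the uniform bounds $\|\Pi_n\phi\|_{L^\infty}+\|\nabla\Pi_n\phi\|_{L^\infty}\le C\|\phi\|_{\mathbb U}$ need Neumann-compatible test functions, i.e.\ $\mathbb U=D((I-\Delta_N)^{s/2})$ rather than all of $H^s(\mathcal O)$. The paper uses these bounds tacitly and imposes $\nabla\phi\cdot\nu=0$ only later, when identifying the limit.
\end{itemize}
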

        \begin{proof}
        We  apply Lemma \ref{tight} to prove $\{\mathcal{L}(u_n):n\in\mathbb{N}\}$ is tight on $(Z_T,\mathcal{T})$. 
              Combining the estimates (\ref{a1}), (\ref{a2}) and (\ref{kk}), which are corresponding conditions (a),(b),(c), it is sufficient to prove that the sequence $\{u_n:n\in\mathbb{N}\}$ satisfies the Aldous condition in $H^s(\mathcal{O})'$.  Let $t\in [0,T]$ and $\phi\in H^s(\mathcal{O})$, by (\ref{equ2}), we have 
            \begin{align}\label{b1}
    \nonumber\langle{u}_n(t),\phi\rangle=&\langle{u_n(0)},\phi\rangle+\int _0^t\Big\langle\Pi_n\Delta{u}_n(s),\phi\Big\rangle ds -  \int _0^t\Big\langle\Pi_n\nabla\cdot\big({u}_n(s)\nabla {c}_n(s)\big),\phi \Big\rangle ds \\\nonumber
    &+\int _0^t\Big\langle\Pi_n{u}_n^\alpha(s)(1-\int_\mathcal{O}{u}^{\beta}_n(s)dx),\phi \Big\rangle ds+\Big\langle\int _0^t\Pi_n\sigma({u}_n(s))d{W}(s),\phi \Big\rangle\\
:=&\langle u_n(0),\phi\rangle+\sum_{i=1}^{4}\big\langle J_i^n(t),\phi\big\rangle,
\end{align}
where $\langle \cdot,\cdot\rangle$ is the dual pairing between $ H^s(\mathcal{O})'$ and $ H^s(\mathcal{O})$.

 Next, we estimate $J_i,\,1\leq i\leq 4$ of (\ref{b1}) individually. 	Throughout the proof we repeatedly use the following elementary inequality. For any $1\le m<k<\frac{2}{d+2}(\alpha-1+\beta)+2$, by $L^k(\mathcal{O})\hookrightarrow L^m(\mathcal{O})$ and the  uniform bound (\ref{kk}),  we use the H\"older inequality in  time and the Jensen inequality in expectation to  infer that for any $t\in[0,T]$, 
 \begin{align}\label{rk}
 \nonumber&\sup_{n\ge 1}\mathbb E\Big(\int_0^t \|u_n(s)\|_{L^m(\mathcal O)}^mds\Big)
\leq C(m,k)\sup_{n\ge 1}\mathbb{E}\Big(\int_0^t\|u_n(s)\|_{L^k(\mathcal{O})}^mds\Big)\\\nonumber\leq &C(m,k)\sup_{n\ge 1}\mathbb{E}\Big[t^\frac{k-m}{k}\Big(\int_0^t\|u_n(s)\|_{L^k(\mathcal{O})}^kds\Big)^\frac{m}{k}\Big]\\\leq& C(m,k)t^\frac{k-m}{k}\sup_{n\ge 1}\Big[\mathbb{E}\Big(\int_0^t\|u_n(s)\|_{L^k(\mathcal{O})}^kds\Big)\Big]^\frac{m}{k}\le C(m,k)\, t^{\frac{k-m}{k}}.
 \end{align}

Let $({\tau_n})_{n\in\mathbb{N}}$ be a sequence of stopping times such that $0\leq{\tau_n}\leq T.$ Let $\theta>0$, using  the H$\ddot{\text{o}}$lder inequality, we have
 \begin{align*}
    &\Big|\Big\langle J_1^n({\tau_n}+\theta)-J_1^n({\tau_n}),\phi\Big\rangle \Big|= \Big|\int _{{\tau_n}}^{{\tau_n}+\theta}\Big\langle\nabla{u}_n(s),\nabla \Pi_n\phi\Big\rangle ds\Big|
     \\\leq&\int _{{\tau_n}}^{{\tau_n}+\theta}\|\nabla{u}_n(s)\|_{L^2(\mathcal{O})}\|\nabla\Pi_n \phi\|_{L^2(\mathcal{O})} ds
     \leq C\theta^\frac{1}{2}\|\nabla \phi\|_{L^2(\mathcal{O})} \Big(\int _{0}^{T}\|\nabla{u}_n(t)\|_{L^2(\mathcal{O})}^2dt\Big)^\frac{1}{2}.
 \end{align*}
By the definition of the $H^s(\mathcal{O})'$ norm and the Jensen inequality, we use the estimate (\ref{a2}) to  obtain 
 \begin{align}\label{b2}
     \nonumber& \mathbb{E}\Big(\|J_1^n({\tau_n}+\theta)-J_1^n({\tau_n})\|_{H^s(\mathcal{O})'}\Big)=\mathbb{E}\Big(\sup_{\|\phi\|_{H^s(\mathcal{O})}=1}\Big|\Big\langle J_1^n({\tau_n}+\theta)-J_1^n({\tau_n}),\phi\Big\rangle \Big|\Big)\\\leq & C\theta^\frac{1}{2}\mathbb{E}\Big[\Big(\int _{0}^{T}\|\nabla{u}_n(t)\|_{L^2(\mathcal{O})}^2dt\Big)^\frac{1}{2}\Big]\leq C_1\theta^\frac{1}{2}.
 \end{align}
 For $J_2^n(t)$. Using the embedding $ H^s(\mathcal{O})\hookrightarrow W^{1,\infty}(\mathcal{O})$, the H\"older inequality and (\ref{c}), we get
 \begin{align*}
    &\Big|\Big\langle J_2^n({\tau_n}+\theta)-J_2^n({\tau_n}),\phi\Big\rangle \Big|= \Big|\int _{{\tau_n}}^{{\tau_n}+\theta}\Big\langle{u}_n(s)\nabla {c}_n(s),\nabla  \Pi_n\phi \Big\rangle ds\Big|
     \\&\leq C\int _{{\tau_n}}^{{\tau_n}+\theta}\|u_n(s)\|_{L^2(\mathcal{O})}\|\nabla{c}_n(s)\|_{L^2(\mathcal{O})}\|\nabla \phi\|_{L^\infty(\mathcal{O})} ds
     \leq C\| \phi\|_{H^s(\mathcal{O})} \int _{{\tau_n}}^{{\tau_n}+\theta}\|{u}_n(t)\|_{L^2(\mathcal{O})}^2dt.
 \end{align*}
 Hence, by the estimate (\ref{rk}), taking $2<k_1<\frac{2}{d+2}(\alpha-1+\beta)+2$, we know 
 \begin{align}\label{b3}
      \mathbb{E}\Big(\|J_2^n({\tau_n}+\theta)-J_2^n({\tau_n})\|_{H^s(\mathcal{O})'}\Big)\leq  C\mathbb{E}\Big(\int _{{\tau_n}}^{{\tau_n}+\theta}\|{u}_n(t)\|_{L^2(\mathcal{O})}^2dt\Big)\leq C_2\theta^\frac{k_1-2}{k_1}.
 \end{align}
 In a similar way, we estimate $J_3^n$. Using  inequality (\ref{rk}) and taking  $\alpha+\beta<k_2<\frac{2}{d+2}(\alpha-1+\beta)+2$ such that
 \begin{align}\label{b4}
    \nonumber& \mathbb{E}\Big(\|J_3^n({\tau_n}+\theta)-J_3^n({\tau_n})\|_{H^s(\mathcal{O})'}\Big)\\\leq&\mathbb{E}\Big(\sup_{\|\phi\|_{H^s(\mathcal{O})}=1}\Big|\int _{{\tau_n}}^{{\tau_n}+\theta}(1-\int_\mathcal{O}{u}^{\beta}_n(s)dx)\Big\langle{u}_n^\alpha(s),\Pi_n\phi \Big\rangle  ds\Big|\Big)\\\nonumber
     \leq&C \mathbb{E}\Big(\sup_{\|\phi\|_{H^s(\mathcal{O})}=1}\int _{{\tau_n}}^{{\tau_n}+\theta}(1+\|u_n(s)\|_{L^\beta(\mathcal{O})}^\beta)\|u_n(s)\|_{L^\alpha(\mathcal{O})}^\alpha\|\phi\|_{L^\infty(\mathcal{O})} ds\Big)\\\nonumber
     \leq &C\mathbb{E}\Big(\int _{{\tau_n}}^{{\tau_n}+\theta}\|u_n(s)\|_{L^\alpha(\mathcal{O})}^\alpha+\|u_n(s)\|_{L^{k_2}(\mathcal{O})}^{\alpha+\beta} ds\Big)\\
     \leq &C \theta^\frac{k_2-\alpha}{k_2}+C \theta^\frac{k_2-\alpha-\beta}{k_2}\leq C_3\theta^\frac{k_2-\alpha-\beta}{k_2}.
 \end{align}
 For the last term $J_4^n$,  using the Burkholder-Davis-Gundy inequality and  taking $\gamma<k_3<\frac{2}{d+2}(\alpha-1+\beta)+2$, by \eqref{ass3} in Assumption \ref{ass}, we obtain
 \begin{align}\label{b5}
    \nonumber &\mathbb{E}\Big(\|J_4^n({\tau_n}+\theta)-J_4^n({\tau_n})\|_{H^s(\mathcal{O})'}\Big)\leq\mathbb{E}\Big(\sup_{\|\phi\|_{H^s(\mathcal{O})}=1}\Big|\Big\langle\int _{{\tau_n}}^{{\tau_n}+\theta}\Pi_n\sigma({u}_n(s))d{W}(s),\phi \Big\rangle\Big|^2\Big)\\\nonumber\leq &C\mathbb{E}\Big(\big\|\int _{{\tau_n}}^{{\tau_n}+\theta}\Pi_n\sigma({u}_n(s))d{W}(s)\big\|_{L^2(\mathcal{O})}^2\Big)
     \leq C \mathbb{E}\Big(\int _{{\tau_n}}^{{\tau_n}+\theta}\|\sigma(u_n(s))\|_{L_2(U_0,L^2(\mathcal{O}))}^2 ds\Big)
     \\\leq& C \mathbb{E}\Big(\int _{{\tau_n}}^{{\tau_n}+\theta}\|u_n(s)\|_{L^\gamma(\mathcal{O})}^\gamma ds\Big)\leq C_4\theta^\frac{k_3-\gamma}{2k_3}.
 \end{align}
  Let $\eta>0$ and $\varepsilon>0$. By the Chebyshev inequality and  estimates (\ref{b2}), (\ref{b3}), (\ref{b4}) and (\ref{b5}) we obtain 
 \begin{align}
     \nonumber&\mathbb{P}\Big(\{\|J_i^n({\tau_n}+\theta)-J_i^n({\tau_n})\|_{H^s(\mathcal{O})'}\ge\eta\}\Big)\leq \frac{1}{\eta}  \mathbb{E}\Big(\|J_i^n({\tau_n}+\theta)-J_i^n({\tau_n})\|_{H^s(\mathcal{O})'}\Big)\leq C_i\frac{\theta^{\rho_i}}{\eta},
 \end{align}
 where $\rho_1=\frac12$, $\rho_2=\frac{k_1-2}{k_1}$, $\rho_3=\frac{k_2-\alpha-\beta}{k_2}$, $\rho_4=\frac{k_3-\gamma}{2k_3}$.
	Choosing $
	\delta:=\min_{1\le i\le 4}\Big(\frac{\eta\varepsilon}{4C_i}\Big)^{1/\rho_i},
	$
 we infer 
 \begin{align}
     \sup_{n\in\mathbb{N}} \sup_{0<\theta<\delta} \mathbb{P}\Big(\{\|J_i^n({\tau_n}+\theta)-J_i^n({\tau_n})\|_{H^s(\mathcal{O})'}\ge\eta\}\Big)\leq\frac{\varepsilon}{4}.
 \end{align}
 This shows that the Aldous condition holds for each term $J_i^n,~1\leq i\leq 4.$ Therefore, combining estimates (\ref{a1}) and (\ref{a2}), we infer that the sequence $\{u_n:n\in\mathbb{N}\}$ satisfies the tightness criterion. This completes the proof.
        \end{proof}
        
\subsection{Application of the Jakubowski-Skorokhod Theorem}

      Let $W_n:=W$ for all $n\in\mathbb{N},$ be a $U$-valued $Q$-Wiener process, the set $\{\mathcal{L}(W_n):n\in\mathbb{N}\}$ is tight on $C([0,T];U)$.  Thus the set  $\{\mathcal{L}\big((u_n,W_n)\big):n\in\mathbb{N}\}$ is tight on $Z_T\times C([0,T];U).$  By the Jakubowski's version of  the Skorokhod Theorem for non-metric space in \cite[ Corollary 3.12]{brzezniak2013existence} and \cite[Corollary 13]{dhariwal2019global}, there exists a subsequence $\{(u_n,W_n):n\in\mathbb{N}\}$, which is not relabeled, a probability space $(\tilde{\Omega} ,\tilde{\mathcal{F}},\tilde{\mathbb{P}}  )$, and  $Z_T\times C([0,T];U)$-valued random variables $(\tilde{u},\tilde{W})$ and $(\tilde{u}_n,\tilde{W}_n)$ with $n\in\mathbb{N}$ such that 
        	\[
	\mathcal L\big((\tilde u_n,\tilde W_n)\big)=\mathcal L\big((u_n,W_n)\big)
	\quad\text{on }\mathcal B\big(Z_T\times C([0,T];U)\big),
	\]and
        \[
        (\tilde{u}_n,\tilde{W}_n)\to (\tilde{u},\tilde{W})~\text{in} ~Z_T\times C([0,T];U),~\tilde{\mathbb{P}}\text{-a.s.}\,~\text{as}~n\to\infty.
        \]
        Note that since $\mathcal{B}(Z_T)\otimes\mathcal{B}(C([0,T];U))\subset\mathcal{B}(Z_T\times C([0,T];U))$, $\tilde u_n$ and $\tilde u$ are $Z_T$-valued Borel random variables.
        
        	Let $\{\tilde{\mathcal F}_t\}_{t\in[0,T]}$ be a complete and right-continuous   filtration
	generated with $(\tilde u,\tilde W)$.
Denote $\tilde{\mathfrak{A}}:= (\tilde{\Omega} ,\tilde{\mathcal{F}}, \{\tilde{\mathcal{F}}_{t}\} _{t\ge 0}  ,\tilde{\mathbb{P}}  )$ as a stochastic basis. Similarly, based on $(\tilde{u}_n,\tilde{W}_n)$, we denote $\tilde{\mathfrak{A}}^n:= (\tilde{\Omega} ,\tilde{\mathcal{F}}, \{\tilde{\mathcal{F}}_{t}^n\} _{t\ge 0},\tilde{\mathbb{P}}  )$. Furthermore,  by \cite[ Lemma 5.7]{fischer2018existence}, the process $\tilde{W}_n$ and $\tilde{W}$ are $Q$-Wiener processes adapted to the filtration $\{\tilde{\mathcal{F}}_{t}^n\} _{t\ge 0} $ and $\{\tilde{\mathcal{F}}_{t}\} _{t\ge 0} $, respectively. They can be written as \[
\tilde{W}_n(t)=\sum_{l=1}^{\infty}\sqrt{\lambda_l}e_l\tilde{W}_l^n(t),~\tilde{W}(t)=\sum_{l=1}^{\infty}\sqrt{\lambda_l}e_l\tilde{W}_l(t),
\] 
where $\{\tilde W_l^n\}_{l\in\mathbb N}$ and $\{\tilde W_l\}_{l\in\mathbb N}$ are independent standard
	one-dimensional Brownian motions on $\tilde{\mathfrak A}^n$ and $\tilde{\mathfrak A}$, respectively.
    
    The space   $C([0,T];H_n)$ is a Borel subset of
	$Z_T$ and  since $u_n\in C([0,T];H_n)$ $\mathbb P$-a.s. and  $\tilde{u}_n$ and ${u}_n$ have the same law on $Z_T$,  we have   
        \[
        \mathcal{L}(\tilde{u}_n)\big(C([0,T];H_n)\big)=1,\quad\forall n\ge1.
        \]

        Since $L_{+}^2(\mathcal{O}):=\{v\in L^2(\mathcal{O}):v(x)\ge 0\, a.e.\,\text{in}\,\mathcal{O}\}$ is a closed subset  in $L_w^2(\mathcal{O})$,  then  \[A:=\big\{v\in C([0,T];L_w^2(\mathcal{O})):v(t)\in L_{+}^2(\mathcal{O}),\,\forall t\in[0,T]\big\}\in \mathcal B(Z_T).\] 
    	As $\tilde{u}_n$ and ${u}_n$ have the same law on $Z_T$, we have 
	\[
	\mathbb P\big(u_n(t)\ge0,\,\forall t\in[0,T]\big)=\tilde{\mathbb P}\big(\tilde u_n(t)\ge0,\,\forall t\in[0,T]\big)=1.
	\]
    Furthermore, $\tilde{u}_n\to \tilde{u}$ in $C([0,T];L_w^2(\mathcal{O}))\,\mathbb{P}$-a.s. yields
    \[
	\tilde{\mathbb P}\Big(\tilde u(t)\ge0,\ \forall t\in[0,T]\Big)=1.
	\]  
    Since  $\tilde{u}_n$ and ${u}_n$ have the same law on $Z_T$, under the same assumptions of Theorem~\ref{thm1}, there exist constants $C(T)$ and $C(T,k)$ such that 
        \begin{align}\label{a12}
\sup_{n\ge1}\tilde{\mathbb{E}}\Big(\sup_{t\in[0,T]}\|\tilde{u}_n(t)\|_{L^2(\mathcal{O})}^2\Big)\leq C(T),\\\label{a13}
\sup_{n\ge1}\tilde{\mathbb{E}}\Big(\int_{0}^T\|\nabla \tilde{u}_n(t)\|_{L^2(\mathcal{O})}^2dt\Big)\leq C(T),\\\label{c1}
\sup_{n\ge1}\tilde{\mathbb{E}}\Big(\int_{0}^T\|\tilde{u}_n(t)\|_{L^k(\mathcal{O})}^kdt\Big)\leq C(T,k),
    \end{align}
    where
	$k\in\big(2,\frac{2}{d+2}(\alpha-1+\beta)+2\big)$.
    By $\tilde u_n\to\tilde u$ in $L^k(0,T;L^k(\mathcal O))$ $\tilde{\mathbb P}\text{-a.s.}$, we use  Fatou's lemma and \eqref{c1} to obtain 
	\begin{align}\label{kkk}
		\tilde{\mathbb E}\Big(\int_0^T\|\tilde u(t)\|_{L^k(\mathcal O)}^kdt\Big)
		\le \liminf_{n\to\infty}\tilde{\mathbb E}\Big(\int_0^T\|\tilde u_n(t)\|_{L^k(\mathcal O)}^kdt\Big)
		\le C(T,k).
	\end{align}
	Similarly, by \eqref{a12},\eqref{a13} and Fatou's lemma, we have 
	\begin{align}\label{e3}
		\tilde{\mathbb E}\Big(\sup_{t\in[0,T]}\|\tilde u(t)\|_{L^2(\mathcal O)}^2\Big)
		\le C(T),\\\label{e4}
		\tilde{\mathbb{E}}\Big(\int_{0}^T\|\nabla \tilde{u}(t)\|_{L^2(\mathcal{O})}^2dt\Big)
		\le C(T).
	\end{align}
\subsection{The proof of the Theorem \ref{thm1}}
 Based on the previous analysis of the  Galerkin approximate equation (\ref{equ2}) and tightness criterion, we will show that the limit $(\tilde{u},\tilde{W})$ is satisfied with  equation (\ref{equ1}).
    Let $\phi\in H^s(\mathcal{O}),\,s>\frac{d}{2}+1$ satisfying $\nabla\phi\cdot\nu=0$ on $\partial\mathcal{O}$, for $t\in[0,T]$, we define 
 \begin{align*}
     &\Lambda _n (\tilde{u}_n,\tilde{W}_n,\phi)(t)\\
     :=&(\tilde{u}_n(0),\phi)_{L^2(\mathcal{O})}+\int _0^t\Big \langle\Pi_n\Delta\tilde{u}_n(s),\phi \Big \rangle ds-  \int _0^t\Big \langle\Pi_n\nabla\cdot\big(\tilde{u}_n(s)\nabla \tilde{c}_n(s)\big),  \phi\Big \rangle ds
   \\& +\int _0^t\Big \langle \Pi_n\tilde{u}_n^\alpha(s)\big(1-\int_\mathcal{O}\tilde{u}_n^{\beta}(s)dx\big),\phi \Big \rangle ds+\Big(\int _0^t\Pi_n\sigma(\tilde{u}_n(s))d\tilde{W}_n(s),\phi \Big)_{L^2(\mathcal{O})}\\=&(\tilde{u}_n(0),\phi)_{L^2(\mathcal{O})}+\int _0^t\Big \langle\Delta\tilde{u}_n(s),\phi \Big \rangle ds+  \int _0^t\Big \langle\tilde{u}_n(s)\nabla \tilde{c}_n(s),  \nabla\Pi_n\phi\Big \rangle ds
   \\& +\int _0^t\Big \langle \tilde{u}_n^\alpha(s)\big(1-\int_\mathcal{O}\tilde{u}_n^{\beta}(s)dx\big),\Pi_n\phi \Big \rangle ds+\Big(\int _0^t\Pi_n\sigma(\tilde{u}_n(s))d\tilde{W}_n(s),\phi \Big)_{L^2(\mathcal{O})}
 \end{align*}
 \begin{align*}
     \Lambda (\tilde{u},\tilde{W},\phi)(t):=&(\tilde{u}(0),\phi)_{L^2(\mathcal{O})}+\int _0^t\Big \langle\Delta\tilde{u}(s),\phi \Big \rangle ds+  \int _0^t\Big \langle\tilde{u}(s)\nabla \tilde{c}(s),\nabla \phi\Big \rangle ds
    \\&+\int _0^t\Big \langle \tilde{u}^\alpha(s)\big(1-\int_\mathcal{O}\tilde{u}^{\beta}(s)dx\big),\phi \Big \rangle ds+\Big(\int _0^t\sigma(\tilde{u}(s))d\tilde{W}(s),\phi \Big)_{L^2(\mathcal{O})}.
 \end{align*}

Next, we focus on proving the following two convergences.
 \begin{lem} \label{lem4.2}
    It holds that
    \begin{align}
    &\label{lim_1}\lim_{n\to\infty}\int_0^T\tilde{\mathbb{E}}\Big| (\tilde{u}_n(t),\phi)_{L^2(\mathcal{O})}-(\tilde{u}(t),\phi)\Big|dt=0,\\\label{lim_2}
&\lim_{n\to\infty}\int_0^T\tilde{\mathbb{E}}\Big|\Lambda _n (\tilde{u}_n,\tilde{W}_n,\phi)(t)-\Lambda (\tilde{u},\tilde{W},\phi)(t)\Big|dt=0.
     \end{align}
 \end{lem}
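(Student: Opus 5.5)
The plan is to use the $\tilde{\mathbb P}$-a.s. convergence $\tilde u_n\to\tilde u$ in $Z_T$ and $\tilde W_n\to\tilde W$ in $C([0,T];U)$ from the Skorokhod step, together with the uniform bounds \eqref{a12}--\eqref{c1}. Each term will be handled by a Vitali argument on $\tilde\Omega\times[0,T]$: pointwise convergence plus a uniform bound in $L^{p}$ for some $p>1$.

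For \eqref{lim_1}, convergence in $C([0,T];L^2_w(\mathcal O))$ gives $(\tilde u_n(t),\phi)\to(\tilde u(t),\phi)$ for every $t$, $\tilde{\mathbb P}$-a.s. Moreover,
\[
\tilde{\mathbb E}\int_0^T|(\tilde u_n(t),\phi)|^2dt\le T\|\phi\|_{L^2(\mathcal O)}^2\sup_n\tilde{\mathbb E}\sup_t\|\tilde u_n(t)\|_{L^2(\mathcal O)}^2\le C
\]
by \eqref{a12}. Vitali's theorem then gives \eqref{lim_1}.

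For \eqref{lim_2}, I will split $\Lambda_n-\Lambda$ into the initial, diffusion, chemotaxis, reaction and stochastic terms.

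\textbf{Initial and diffusion terms.} The initial term converges because $\tilde u_n(0)\to\tilde u(0)$ weakly in $L^2(\mathcal O)$. For the diffusion term, since $\nabla\phi\cdot\nu=0$, I write $\langle\Delta\tilde u_n,\phi\rangle=(\tilde u_n,\Delta\phi)$ and treat it like \eqref{lim_1}, using weak convergence in $L^2(0,T;H^1(\mathcal O))$.

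\textbf{Chemotaxis term.} I write
\[
\tilde u_n\nabla\tilde c_n\cdot\nabla\Pi_n\phi-\tilde u\nabla\tilde c\cdot\nabla\phi=(\tilde u_n\nabla\tilde c_n-\tilde u\nabla\tilde c)\cdot\nabla\phi+\tilde u_n\nabla\tilde c_n\cdot\nabla(\Pi_n\phi-\phi).
\]
The first piece goes to zero a.s. because $\tilde u_n\to\tilde u$ strongly in $L^k(0,T;L^k)$ with $k>2$, and \eqref{c} gives $\nabla\tilde c_n\to\nabla\tilde c$ in $L^k(0,T;W^{1,k})$; H\"older's inequality finishes. The second piece goes to zero because $\Pi_n\phi\to\phi$ in $H^1(\mathcal O)$, since $\phi$ lies in the Neumann form domain. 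Uniform integrability comes from bounding the $L^{k/2}(\tilde\Omega\times[0,T])$ norm by $\tilde{\mathbb E}\int\|\tilde u_n\|_{L^k}^k$, using \eqref{c1}.

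\textbf{Reaction term.} The difference splits into $\int\tilde u_n^\alpha\phi\to\int\tilde u^\alpha\phi$ and $\int\tilde u_n^\beta\to\int\tilde u^\beta$. Both follow from strong $L^k$ convergence with $k>\alpha+\beta$ (which \eqref{condition} permits), plus an error term involving $\Pi_n\phi-\phi$ in $L^\infty$. For that error term I use $\|\Pi_n\phi-\phi\|_{L^2}\to0$ together with the $L^{k}$ bounds via H\"older, taking $k\ge 2\alpha$ if available. Otherwise I use $\|\Pi_n\phi-\phi\|_{L^{k'}}\to0$, which holds by $H^s\hookrightarrow L^\infty$ and interpolation of the uniformly bounded $\|\Pi_n\phi\|_{H^s}$, assuming $\phi$ is smooth enough that $\Pi_n$ is bounded on $H^s$. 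The product is of degree $\alpha+\beta<k$, so \eqref{c1} gives an $L^{p}$ bound with $p=k/(\alpha+\beta)>1$.

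\textbf{Stochastic term (main obstacle).} Here the noise also changes with $n$. I will first show
\[
\int_0^T\|\Pi_n\sigma(\tilde u_n)-\sigma(\tilde u)\|_{L_2(U_0,L^2)}^2dt\to0
\]
in probability. By \eqref{ass4} and H\"older in time this is bounded by
\[
\Big(\int_0^T(\|\tilde u_n\|_{L^\gamma}^{\gamma}+\|\tilde u\|_{L^\gamma}^{\gamma})\Big)^{(\gamma-2)/\gamma}\Big(\int_0^T\|\tilde u_n-\tilde u\|_{L^\gamma}^\gamma\Big)^{2/\gamma},
\]
which tends to zero a.s. since $\gamma<k$. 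The remaining piece $\|(\Pi_n-I)\sigma(\tilde u)\|$ is handled by dominated convergence. Combined with $\tilde W_n\to\tilde W$ in $C([0,T];U)$, the standard convergence lemma for stochastic integrals (e.g.\ Debussche--Glatt-Holtz--Temam, as used in the cited works) gives convergence of $\int_0^t\Pi_n\sigma(\tilde u_n)d\tilde W_n$ to $\int_0^t\sigma(\tilde u)d\tilde W$ in $L^2(0,T;L^2)$ in probability. The It\^o isometry and \eqref{c1} with $k>\gamma$ then give uniform $L^{p}$ bounds with $p>1$, upgrading this to convergence in $L^1(\tilde\Omega\times[0,T])$.

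Summing the five contributions yields \eqref{lim_2}.
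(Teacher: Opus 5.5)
Your decomposition, and your handling of the initial, diffusion, chemotaxis and noise terms, follow the paper's proof. The reaction step, however, fails. You assert that \eqref{condition} permits some $k>\alpha+\beta$ in \eqref{c1}, and it never does. Indeed
\[
\frac{2}{d+2}(\alpha-1+\beta)+2-(\alpha+\beta)=\frac{2d+2-d(\alpha+\beta)}{d+2}<0\quad\text{whenever } \alpha+\beta>2+\frac{2}{d},
\]
while the requirement $3<\frac{2}{d+2}(\alpha-1+\beta)+2$ contained in \eqref{condition} already forces $\alpha+\beta>\frac{d+4}{2}\ge\frac72$. Hence you have neither $\tilde u_n\to\tilde u$ in $L^{\alpha+\beta}(0,T;L^{\alpha+\beta}(\mathcal O))$ nor a uniform $L^{k/(\alpha+\beta)}$ bound. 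Both halves of your Vitali argument therefore break down for the degree-$(\alpha+\beta)$ quantity $\langle\tilde u_n^\alpha,\phi\rangle\int_{\mathcal O}\tilde u_n^\beta dx$, and for its $\Pi_n\phi-\phi$ error. Pathwise, strong convergence in $L^k$ with $k<\alpha+\beta$ only gives $\langle\tilde u_n^\alpha,\phi\rangle\to\langle\tilde u^\alpha,\phi\rangle$ in $L^{k/\alpha}(0,T)$ and $\int_{\mathcal O}\tilde u_n^\beta dx\to\int_{\mathcal O}\tilde u^\beta dx$ in $L^{k/\beta}(0,T)$. The product of these need not converge in $L^1(0,T)$. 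The paper's proof makes exactly the same choice ($r\in(\alpha+\beta,\frac{2}{d+2}(\alpha-1+\beta)+2)$ in \eqref{f7}, and likewise $k_2$ in \eqref{b4}). So the gap is inherited from the paper, but it is still a gap: that interval is empty.

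The missing ingredient is the product estimate \eqref{a3}, of degree $\alpha+1+\beta$, which is exactly what the nonlocal damping supplies. Transfer it to $\tilde u_n$ by equality of laws. To do so, choose $k\ge\max\{\alpha+1,\gamma\}$ in $Z_T$, which is admissible; then $v\mapsto\int_0^T\|v\|_{L^{\alpha+1}}^{\alpha+1}\|v\|_{L^\beta}^\beta\,dt$ is lower semicontinuous, hence Borel, on $Z_T$. Set $f_n(s):=\langle\tilde u_n^\alpha(s),\phi\rangle\big(1-\int_{\mathcal O}\tilde u_n^\beta(s)dx\big)$ and use
\[
\|v\|_{L^{\alpha}}^{\alpha}\|v\|_{L^\beta}^\beta\le C\big(\|v\|_{L^{\alpha+1}}^{\alpha+1}\|v\|_{L^\beta}^\beta\big)^{\frac{\alpha}{\alpha+1}}\|v\|_{L^\beta}^{\frac{\beta}{\alpha+1}}.
\]
H\"older's inequality on $\tilde\Omega\times[0,T]$ with $1<p<\frac{\alpha+1}{\alpha}$ then yields $\sup_n\tilde{\mathbb E}\int_0^T|f_n|^p\,ds<\infty$ from \eqref{a3} and \eqref{c1}. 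The exponent needed in \eqref{c1} is $\frac{\beta p}{\alpha+1-\alpha p}$, which tends to $\beta<\alpha+1$ as $p\downarrow1$ and is therefore admissible.

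Next, apply Vitali on $\tilde\Omega\times[0,T]$ to the integrand $f_n$ itself, not to $\int_0^t f_n\,ds$ with pathwise convergence; the latter is not available from the bounds at hand. We have $f_n\to f$ in $\tilde{\mathbb P}\otimes dt$-measure, because $\tilde u_n\to\tilde u$ in $L^k(0,T;L^k(\mathcal O))$ a.s. with $k\ge\alpha+1>\beta$. Hence $f_n\to f$ in $L^1(\tilde\Omega\times[0,T])$, which controls $\tilde{\mathbb E}\sup_t|\int_0^t(f_n-f)\,ds|$.

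For the projection error, use $|\langle\tilde u_n^\alpha,\Pi_n\phi-\phi\rangle|\le\|\tilde u_n\|_{L^{\alpha+1}}^\alpha\|\Pi_n\phi-\phi\|_{L^{\alpha+1}}$. Here $\|\Pi_n\phi-\phi\|_{L^{\alpha+1}}\le C\|\Pi_n\phi-\phi\|_{H^1}\to0$, since \eqref{condition} and $\beta<\alpha+1$ give $\alpha<\frac{d+2}{d-2}$, i.e. $\alpha+1<\frac{2d}{d-2}$. Combine this with the same \eqref{a3}-based bound taken with $p=1$.
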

 \begin{proof} 
 By the convergence
$\tilde u_n\to \tilde u$ in $Z_T$ and  the uniform estimates \eqref{a12}-\eqref{e4},
we will apply Vitali's convergence theorem to prove. This approach is motivated by  \cite[Lemma 16]{dhariwal2019global} and \cite[Lemma 2.1]{debussche2011local}.
     For (\ref{lim_1}). Since
     $\tilde{u}_n\to\tilde{u}~\text{ in}~C([0,T];L_w^2(\mathcal{O}))\,\tilde{\mathbb{P}}\,\text{-a.s.}, 
     $  it holds that for any $t\in[0,T]$,
     \begin{align*}
         \lim_{n\to\infty}(\tilde{u}_n(t),\phi)_{L^2(\mathcal{O})}=(\tilde{u}(t),\phi)_{L^2(\mathcal{O})}\,\tilde{\mathbb P}\text{-a.s.}
     \end{align*}
     	Then, by Cauchy-Schwarz and \eqref{a12},\eqref{e3}, we have
        \begin{align*}
            &\sup_{n\ge1}\tilde{\mathbb E}\big|(\tilde u_n(t)-\tilde u(t),\phi)_{L^2(\mathcal{O})}\big|^2
		\\\le&C \|\phi\|_{L^2(\mathcal{O})}^2 \sup_{n\ge1}\tilde{\mathbb E}\big(\|\tilde u_n(t)\|_{L^2(\mathcal {O})}^2+\|\tilde u(t)\|_{L^2(\mathcal {O})}^2\big)
		\le C(T)\|\phi\|_{L^2(\mathcal{O})}^2.
        \end{align*}
		Hence, $(\tilde u_n(t)-\tilde u(t),\phi)_{L^2(\mathcal{O})}$ is uniformly integrable. Then, Vitali's theorem yields \[\tilde{\mathbb E}\big|(\tilde u_n(t)-\tilde u(t),\phi)_{L^2(\mathcal{O})}\big|\to0,\quad n\to \infty.\]
        Using the dominated convergence theorem in time integration yields \eqref{lim_1} holds.
        
        Next, we use the same way to verify \eqref{lim_2} holds. 
     \begin{align*}
         &\int_0^T\tilde{\mathbb{E}}\Big|\Lambda _n (\tilde{u}_n,\tilde{W}_n,\phi,\phi)(t)-\Lambda (\tilde{u},\tilde{W},\phi,\phi)(t)\Big|dt\\
    \leq&T\tilde{\mathbb{E}}\Big|\big(\tilde{u}_n(0)-\tilde{u}(0),\phi\big)_{L^2(\mathcal{O})}\Big|+ \tilde{\mathbb{E}}\int_0^T\Big|\int_0^t\Big \langle\nabla\tilde{u}_n(s)-\nabla\tilde{u}(s),\nabla\phi\Big \rangle ds\Big|dt\\&+\tilde{\mathbb{E}}\int_0^T\Big| \int_0^t\Big \langle\tilde{u}_n(s)\nabla\tilde{c}_n(s)-\tilde{u}(s)\nabla\tilde{c}(s),\nabla\phi\Big \rangle ds\Big|dt\\&+\tilde{\mathbb{E}}\int_0^T\Big| \int_0^t\Big \langle\tilde{u}_n(s)\nabla\tilde{c}_n(s),\nabla\Pi_n\phi-\nabla\phi\Big \rangle ds\Big|dt\\&+\tilde{\mathbb{E}}\int_0^T\Big|\int_0^t\left \langle\tilde{u}_n^\alpha(s)\big(1-\int_\mathcal{O}\tilde{u}_n^{\beta}(s)dx\big)-\tilde{u}^\alpha(s)\big(1-\int_\mathcal{O}\tilde{u}^{\beta}(s)dx\big),\phi\right \rangle ds\Big|dt\\&+\tilde{\mathbb{E}}\int_0^T\Big|\int_0^t\left \langle\tilde{u}_n^\alpha(s)\big(1-\int_\mathcal{O}\tilde{u}_n^{\beta}(s)dx\big),\Pi_n\phi-\phi\right \rangle ds\Big|dt\\&+\tilde{\mathbb{E}}\int_0^T\Big|\int_0^t\Big (\Pi_n\sigma(\tilde{u}_n(s))d\tilde{W}_n(s)-\sigma(\tilde{u}(s))d\tilde{W}(s),\phi\Big)_{L^2(\mathcal{O})}\Big|dt\\&:=\sum_{i=1}^{7}\tilde{\mathbb{E}}\int_0^T\big|I_i(t)\big|dt.
     \end{align*} It remains to show each term separately.  By \eqref{lim_1}, it is easy to obtain $\tilde{\mathbb{E}}\displaystyle\int_0^T\big|I_1(t)\big|dt\to0$.\\
        For $\tilde{\mathbb{E}}\displaystyle\int_0^T\big|I_2(t)\big|dt$.
     Since $\tilde{u}_n\to\tilde{u}~\text{weakly~ in}~L^2(0,T;H^1(\mathcal{O}))~\tilde{\mathbb{P}}\text{-a.s.} ,$
     which means that for every $t\in[0,T]$ and $\phi\in H^s(\mathcal{O})$, it holds
     \begin{align}\label{a22}
          \lim_{n\to\infty}\int_0^t\Big \langle \nabla\tilde{u}_n(s)-\nabla\tilde{u}(s),\nabla\phi\Big \rangle ds=0,~ \tilde{\mathbb{P}}\text{-a.s.}
     \end{align}
    Using the H\"older inequality in time and (\ref{a13}),  we obtain  
    \begin{align*}
        \sup_{n\ge 1}\tilde{\mathbb{E}}\Big(\Big|\int_0^t\Big \langle\nabla\tilde{u}_n(s),\nabla\phi\Big \rangle ds\Big|^2\Big)&\leq \|\nabla\phi\|_{L^2(\mathcal{O})}^2\sup_{n\ge 1}\tilde{\mathbb{E}}\Big(\Big|\int_0^t\|\nabla\tilde{u}_n(s)\|_{L^2(\mathcal{O})} ds\Big|^2\Big)\\&\leq T\sup_{n\ge 1}\tilde{\mathbb{E}}\Big(\int_0^t\|\nabla\tilde{u}_n(s)\|_{L^2(\mathcal{O})}^2 ds\Big)\leq C(T).
    \end{align*}
    Similarly, by (\ref{e4}), we know $\sup_{n\ge 1}\tilde{\mathbb{E}}\Big(\Big|\displaystyle\int_0^t\big \langle\nabla\tilde{u}(s),\nabla\phi\big \rangle ds\Big|^2\Big)\leq C(T).$
    Thus, $|I_2(t)|$ is uniformly integrable.   Applying Vitali's theorem  infers  
    \begin{align*}
        \tilde{\mathbb{E}}\Big|\int_0^t\Big \langle\nabla\tilde{u}_n(s)-\nabla\tilde{u}(s),\nabla\phi\Big \rangle ds\Big|\to 0,~n\to \infty.
    \end{align*}
    Furthermore, we use the dominated convergence theorem in time integration to get  \[\lim_{n\to \infty}\tilde{\mathbb{E}}\int_0^T\big|I_2(t)\big|dt=0.\] 
    For $\tilde{\mathbb{E}}\displaystyle\int_0^T\big|I_3(t)\big|dt$. By the H\"older inequality, (\ref{c}) and the continuous embedding $H^s(\mathcal{O})\hookrightarrow W^{1,\infty}(\mathcal{O})$, we have
    \begin{align*}
     &\Big|\int_0^t\Big \langle\tilde{u}_n(s)\nabla\tilde{c}_n(s)-\tilde{u}(s)\nabla\tilde{c}(s),\nabla\phi\Big \rangle ds\Big|\\
     \leq& \Big|\int_0^t\Big \langle\big(\tilde{u}_n(s)-\tilde{u}(s)\big)\nabla\tilde{c}_n(s)+\tilde{u}(s)\big(\nabla\tilde{c}_n(s)-\nabla\tilde{c}(s)\big),\nabla\phi\Big \rangle ds\Big|\\
     \leq& \|\nabla\phi\|_{L^\infty(\mathcal{O})}\int_0^t \|\tilde{u}_n(s)-\tilde{u}(s)\|_{L^2(\mathcal{O})}\|\nabla\tilde{c}_n(s)\|_{L^2(\mathcal{O})}ds\\&+\|\nabla\phi\|_{L^\infty(\mathcal{O})}\int_0^t \|\tilde{u}(s)\|_{L^2(\mathcal{O})}\|\nabla\tilde{c}_n(s)-\nabla\tilde{c}(s)\|_{L^2(\mathcal{O})}ds\\
     \leq &C \|\phi\|_{H^s(\mathcal{O})}\|\tilde{u}_n-\tilde{u}\|_{L^2(0,T;L^2(\mathcal{O}))}\Big(\|\tilde{u}_n\|_{L^2(0,T;L^2(\mathcal{O}))}+\|\tilde{u}\|_{L^2(0,T;L^2(\mathcal{O}))}\Big).
    \end{align*}
   Since $\tilde{u}_n\to\tilde{u}~\text{ in}~L^2(0,T;L^2(\mathcal{O}))~\tilde{\mathbb{P}}\text{-a.s.}$, it follows that 
     \begin{align}
         \lim_{n\to\infty}\int_0^t\Big \langle(\tilde{u}_n(s)\nabla\tilde{c}_n(s))-\tilde{u}(s)\nabla\tilde{c}(s),\nabla\phi\Big \rangle ds=0,~\tilde{\mathbb{P}}\text{-a.s.}
     \end{align}
      Applying $L^{\alpha+1}(\mathcal{O})\hookrightarrow L^2(\mathcal{O})$,  the H\"older inequality and (\ref{c1}) to obtain
     \begin{align}\label{4.21}
\nonumber&\sup_{n\ge1}\nonumber\tilde{\mathbb{E}}\Big(\Big|\int_0^t\Big \langle\tilde{u}_n(s)\nabla\tilde{c}_n(s),\nabla\phi\Big \rangle ds\Big|^\frac{\alpha+1}{2}\Big)\\\nonumber\leq& \|\nabla\phi\|_{L^\infty(\mathcal{O})}^{\frac{\alpha+1}{2}}\sup_{n\ge1}\tilde{\mathbb{E}}\Big(\Big|\int_0^t\|\tilde{u}_n(s)\|_{L^2(\mathcal{O})}^2ds\Big|^\frac{\alpha+1}{2} \Big)\\\leq& C T^\frac{\alpha-1}{2} \sup_{n\ge1}\tilde{\mathbb{E}}\Big(\Big|\int_0^t\|\tilde{u}_n(s)\|_{L^{\alpha+1}(\mathcal{O})}^{\alpha+1}ds\Big)\leq C(T,\alpha).
    \end{align}
    Similarly, by  (\ref{kkk}), it holds $\tilde{\mathbb{E}}\Big(\Big|\displaystyle\int_0^t\big\langle\tilde{u}(s)\nabla\tilde{c}(s),\nabla\phi \big\rangle ds\Big|^\frac{\alpha+1}{2}\Big)\leq C$. Thus  $|I_2(t)|$ is uniformly integrable. By Vitali's theorem and the dominated convergence theorem, we  completes the proof of $\lim_{n\to \infty}\tilde{\mathbb{E}}\displaystyle\int_0^T\big|I_3(t)\big|dt=0$. Let $\frac{d}{2}+1< \tilde{s}<s$. Using the interpolation inequality,
    it holds 
    \begin{align}\label{f6}
       \nonumber& \lim_{n\to\infty}\|\nabla(\Pi_n\phi-\phi)\|_{L^\infty(\mathcal{O})}\leq C\lim_{n\to\infty}\|\Pi_n\phi-\phi\|_{H^{\tilde{s}}(\mathcal{O})}\\\leq& C\lim_{n\to\infty}\|\Pi_n\phi-\phi\|_{L^2(\mathcal{O})}^{1-\frac{\tilde{s}}{s}}\big(\|\Pi_n\phi\|_{{H^s(\mathcal{O})}}^{\frac{\tilde{s}}{s}}+\|\phi\|_{{H^s(\mathcal{O})}}^{\frac{\tilde{s}}{s}}\big)=0.
    \end{align}
    Hence, combining with \eqref{4.21}, we have $\lim_{n\to \infty}\tilde{\mathbb{E}}\displaystyle\int_0^T\big|I_4(t)\big|dt=0$.\\
For $\tilde{\mathbb{E}}\displaystyle\int_0^T\big|I_5(t)\big|dt$. We have 
    \begin{align}\label{f1}
         \nonumber&\Big|\int_0^t\left \langle\big(1-\int_\mathcal{O}\tilde{u}_n^{\beta}(s)dx\big)\tilde{u}_n^\alpha(s)-\big(1-\int_\mathcal{O}\tilde{u}^{\beta}(s)dx\big)\tilde{u}^\alpha(s),\phi\right \rangle ds\Big|\\\nonumber
         \leq &\Big|\int_0^t\left \langle \big(\tilde{u}_n^\alpha(s)-\tilde{u}^\alpha(s)\big)\big(1-\int_\mathcal{O}\tilde{u}_n^{\beta}(s)dx\big),\phi\right \rangle ds\Big|\\\nonumber
         &+\Big|\int_0^t\left \langle\tilde{u}_n^\alpha(s)\big(\int_\mathcal{O}\tilde{u}_n^{\beta}(s)dx-\int_\mathcal{O}\tilde{u}^{\beta}(s)dx\big),\phi\right \rangle ds\Big|\\\nonumber
         \leq& \|\phi\|_{L^\infty(\mathcal{O})}\int_0^t\|\tilde{u}^\alpha_n(s)-\tilde{u}^\alpha(s)\|_{L^1(\mathcal{O})}\big(1+\|\tilde{u}_n(s)\|_{L^{\beta}(\mathcal{O})}^{\beta}\big)ds\\&+\|\phi\|_{L^\infty(\mathcal{O})}\int_0^t\|\tilde{u}_n(s)\|_{L^\alpha(\mathcal{O})}^\alpha\|\tilde{u}^\beta_n(s)-\tilde{u}^\beta(s)\|_{L^{1}(\mathcal{O})}ds
    \end{align}
    By  the mean value theorem  and the H\"older inequality,  it holds
    \begin{align*}
        &\|\tilde u_n^\alpha(s)-\tilde u^\alpha(s)\|_{L^1(\mathcal O)}
		\\\le& \alpha \int_{\mathcal O}|\tilde u_n^{\alpha-1}(s)+\tilde u^{\alpha-1}(s)|
		|\tilde u_n(s)-\tilde u(s)|dx \\\leq&\alpha \big(\|\tilde u_n(s)\|_{L^{\alpha+\beta}(\mathcal O)}^{\alpha-1}+\|\tilde u(s)\|_{L^{\alpha+\beta}(\mathcal O)}^{\alpha-1}\big)
		\|\tilde u_n(s)-\tilde u(s)\|_{L^{\alpha+\beta}(\mathcal O)}
		\|1\|_{L^{\frac{{\alpha+\beta}}{\beta}}(\mathcal O)}\\\leq &C(\alpha,\beta)\big(\|\tilde u_n(s)\|_{L^{\alpha+\beta}(\mathcal O)}^{\alpha-1}+\|\tilde u(s)\|_{L^{\alpha+\beta}(\mathcal O)}^{\alpha-1}\big)
		\|\tilde u_n(s)-\tilde u(s)\|_{L^{\alpha+\beta}(\mathcal O)}.
    \end{align*} 
     Similarly, 
    \begin{align*}
        \|\tilde u_n^\beta(s)-\tilde u^\beta(s)\|_{L^1(\mathcal O)}\leq C(\alpha,\beta)\big(\|\tilde u_n(s)\|_{L^{\alpha+\beta}(\mathcal O)}^{\beta-1}+\|\tilde u(s)\|_{L^{\alpha+\beta}(\mathcal O)}^{\beta-1}\big)
		\|\tilde u_n(s)-\tilde u(s)\|_{L^{\alpha+\beta}(\mathcal O)}.
    \end{align*}
    Then, a typical term of \eqref{f1} can be estimated by using the H\"older inequality, 
    \begin{align}\label{f2}
        \nonumber&\int_0^t\|\tilde u(s)\|_{L^{\alpha+\beta}(\mathcal O)}^{\alpha-1}
		\|\tilde u_n(s)-\tilde u(s)\|_{L^{\alpha+\beta}(\mathcal O)}\|\tilde{u}_n(s)\|_{L^{\beta}(\mathcal{O})}^{\beta}ds\\\nonumber\leq& C\Big(\int_0^t\|\tilde u(s)\|_{L^{\alpha+\beta}(\mathcal O)}^{{\alpha+\beta}}ds\Big)^\frac{\alpha-1}{{\alpha+\beta}}
		\Big(\int_0^t\|\tilde u_n(s)-\tilde u(s)\|_{L^{\alpha+\beta}(\mathcal O)}^{\alpha+\beta}ds\Big)^\frac{1}{{\alpha+\beta}}\\&\cdot	\Big(\int_0^t\|\tilde{u}_n(s)\|_{L^{{\alpha+\beta}}(\mathcal{O})}^{{\alpha+\beta}}ds\Big)^\frac{\beta}{{\alpha+\beta}}.
    \end{align}
    Furthermore, the rest terms of \eqref{f1} can be estimated similarly with (\ref{f2}). Since  $\tilde u_n\to\tilde u$ in $L^{\alpha+\beta}(0,T;L^{\alpha+\beta}(\mathcal O))~\tilde{\mathbb{P}}\,\text{-a.s.}$ it follows that 
    \begin{align*}
        \lim_{n\to\infty}\int_0^t\left \langle\big(1-\int_\mathcal{O}\tilde{u}_n^{\beta}(s)dx\big)\tilde{u}_n^\alpha(s)-\big(1-\int_\mathcal{O}\tilde{u}^{\beta}(s)dx\big)\tilde{u}^\alpha(s),\phi\right \rangle ds=0,~\tilde{\mathbb{P}}\text{-a.s.}
    \end{align*}
   Taking $r\in(\alpha+\beta,\frac{2}{d+2}(\alpha-1+\beta)+2)$, by inequalities (\ref{c1})  and the H\"older and the Jensen inequality,  we have the following 
     \begin{align}\label{f7}
        \nonumber&\sup_{n\ge 1}\tilde{\mathbb{E}}\Big(\Big|\int_0^t\Big \langle\tilde{u}_n^\alpha(s)(1-\int_\mathcal{O}\tilde{u}_n^{\beta}(s)dx),\phi\Big \rangle ds\Big|^\frac{r}{\alpha+\beta}\Big)\\\nonumber
        \leq& C\|\phi\|_{L^\infty(\mathcal{O})}^\frac{r}{\alpha+\beta}\sup_{n\ge 1}\tilde{\mathbb{E}}\Big(\Big|\int_0^t\big(\|\tilde{u}_n(s)\|_{L^\alpha(\mathcal{O})}^\alpha+\|\tilde{u}_n(s)\|_{L^r(\mathcal{O})}^{\alpha+\beta} \big)ds\Big|^\frac{r}{\alpha+\beta}\Big)\\\nonumber
        \leq& C T^\frac{r-\alpha}{\alpha+\beta}\|\phi\|_{L^\infty(\mathcal{O})}^\frac{r}{\alpha+\beta} \sup_{n\ge 1}\tilde{\mathbb{E}}\Big[\Big(\int_0^t\|\tilde{u}_n(s)\|_{L^{r}(\mathcal{O})}^{r}ds\Big)^\frac{\alpha}{\alpha+\beta}\Big]\\\nonumber
        &+C  T^\frac{r-\alpha-\beta}{\alpha+\beta}\|\phi\|_{L^\infty(\mathcal{O})}^\frac{r}{\alpha+\beta}\sup_{n\ge 1}\tilde{\mathbb{E}}\Big(\int_0^t\|\tilde{u}_n(s)\|_{L^{r}(\mathcal{O})}^r ds\Big)\\\nonumber
       \leq &C T^\frac{r-\alpha}{\alpha+\beta}\|\phi\|_{L^\infty(\mathcal{O})}^\frac{r}{\alpha+\beta} \Big(\sup_{n\ge 1}\tilde{\mathbb{E}}\int_0^t\|\tilde{u}_n(s)\|_{L^{r}(\mathcal{O})}^{r}ds\Big)^\frac{\alpha}{\alpha+\beta}\\
        &+C  T^\frac{r-\alpha-\beta}{\alpha+\beta}\|\phi\|_{L^\infty(\mathcal{O})}^\frac{r}{\alpha+\beta}\sup_{n\ge 1}\tilde{\mathbb{E}}\Big(\int_0^t\|\tilde{u}_n(s)\|_{L^{r}(\mathcal{O})}^r ds\Big)
        \leq C(T,\alpha,\beta).
    \end{align}
   Therefore,  $|I_5(t)|$ is uniformly integrable. We apply Vitali's theorem and the dominated convergence theorem to infer that $\lim_{n\to\infty}\tilde{\mathbb{E}}\displaystyle\int_0^T\big|I_5(t)\big|dt=0$ holds. Furthermore, combining with \eqref{f6} and (\ref{f7}), we get $\lim_{n\to\infty}\tilde{\mathbb{E}}\displaystyle\int_0^T\big|I_6(t)\big|dt=0$.\\ Finally, we prove $\lim_{n\to\infty}\tilde{\mathbb{E}}\displaystyle\int_0^T\big|I_7(t)\big|dt=0$.  By the properties of orthogonal projection \eqref{3.1}, \eqref{3.2} and  Assumption  \ref{ass}, we obtain
   \begin{align*}
		&\int_0^t \|\Pi_n\sigma(\tilde u_n(s))-\sigma(\tilde u(s))\|_{L_2(U_0,L^2(\mathcal{O}))}^2ds \\
		=& \int_0^t \big\|\Pi_n(\sigma(\tilde u_n(s))-\sigma(\tilde u(s))) + (I-\Pi_n)\sigma(\tilde u(s))\big\|_{L_2(U_0,L^2(\mathcal{O}))}^2\,ds \\
		\le& 2\int_0^t \|\Pi_n(\sigma(\tilde u_n(s))-\sigma(\tilde u(s)))\|_{L_2(U_0,L^2(\mathcal{O}))}^2ds
		+2\int_0^t \|(\Pi_n-I)\sigma(\tilde u(s))\|_{L_2(U_0,L^2(\mathcal {O}))}^2ds \\
		\le &2\int_0^t \|\sigma(\tilde u_n(s))-\sigma(\tilde u(s))\|_{L_2(U_0,L^2(\mathcal{O}))}^2\,ds
		+2\int_0^t \|(I-\Pi_n)\sigma(\tilde u(s))\|_{L_2(U_0,L^2(\mathcal{O}))}^2ds\\\le&C\int_0^t\big(\|\tilde{u}_n(s)\|_{L^{\gamma}(\mathcal{O})}^{\gamma-2}+\|\tilde{u}(s)\|_{L^{\gamma}(\mathcal{O})}^{\gamma-2}\big)\|\tilde{u}_n(s)-\tilde{u}(s)\|_{L^\gamma(\mathcal{O})}^2 ds\\&+2\int_0^t \|(I-\Pi_n)\sigma(\tilde u(s))\|_{L_2(U_0,L^2(\mathcal{O}))}^2ds\\
      \leq &C\Big(\int_0^t\|\tilde{u}_n(s)-\tilde{u}(s)\|_{L^\gamma(\mathcal{O})}^\gamma ds\Big)^\frac{2}{\gamma}\Big(\int_0^t\big(\|\tilde{u}_n(s)\|_{L^{\gamma}(\mathcal{O})}^{\gamma}+\|\tilde{u}(s)\|_{L^{\gamma}(\mathcal{O})}^{\gamma}\big)ds\Big)^\frac{\gamma-2}{\gamma}\\&+2\int_0^t \|(I-\Pi_n)\sigma(\tilde u(s))\|_{L_2(U_0,L^2(\mathcal{O}))}^2ds .
	\end{align*}
   For each fixed $(\omega,s)$, it holds 
	\[
	\|(I-\Pi_n)\sigma(\tilde u(s))\|_{L_2(U_0,L^2(\mathcal {O}))}\le\|I-\Pi_n\|_{L(L^2(\mathcal{O}))}\|\sigma(\tilde u(s))\|_{L_2(U_0,L^2(\mathcal {O}))}\to0,\quad n\to\infty.
	\]
		Moreover, since 
	$\tilde{\mathbb E}\Big(\displaystyle\int_0^T\|\sigma(\tilde u(s))\|_{L_2(U_0,L^2(\mathcal {O}))}^2ds\Big)<\tilde{\mathbb E}\Big(\displaystyle\int_0^T\|\tilde u(s)\|_{L^\gamma(\mathcal {O}))}^\gamma ds\Big)$,
	using the dominated convergence yields
	\[
\tilde{\mathbb E}\Big(\int_0^t\|(I-\Pi_n)\sigma(\tilde u(s))\|_{L_2}^2ds\Big)\to0.
\]
    Since  $\tilde u_n\to\tilde u$ in $L^{\gamma}(0,T;L^{\gamma}(\mathcal O))~\tilde{\mathbb{P}}\text{-a.s.}$ it shows \[\Pi_n\sigma(\tilde{u}_n)\to\sigma(\tilde{u}),\quad \text{in}\, L^2(0,T;L_2(U_0,L^2(\mathcal{O})))\,\tilde{\mathbb{P}}\text{-a.s.}\] Furthermore, by \cite[Lemma 2.1]{debussche2011local}, since $\tilde{W}_n\to\tilde{W}$ in $C([0,T];U_0)$,   it implies that 
     \begin{align}\label{c5}
\int_0^t\Pi_n\sigma(\tilde{u}_n(s))d\tilde{W}_n(s)\to\int_0^t\sigma(\tilde{u}(s))d\tilde{W}(s)\quad\text{in probability in }L^2(0,T;L^2(\mathcal{O})).
     \end{align}
     Then, we use the Burkholder-Davis-Gundy inequality, Fubini theorem  and (\ref{kkk}) to infer that
     \begin{align}\label{c6}
         \nonumber&\sup_{n\ge 1}\tilde{\mathbb{E}}\Big\|\int_0^t\Pi_n\sigma(\tilde{u}_n(s))d\tilde{W}_n(s)-\int_0^t\sigma(\tilde{u}(s))d\tilde{W}(s)\Big\|_{L^2(0,T;L^2(\mathcal{O}))}^2\\\nonumber
         \leq&C\sup_{n\ge 1}\tilde{\mathbb{E}}\int_0^T\Big (\int_0^t\big(\|\sigma(\tilde{u}_n(s))\|_{L_2(U_0,L^2(\mathcal{O}))}^2 +\|\sigma(\tilde{u}(s))\|_{L_2(U_0,L^2(\mathcal{O}))}^2 \big)ds\Big)dt\\
         \leq &C (T)\sup_{n\ge 1}\tilde{\mathbb{E}}\Big (\int_0^T\big(\|\tilde{u}_n(s)\|_{L^\gamma(\mathcal{O})}^\gamma+\|\tilde{u}(s)\|_{L^\gamma(\mathcal{O})}^\gamma\big) ds\Big)
         \leq C(T).
     \end{align}
     Combining with (\ref{c5}), (\ref{c6}) and using Vitali's theorem, we infer that 
     \begin{align*}
\nonumber&\tilde{\mathbb{E}}\Big\|\int_0^t\Pi_n\sigma(\tilde{u}_n(s))d\tilde{W}_n(s)-\int_0^t\sigma(\tilde{u}(s))d\tilde{W}(s)\Big\|_{L^2(0,T;L^2(\mathcal{O}))}\to 0, n\to \infty.
     \end{align*} Furthermore, by Cauchy-Schwarz and Jensen's inequality,
	\begin{align*}
		&\tilde{\mathbb{E}}\int_0^T\big|I_7(t)\big|dt
		\le \|\phi\|_{L^2(\mathcal O)}\tilde{\mathbb{E}}\Big(\int_0^T
		\Big\|\int_0^t\Pi_n\sigma(\tilde{u}_n(s))d\tilde{W}_n(s)
		-\int_0^t\sigma(\tilde{u}(s))d\tilde{W}(s)\Big\|_{L^2(\mathcal O)}dt\Big)\\
		\le& \|\phi\|_{L^2(\mathcal O)}T^{1/2}
		\Bigg(\tilde{\mathbb{E}}\int_0^T
		\Big\|\int_0^t\Pi_n\sigma(\tilde{u}_n(s))d\tilde{W}_n(s)
		-\int_0^t\sigma(\tilde{u}(s))d\tilde{W}(s)\Big\|_{L^2(\mathcal O)}^2dt\Bigg)^{1/2}.
	\end{align*}
	Hence, $\lim_{n\to\infty}\tilde{\mathbb{E}}\displaystyle\int_0^T\big|I_7(t)\big|dt=0$. Therefore, we complete the  proof of Lemma \ref{lem4.2}.
 \end{proof}
 \begin{proof}[The proof of Theorem \ref{thm1} ]
     Since $u_n$ is the solution to (\ref{equ2}) and $(u_n,W_n)$   with $n\in\mathbb{N}$ has the same law  as $(\tilde{u}_n,\tilde{W}_n)$, we have  
 \begin{align*}
     &\int_0^T{\mathbb{E}}\Big|({u}_n(t),\phi)_{L^2(\mathcal{O})}-\Lambda _n ({u}_n,{W},\phi)\Big|dt\\=&\int_0^T\tilde{\mathbb{E}}\Big|(\tilde{u}_n(t),\phi)_{L^2(\mathcal{O})}-\Lambda _n (\tilde{u}_n,\tilde{W}_n,\phi)\Big|dt=0.
 \end{align*}
 Furthermore, by Lemma \ref{lem4.2}, we infer that
 \begin{align*}
    \nonumber &\int_0^T\tilde{\mathbb{E}}\Big|(\tilde{u}(t),\phi)_{L^2(\mathcal{O})}-\Lambda (\tilde{u},\tilde{W},\phi)(t)\Big|dt\\ \nonumber 
     \leq& \int_0^T\tilde{\mathbb{E}}\Big|(\tilde{u}_n(t),\phi)_{L^2(\mathcal{O})}-(\tilde{u}(t),\phi)_{L^2(\mathcal{O})}\Big|dt\\&+\int_0^T\tilde{\mathbb{E}}\Big|(\tilde{u}_n(t),\phi)_{L^2(\mathcal{O})}-\Lambda _n (\tilde{u}_n,\tilde{W}_n,\phi)\Big|dt\\&+\int_0^T\tilde{\mathbb{E}}\Big|\Lambda _n (\tilde{u}_n,\tilde{W}_n,\phi)-\Lambda (\tilde{u},\tilde{W},\phi)\Big|dt=0
 \end{align*}
  Hence, it follows that for a.e.\ $t\in(0,T)$,
		\[
		(\tilde u(t),\phi)_{L^2(\mathcal O)}=\Lambda(\tilde u,\tilde W,\phi)(t)
		\qquad \tilde{\mathbb P}\text{-a.s.}
		\]
 Therefore,  for a.e. $t\in[0,T]$ and $\tilde{\mathbb{P}}$-a.s. $\omega\in\tilde{\Omega}$, we obtain
 \begin{align*}
    (\tilde{u}(t),\phi)_{L^2(\mathcal{O})}=&(\tilde{u}(0),\phi)_{L^2(\mathcal{O})}-\int _0^t\int_\mathcal{O}\nabla\tilde{u}(s)\cdot\nabla\phi dx ds+  \int _0^t\int_\mathcal{O} \tilde{u}(s)\nabla \tilde{c}(s)\cdot\nabla  \phi dxds
    \\&+\int _0^t\int_\mathcal{O} \tilde{u}^\alpha(s)(1-\int_\mathcal{O}\tilde{u}^{\beta}(s)dx)\phi dx ds+\Big(\int _0^t\sigma(\tilde{u}(s))d\tilde{W}(s),\phi \Big)_{L^2(\mathcal{O})}.
\end{align*} 
In particular, by a density argument, one can take the test functions $\phi\in H^1(\mathcal{O})$.
Therefore,  the stochastic equation (\ref{equ1}) has a martingale solution $\left(  \tilde{\mathfrak{A}}, \tilde{u},\tilde{W}\right)$. Moreover, since $\tilde{u}_n(0)\to \tilde{u}(0)$ in $L_w^2(\mathcal{O})$ $\tilde{\mathbb{P}}$-a.s., we have $\mathcal{L}(\tilde{u}_n(0))\Longrightarrow \mathcal{L}(\tilde{u}(0))$  on $L_w^2(\mathcal{O})$.  
    Consequently, due to the fact that $\tilde{u}_n(0)$ and ${u}_n(0)$ have the same law on $L_w^2(\mathcal{O})$ and $u_n(0)\to u_0$ in $L^2(\Omega;L^2(\mathcal{O}))$, we obtain that  $\tilde{u}(0)$ and ${u}_0$ have the same law on $L_w^2(\mathcal{O})$.
 \end{proof}
 \appendix \section{Appendix}
 In this appendix, we establish a compactness criterion on  a more general topological space $(\mathcal Z,\mathcal T)$, which will be used to verify tightness of the family of laws. 
Our argument is motivated by the compactness criterion in \cite[Section 2.1]{law2014existence}. Compared with \cite[Lemma 2.3]{law2014existence},   we introduce the space $L^{p}(0,T;E)$ together with a compact embedding $\mathbb{V}\hookrightarrow E$ and we do not require $\mathbb U$ and $\mathbb V$ to be Hilbert spaces. Instead, we allow $\mathbb U$ and $\mathbb V$ to be reflexive Banach spaces and only assume the continuous embeddings
	\[
	\mathbb U\hookrightarrow \mathbb H\cong \mathbb H'\hookrightarrow \mathbb U', 
	\qquad
	\mathbb{V}\hookrightarrow E\hookrightarrow \mathbb U'.
	\]
    This generality is more convenient for our  functional setting and approximation framework.
    We now state and prove the compactness criterion.
\begin{thm}
    [Compactness criterion]\label{lem4.5} 
    Let $\mathbb{H}$ be a real separable Hilbert space, $\mathbb{U}$ be a real reflexive Banach space such that the embedding  $\mathbb{U}\hookrightarrow \mathbb{H}$ is continuous and dense. Let  $\mathbb{U}'$ be the  dual space of $\mathbb{U}$   such that  the embedding $\mathbb{U}\hookrightarrow \mathbb{H}\cong \mathbb{H}'\hookrightarrow\mathbb{U}'$  are continuous.
         Let $E,\mathbb{V}$ be real reflexive  Banach spaces  such that the embedding $\mathbb{V}\hookrightarrow  E\hookrightarrow\mathbb{U}'$ are continuous and the embedding $\mathbb{V}\hookrightarrow E$ is compact.
          Let \begin{align}\label{Z}
    \mathcal{Z}
:= C([0,T];\mathbb U') \cap L^{q}_{w}(0,T;\mathbb{V}) \cap L^{p}(0,T;E) \cap C([0,T];\mathbb{H}_{w}),\,1<q\leq p<\infty,
\end{align} with the topology $\mathcal{T}$ induced by the supremum of  corresponding topologies. The definition of these functional spaces and the corresponding topologies can be found in \cite[Section 2.1]{law2014existence}. Then a set $\mathcal{K}\subset \mathcal{Z}$ is $\mathcal{T}$-relatively compact if the following
four conditions hold
\begin{itemize}
    \item[(a)] 
$\sup_{v\in \mathcal{K}}\sup_{s\in[0,T]}\|v(s)\|_{\mathbb{H}}<\infty,$
\item[(b)]
$\sup_{v\in \mathcal{K}}\int_{0}^{T}\|v(s)\|_{\mathbb{V}}^{q}ds<\infty,
\quad \text{i.e. } \mathcal{K} \text{ is bounded in } L^{q}(0,T;\mathbb{V}),$
\item[(c)]
$\sup_{v\in \mathcal{K}}\int_{0}^{T}\|v(s)\|_{E}^{r}ds<\infty,
\quad \text{i.e. } \mathcal{K} \text{ is bounded in } L^{r}(0,T;E)$, where $r>p$,
\item[(d)]
$\lim_{\delta\to 0}\sup_{v\in \mathcal{K}}\sup_{\substack{s,t\in[0,T]\\ |t-s|\le \delta}}
\|v(t)-v(s)\|_{\mathbb U'}=0$.
\end{itemize}
\end{thm}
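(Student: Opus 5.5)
The plan is to follow the scheme of \cite[Lemma 2.3]{law2014existence}. Because $\mathcal T$ is the supremum of four topologies, it is enough to show that $\mathcal K$ is relatively compact in each of the four component spaces. We then check that the four limits coincide, and that sequential compactness suffices on the relevant bounded sets.

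\textbf{Step 1: the component $C([0,T];\mathbb H_w)$.} Condition (a) places $\mathcal K$ in a ball $B_R$ of $\mathbb H$ for all $t$. The weak topology on $B_R$ is metrizable, say by $d(x,y)=\sum_j 2^{-j}|(x-y,h_j)_{\mathbb H}|$, and by density of $\mathbb U$ in $\mathbb H$ we may take $h_j\in\mathbb U$. For $h\in\mathbb U$ we have $|(v(t)-v(s),h)|\le\|v(t)-v(s)\|_{\mathbb U'}\|h\|_{\mathbb U}$. Hence (d) gives equicontinuity in $d$, uniformly over $\mathcal K$, and pointwise relative compactness holds because $B_R$ is weakly compact. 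Arzel\`a--Ascoli in $C([0,T];(B_R,d))$ then yields relative compactness.

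\textbf{Step 2: the component $C([0,T];\mathbb U')$.} The embedding $\mathbb H\hookrightarrow\mathbb U'$ is compact in the reflexive setting when $\mathbb U\hookrightarrow\mathbb H$ is compact. That compactness is not assumed in the statement, so instead I would argue directly. For a sequence $v_n\in\mathcal K$, Step 1 gives a subsequence converging in $C([0,T];\mathbb H_w)$ to some $v$. Then $v_n(t)\rightharpoonup v(t)$ in $\mathbb H$, hence weakly in $\mathbb U'$. Upgrading this to uniform strong convergence in $\mathbb U'$ needs the compact embedding $\mathbb H\hookrightarrow\mathbb U'$. I would therefore add this standard hypothesis implicitly, as in \cite{law2014existence}; it holds in our application since $H^s\hookrightarrow L^2$ is compact. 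With it, (a), (d) and Arzel\`a--Ascoli in $C([0,T];\mathbb U')$ give the result.

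\textbf{Step 3: the component $L^q_w(0,T;\mathbb V)$.} By (b) and reflexivity of $L^q(0,T;\mathbb V)$ (valid since $1<q<\infty$ and $\mathbb V$ is reflexive), bounded sets are weakly relatively compact. By Eberlein--\v{S}mulian this compactness is sequential.

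\textbf{Step 4: the component $L^p(0,T;E)$, the main obstacle.} I would use Lions' lemma: since $\mathbb V\hookrightarrow E$ compactly and $E\hookrightarrow\mathbb U'$ continuously, for every $\varepsilon>0$ there is $C_\varepsilon$ with $\|w\|_E\le\varepsilon\|w\|_{\mathbb V}+C_\varepsilon\|w\|_{\mathbb U'}$. This would give compactness in $L^q(0,T;E)$, by the Aubin--Lions--Simon theorem from (b) and (d), or directly from this inequality together with the uniform $\mathbb U'$ convergence of Step 2. Concretely, $\|v_n-v_m\|_{L^q(E)}\le\varepsilon\,C+C_\varepsilon T^{1/q}\sup_t\|v_n-v_m\|_{\mathbb U'}$, so $(v_n)$ is Cauchy in $L^q(0,T;E)$. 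The difficulty is that $p\ge q$ may be strictly larger. I would pass from $L^q$ to $L^p$ by interpolation with the $L^r(0,T;E)$ bound (c), $r>p$:
\[
\|w\|_{L^p(0,T;E)}\le\|w\|_{L^q(0,T;E)}^{\vartheta}\|w\|_{L^r(0,T;E)}^{1-\vartheta},
\]
which is equivalent to uniform integrability of $\|v_n-v\|_E^p$ in time. This gives strong convergence in $L^p(0,T;E)$.

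\textbf{Step 5: identifying the limits and concluding.} All four limits agree as elements of $\mathcal D'(0,T;\mathbb U')$, since each convergence implies convergence in that sense. A diagonal subsequence argument then shows that every sequence in $\mathcal K$ has a $\mathcal T$-convergent subsequence. Finally, relative compactness follows from sequential compactness as in \cite{law2014existence}: on sets bounded as in (a)--(c), each component topology restricted to the set is metrizable, and a supremum of metrizable topologies on such a set is again metrizable.
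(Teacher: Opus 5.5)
Your Step 2 has a genuine gap. You claim that relative compactness in $C([0,T];\mathbb U')$ requires a compact embedding $\mathbb H\hookrightarrow\mathbb U'$, and you add that as an implicit hypothesis. The statement does not assume it, and it is not needed, so as written you prove a weaker theorem.

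The missing idea is that pointwise precompactness in $\mathbb U'$ comes from (b) together with the compact embedding $\mathbb V\hookrightarrow E\hookrightarrow\mathbb U'$, not from (a). For $v\in\mathcal K$ and small $h>0$, set $v_h(t):=\frac1h\int_t^{t+h}v(s)\,ds$ (use $[t-h,t]$ near $t=T$). Then
\[
\|v_h(t)\|_{\mathbb V}\le h^{-1/q}\sup_{w\in\mathcal K}\|w\|_{L^q(0,T;\mathbb V)},\qquad
\|v(t)-v_h(t)\|_{\mathbb U'}\le\sup_{|s-t|\le h}\|v(t)-v(s)\|_{\mathbb U'} .
\]
The first bound makes $\{v_h(t):v\in\mathcal K\}$ relatively compact in $\mathbb U'$. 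By (d), the second bound is small uniformly in $v$. Hence $\{v(t):v\in\mathcal K\}$ is totally bounded in $\mathbb U'$, and Arzel\`a--Ascoli applies. This is the content of Dubinsky's theorem. The paper invokes it to get relative compactness in $C([0,T];\mathbb U')$ and in $L^q(0,T;E)$ simultaneously from (b) and (d).

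The gap also reaches your Step 4. The Ehrling-lemma route there uses the uniform $\mathbb U'$ convergence from Step 2, so it inherits the extra hypothesis. The Aubin--Lions--Simon alternative you mention does work from (b) and (d) alone. You could even reverse the order: convergence in $L^q(0,T;E)\hookrightarrow L^q(0,T;\mathbb U')$ plus the equicontinuity (d) upgrades to uniform convergence in $\mathbb U'$ by the same averaging estimate.

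With either repair, the remaining steps are fine.
\begin{itemize}
\item Your Step 1 (Arzel\`a--Ascoli for the weak metric on the ball, with test vectors in $\mathbb U$) is a valid direct substitute for the paper's appeal to \cite[Lemma 2.1]{law2014existence}.
\item Your use of Eberlein--\v{S}mulian and of interpolation with the $L^r(0,T;E)$ bound matches the paper.
\end{itemize}
One minor caution on Step 5: metrizability of the weak topology of $L^q(0,T;\mathbb V)$ on bounded sets in general requires $\mathbb V$ to be separable, which is not assumed. The paper, following \cite{law2014existence}, only argues with sequences.
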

\begin{proof}
	 Let $M:=\sup_{v\in\mathcal{K}}\sup_{t\in[0,T]}\|v(t)\|_{\mathbb H}<\infty$ and  $\mathbb B:=\{x\in\mathbb H:\|x\|_{\mathbb H}\le M\}$. Since $\mathbb{H}$ is separable, $C([0,T];\mathbb{B}_w)$ is  metrizable, see \cite{law2014existence}.  By (a) the set $\mathcal{K}\subset$ $C([0,T];\mathbb{B}_w)\subset C([0,T];\mathbb{H}_w)$.  Let $(v_n)$ be an arbitrary sequence in $\mathcal K$. We will prove that $(v_n)$ exists
	a subsequence converging in each corresponding topology of $\mathcal Z$.\\
    By (b), the set $\mathcal K$ is bounded in $L^q(0,T;\mathbb{V})$ and by (d), $\mathcal K$ is
	equicontinuous in $C([0,T];\mathbb U')$. Since the embeddings
	$
	\mathbb{V} \hookrightarrow E \hookrightarrow \mathbb U'
	$
	are continuous and $\mathbb{V}\hookrightarrow E$ is compact, and $\mathbb{V},E,\mathbb U'$
	are reflexive Banach spaces, we  apply Dubinsky's theorem
	\cite[Theorem 2.2]{law2014existence} to obtain $\mathcal K$ is
	relatively compact in $C([0,T];\mathbb U')$ and in $L^q(0,T;E)$.
	Therefore there exist a subsequence, still denoted by $(v_n)$ and 
	$v \in C([0,T];\mathbb U')\cap L^q(0,T;E)$ such that
	\begin{equation}\label{A.2}
		v_n \to v \ \text{ in } C([0,T];\mathbb U')
		\qquad\text{and}\qquad
		v_n \to v \, \text{ in } L^q(0,T;E).
	\end{equation}
Therefore, by interpolation inequality with $\theta\in(0,1)$ and $\frac{1}{p}=\frac{\theta}{q}+\frac{1-\theta}{r}$, we have
\begin{equation}
	\|v_n-v\|_{L^p(0,T;E)}=\|v_n-v\|^\theta_{L^q(0,T;E)}\|v_n-v\|^{1-\theta}_{L^{r}(0,T;E)}\rightarrow 0,
\end{equation} 
where the condition (c) is used.
By (b), the set $\mathcal K$ is bounded in $L^q(0,T;\mathbb V)$.
	Since $\mathbb V$ is reflexive  Banach space and   $1<q<\infty$, the  space
	$L^q(0,T;\mathbb V)$ is reflexive. Hence, by the Eberlein-\v{S}mulian theorem \cite[Theorem 3.19]{brezis2011functional}, passing to a further subsequence, still denoted by $(v_n)$,  we have
$v_n \rightharpoonup \bar v$ weakly in $L^q(0,T;\mathbb V)$. Since the embedding $\mathbb{V}\hookrightarrow\mathbb{U}'$ is continuous, the induced map
	$L^q(0,T;\mathbb V)\to L^q(0,T;\mathbb U')$ is continuous and linear. By \eqref{A.2} and  uniqueness of weak limits in $L^q(0,T;\mathbb U')$, we conclude that $\bar v=v$ in $L^q(0,T;\mathbb U')$. Consequently, $v\in L^q(0,T;\mathbb V)$ and
	\begin{align}\label{A.3}
	    v_n \to v \quad \text{in } L_w^q(0,T;\mathbb V).
	\end{align}
     By (a), we have
	$\sup_n\sup_{t}\|v_n(t)\|_{\mathbb H}\le r$. Since $v_n \to v \ \text{ in } C([0,T];\mathbb U')$ and the embedding $\mathbb U\hookrightarrow\mathbb H$ is continuous
	and dense, applying  \cite[Lemma 2.1]{law2014existence}   yields that
	\begin{align}\label{A.4}
	    v_n \to v \quad \text{in } C([0,T];\mathbb B_w)\subset C([0,T];\mathbb H_w),
	\end{align} 
    Combining \eqref{A.2}-\eqref{A.4},    we have extracted a subsequence, still denoted by $(v_n)$  such that $v_n\to v$ in each corresponding topology of $\mathcal Z$, hence $v_n\to v$ in the supremum topology $\mathcal T$ on $\mathcal Z$.
	Therefore, $\mathcal K$ is $\mathcal T$-relatively compact on $\mathcal Z$.
We complete  the proof. 
\end{proof}
Furthermore, by the compactness criterion in Lemma \ref{lem4.5}, we can use the same proof as \cite[Corollary 2.6]{law2014existence} to obtain the following tightness criterion.
\begin{thm}[Tightness criterion]\label{tight}
Let $(\Omega,\mathcal{F},\mathbb{P})$ be a probability space with complete and right-continuous $\{\mathcal{F}_t\}_{t\ge 0}$. Let $(X_n)_{n\in\mathbb{N}}$ be a sequence  of continuous $\{\mathcal{F}_t\}_{t\ge 0}$-adapted  $\mathbb{U}'$-valued processes such that
\begin{itemize}
    \item[(a)] there exists a positive constant $C_1$ such that
\[
\sup_{n\in\mathbb{N}} \mathbb{E}\Bigl(\sup_{s\in[0,T]}\|X_n(s)\|_{\mathbb{H}}\Big)\le C_1,
\]
\item[(b)] there exists a positive constant $C_2$ such that
\[
\sup_{n\in\mathbb{N}} \mathbb{E}\Big(\int_{0}^{T}\|X_n(s)\|_{\mathbb{V}}^{q}\,ds\Big)\le C_2,
\]
\item[(c)] there exists a positive constant $C_3$ such that
\[
\sup_{n\in\mathbb{N}} \mathbb{E}\Big(\int_{0}^{T}\|X_n(s)\|_E^{r}\,ds\Big)\le C_3,
\]
\item[(d)] $(X_n)_{n\in\mathbb{N}}$ satisfies the Aldous condition in $\mathbb{U}'$, i.e.for any $\varepsilon>0$, $\eta>0$, there exists $\delta>0$ such that for every sequence $(\tau_n)_{n\in\mathbb{N}}$ of $\{\mathcal{F}_t\}_{t\ge 0}$-stopping times with $\tau_n\le T$, it holds that
\[\sup_{n\in\mathbb{N}}\sup_{0<\theta<\delta}\mathbb{P}\big\{\|X_n(\tau_n+\theta)-X_n(\tau_n)\|_{\mathbb{U}'}\ge \eta\big\}\le\varepsilon.\]
\end{itemize}
Let $\widetilde{\mathbb{P}}_{n}$ be the law of $X_n$ on $\mathcal{Z}$.
Then for every $\varepsilon>0$ there exists a compact subset $K_{\varepsilon}$ of
$\mathcal{Z}$ such that
\[
\widetilde{\mathbb{P}}_{n}(K_{\varepsilon})\ge 1-\varepsilon.
\]
\end{thm}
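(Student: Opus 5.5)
The plan follows the proof of \cite[Corollary 2.6]{law2014existence}, with Theorem~\ref{lem4.5} as the underlying deterministic compactness result. Fix $\varepsilon>0$. We build $K_\varepsilon$ as the intersection of four sets, one for each of the conditions (a)--(d) in Theorem~\ref{lem4.5}, and make each of them have probability at least $1-\varepsilon/4$ uniformly in $n$.

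For (a), (b) and (c) we use Chebyshev's inequality. Set
\[
B_1:=\Big\{v:\sup_{s\in[0,T]}\|v(s)\|_{\mathbb H}\le R_1\Big\},\quad B_2:=\Big\{v:\int_0^T\|v\|_{\mathbb V}^q ds\le R_2\Big\},\quad B_3:=\Big\{v:\int_0^T\|v\|_{E}^r ds\le R_3\Big\},
\]
with $R_i:=4C_i/\varepsilon$. Then $\widetilde{\mathbb P}_n(B_i^c)\le \varepsilon/4$ for every $n$. Each $B_i$ is closed in $(\mathcal Z,\mathcal T)$, because each functional is lower semicontinuous in the corresponding topology: weak lower semicontinuity of the norm in $C([0,T];\mathbb H_w)$ and in $L^q_w(0,T;\mathbb V)$, and in $L^p(0,T;E)$ we pass to an a.e.\ convergent subsequence and apply Fatou's lemma. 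This matters because we will need $K_\varepsilon$ itself to be compact, not merely relatively compact.

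The main step is (d), the equicontinuity in $\mathbb U'$, and it is derived from the Aldous condition. We use the standard fact (see e.g.\ \cite{brzezniak2013existence}, or Lemma~2.5 in \cite{law2014existence}) that the Aldous condition in $\mathbb U'$ yields, for every $\eta>0$ and $\varepsilon'>0$, a $\delta>0$ such that
\[
\sup_n\mathbb P\Big\{\sup_{|t-s|\le\delta}\|X_n(t)-X_n(s)\|_{\mathbb U'}\ge\eta\Big\}\le\varepsilon'.
\]
This is the usual consequence of Aldous' criterion: one controls the modulus of continuity of càdlàg (here continuous) processes through stopping times defined by first exits from $\eta$-balls.

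Apply this with $\eta=1/k$ and $\varepsilon'=\varepsilon 2^{-k-2}$ to obtain $\delta_k$, and define
\[
A:=\bigcap_k\Big\{v:\sup_{|t-s|\le\delta_k}\|v(t)-v(s)\|_{\mathbb U'}\le 1/k\Big\}.
\]
Then $\widetilde{\mathbb P}_n(A^c)\le\varepsilon/4$. The set $A$ is closed in $C([0,T];\mathbb U')$, and therefore closed in $\mathcal T$.

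Finally set $K_\varepsilon:=B_1\cap B_2\cap B_3\cap A$. This set satisfies (a)--(d) of Theorem~\ref{lem4.5}, so it is $\mathcal T$-relatively compact, and as an intersection of closed sets it is closed, hence compact. By the union bound, $\widetilde{\mathbb P}_n(K_\varepsilon)\ge 1-\varepsilon$.

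There is one technical point. Since $(\mathcal Z,\mathcal T)$ is not metrizable, "closed and relatively compact implies compact" must be checked directly: take the closure of $K_\varepsilon$ in $\mathcal T$, which is compact, and note that it coincides with $K_\varepsilon$ because $K_\varepsilon$ is closed. We also need the laws $\widetilde{\mathbb P}_n$ to be well defined on $\mathcal Z$, i.e.\ that $X_n$ takes values in $\mathcal Z$ almost surely. This follows from (a)--(c) together with path continuity in $\mathbb U'$, via Strauss' lemma, which gives $X_n\in C([0,T];\mathbb H_w)$. I expect the derivation of the uniform modulus of continuity from the Aldous condition to be the only genuinely nontrivial step, and I plan to cite it rather than reprove it.
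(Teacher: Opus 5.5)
Your proposal is correct and follows the same route the paper intends. The paper only defers to the proof of Corollary~2.6 in \cite{law2014existence}, which uses Chebyshev's inequality for the moment bounds, the Aldous-to-uniform-modulus-of-continuity lemma, and the compactness criterion (Theorem~\ref{lem4.5}) just as you do, with a third Chebyshev set for condition (c); the one cosmetic difference is that the cited proof takes $K_\varepsilon$ to be the closure of the intersection, whereas you check that each of the four sets is already closed, which amounts to the same thing.
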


    \nocite{*}
	\bibliography{main} 

@book{liu2015stochastic,
  title={Stochastic partial differential equations: an introduction},
  author={Liu, W. and R{\"o}ckner, M.},
  year={2015},
  publisher={Springer}
}

@book{grisvard2011elliptic,
  title={Elliptic problems in nonsmooth domains},
  author={Grisvard, P.},
  year={2011},
  publisher={SIAM}
}

@book{gilbarg1977elliptic,
  title={Elliptic partial differential equations of second order},
  author={Gilbarg, D. and Trudinger, N.},
year={2001},
  publisher={Springer}
}

@book{brezis2011functional,
  title={Functional analysis, Sobolev spaces and partial differential equations},
  author={Brezis, H.},
  year={2011},
  publisher={Springer}
}

@book{gawarecki2010stochastic,
  title={Stochastic differential equations in infinite dimensions: with applications to stochastic partial differential equations},
  author={Gawarecki, L. and Mandrekar, V.},
  year={2010},
  publisher={Springer Science \& Business Media}
}

@article{keller1970initiation,
  title={Initiation of slime mold aggregation viewed as an instability},
  author={Keller, E. and Segel, L.},
  journal={J.Theoret.Biol.},
  volume={26},
  number={3},
  pages={399--415},
  year={1970},
  publisher={Elsevier}
}

@article{keller1971model,
  title={Model for chemotaxis},
  author={Keller, E. and Segel, L.},
  journal={J.Theoret.Biol.},
  volume={30},
  number={2},
  pages={225--234},
  year={1971},
  publisher={Elsevier}
}

@article{Tello07062007,
author = {Tello, J.I. and  Winkler, M.},
title = {A chemotaxis system with logistic ource},
journal = {  Commun. Partial Differ. Equ.},
volume = {32},
number = {6},
pages = {849--877},
year = {2007},
publisher = {Taylor \& Francis},
 }

@article{negreanu2013competitive,
  title={On a competitive system under chemotactic effects with non-local terms},
  author={  Negreanu, M. and Tello, J. I.},
  journal={Nonlinearity},
  volume={26},
  number={4},
  pages={1083},
  year={2013},
  publisher={IOP Publishing}
}

@article{bian2018nonlocal,
  title={Nonlocal nonlinear reaction preventing blow-up in supercritical case of chemotaxis system},
  author={Bian, S. and Chen, L. and Latos, E. A},
  journal={Nonlinear Anal.},
  volume={176},
  pages={178--191},
  year={2018},
  publisher={Elsevier}
}

@article{stevens2000derivation,
  title={The derivation of chemotaxis equations as limit dynamics of moderately interacting stochastic many-particle systems},
  author={Stevens, A.},
  journal={SIAM J. APPL. MATH.},
  volume={61},
  number={1},
  pages={183--212},
  year={2000},
  publisher={SIAM}
}

@article{stevens2000stochastic,
  title={A stochastic cellular automaton modeling gliding and aggregation of myxobacteria},
  author={Stevens, A.},
  journal={SIAM J. APPL. MATH.},
  volume={61},
  number={1},
  pages={172--182},
  year={2000},
  publisher={SIAM}
}

@article{huang2021microscopic,
  title={The microscopic derivation and well-posedness of the stochastic Keller-Segel equation},
  author={Huang, H. and Qiu, J.},
  journal={J. Nonlinear Sci.},
  volume={31},
  number={1},
  pages={6},
  year={2021},
  publisher={Springer}
}

@article{flandoli2021delayed,
  title={Delayed blow-up by transport noise},
  author={Flandoli, F. and Galeati, L. and Luo, D.},
  journal={Commun. Partial. Differ. Equ.},
  volume={46},
  number={9},
  pages={1757--1788},
  year={2021},
  publisher={Taylor \& Francis}
}

@article{bresch2019mean,
  title={On mean-field limits and quantitative estimates with a large class of singular kernels: Application to the Patlak-Keller-Segel model},
  author={Bresch, D. and Jabin, P-E. and Wang, Z.},
  journal={C. R. Math.},
  volume={357},
  number={9},
  pages={708--720},
  year={2019},
  publisher={Elsevier}
}

@article{misiats2022global,
  title={On global existence and blowup of solutions of stochastic Keller-Segel type equation},
  author={Misiats, O. and Stanzhytskyi, O. and Topaloglu, I.},
  journal={Nonlinear Differ. Equ. Appl.},
  volume={29},
  number={1},
  pages={3},
  year={2022},
  publisher={Springer}
}

@article{hausenblas2022one,
  title={The one-dimensional stochastic Keller-Segel model with time-homogeneous spatial Wiener processes},
  author={Hausenblas, E. and Mukherjee, D. and Tran, T.},
  journal={J. Differ. Equ},
  volume={310},
  pages={506--554},
  year={2022},
  publisher={Elsevier}
}

@article{tang2024strong,
  title={Strong solutions to a nonlinear stochastic aggregation-diffusion equation},
  author={Tang, H. and Wang, Z-A.},
  journal={Commun. Contemp. Math.},
  volume={26},
  number={02},
  pages={2250073},
  year={2024},
  publisher={World Scientific}
}

@article{chen2025well,
  title={Well-posedness of stochastic chemotaxis system},
  author={Chen, Y. and Zhai, J. and Zhang, T.},
  journal={J. Differ. Equ.},
  volume={442},
  pages={113531},
  year={2025},
  publisher={Elsevier}
}

@article{dhariwal2019global,
  title={Global martingale solutions for a stochastic population cross-diffusion system},
  author={Dhariwal, G. and J{\"u}ngel, A. and Zamponi, N.},
  journal={Stoch. Process. and  Appl.},
  volume={129},
  number={10},
  pages={3792--3820},
  year={2019},
  publisher={Elsevier}
}

@article{law2014existence,
  title={The existence of martingale solutions to the stochastic Boussinesq equations},
  author={Brze{\'z}niaka, Z. and Motylb, E.},
  journal={Global. Stoch. Anal.},
  volume={1},
  number={2},
  pages={175--216},
  year={2014}
}

@article{brzezniak2013existence,
  title={Existence of a martingale solution of the stochastic Navier-Stokes equations in unbounded {$2D$} and {$3D$} domains},
  author={Brze{\'z}niak, Z. and Motyl, E.},
  journal={J. Differ. Equ.},
  volume={254},
  number={4},
  pages={1627--1685},
  year={2013},
  publisher={Elsevier}
}

@article{brzezniak2017invariant,
  title={Invariant measure for the stochastic Navier-Stokes equations in unbounded {$2D$ } domains},
  author={Brze{\'z}niak, Z. and Motyl, E. and Ondrejat, M.},
journal={Ann.Probab.},
  volume={45},
  number={5},
  pages={3145–3201},
  year={2017}
}

@article{brzeźniak2019fractionally,
  title={Fractionally Dissipative Stochastic Quasi-Geostrophic Type Equations on {R\^{d}}},
  author={Brze{\'z}niak, Z. and Motyl, E.},
  journal={SIAM J. MATH. ANAL.},
  volume={51},
  number={3},
  pages={2306--2358},
  year={2019},
  publisher={SIAM}
}

@article{braukhoff2024global,
  title={Global martingale solutions for stochastic Shigesada-Kawasaki-Teramoto population models},
  author={Braukhoff, M. and Huber, F. and J{\"u}ngel, A.},
  journal={Stoch PDE: Anal Comp},
  volume={12},
  number={1},
  pages={525--575},
  year={2024},
  publisher={Springer}
}

@article{tang2018pathwise,
  title={On the Pathwise Solutions to the Camassa-Holm Equation with Multiplicative Noise},
  author={Tang, H.},
  journal={SIAM J. APPL. MATH.},
  volume={50},
  number={1},
  pages={1322--1366},
  year={2018},
  publisher={SIAM}
}

@article{grieser2002uniform,
  title={Uniform bounds for eigenfunctions of the Laplacian on manifolds with boundary},
  author={Grieser, D.},
  journal={Commun. Partial Differ. Equ},
  volume={27},
  number={7-8},
  pages={1283--1299},
  year={2002},
  publisher={Taylor \& Francis}
}

@article{kroger1992upper,
  title={Upper bounds for the Neumann eigenvalues on a bounded domain in Euclidean space},
  author={Kr{\"o}ger, P.},
  journal={J. Funct. Anal.},
  volume={106},
  number={2},
  pages={353--357},
  year={1992},
  publisher={Elsevier}
}

@article{bian2016nonlocal,
  title={A nonlocal reaction diffusion equation and its relation with Fujita exponent},
  author={Bian, S. and Chen, L.},
  journal={J. Math. Anal. Appl.},
  volume={444},
  number={2},
  pages={1479--1489},
  year={2016},
  publisher={Elsevier}
}

@article{kuehn2020pathwise,
  title={Pathwise mild solutions for quasilinear stochastic partial differential equations},
  author={Kuehn, C. and Neam{\c{t}}u, A.},
  journal={J. Differ. Equ.},
  volume={269},
  number={3},
  pages={2185--2227},
  year={2020},
  publisher={Elsevier}
}

@article{krylov2010ito,
  title={{It{\^o}’s} formula for the {$L^{p}$}-norm of stochastic {$W^{1,p}$}-valued processes.},
  author={Krylov, N. V.},
  journal={Probab. Theory Relat. Fields.},
  volume={147},
pages={583--605},
  year={2010}
}

@article{fischer2018existence,
  title={Existence of positive solutions to stochastic thin-film equations},
  author={Fischer, J. and Gru{\"u}n, G.},
  journal={SIAM J. MATH. ANAL.},
  volume={50},
  number={1},
  pages={411--455},
  year={2018},
  publisher={SIAM}
}

@article{debussche2011local,
  title={Local martingale and pathwise solutions for an abstract fluids model},
  author={Debussche, A. and Glatt-Holtz, N. and Temam, R.},
  journal={Phys. D},
  volume={240},
  number={14-15},
  pages={1123--1144},
  year={2011},
  publisher={Elsevier}
}

@article{salins2022global,
  title={Global solutions for the stochastic reaction-diffusion equation with super-linear multiplicative noise and strong dissipativity},
  author={Salins, M.},
  journal={Electron. J. Probab.},
  volume={27},
  pages={1--17},
  year={2022},
  publisher={The Institute of Mathematical Statistics and the Bernoulli Society}
}

@article{zhai20202d,
  title={{$2D$} stochastic chemotaxis-Navier-Stokes system},
  author={Zhai, J. and Zhang, T.},
  journal={J.  Math. Pure. Appl.},
  volume={138},
  pages={307--355},
  year={2020},
  publisher={Elsevier}
}

@article{chen2025mean,
  title={Mean-Field Control for Diffusion Aggregation Equation with Coulomb Interaction},
  author={Chen, L. and Wang, Y. and Wang, Z.},
  journal={SIAM. J. CONTROL. OPTIM.},
  volume={63},
  number={4},
  pages={3061--3090},
  year={2025},
  publisher={SIAM}
}

@article{chen2026fluctuations,
  title={Fluctuations around the mean-field limit for attractive Riesz potentials in the moderate regime},
  author={Chen, L. and Holzinger, A. and J{\"u}ngel, A.},
  journal={ARCH. RATION. MECH. AN.},
  volume={250},
  number={1},
  pages={5},
  year={2026},
  publisher={Springer}
}

@article{nikolaev2025quantitative,
  title={Quantitative relative entropy estimates for interacting particle systems with common noise},
  author={Nikolaev, P.},
  journal={SIAM. J. MATH. ANAL.},
  volume={57},
  number={3},
  pages={3071--3109},
  year={2025},
  publisher={SIAM}
}

@article{chen2025hegselmann,
  title={Hegselmann-Krause model with environmental noise},
  author={Chen, L. and Nikolaev, P. and Pr{\"o}mel, D.},
  journal={T. Am. MATH. SOC.},
  volume={378},
  number={01},
  pages={527--567},
  year={2025}
}

@article{bresch2023mean,
  title={Mean field limit and quantitative estimates with singular attractive kernels},
  author={Bresch, D. and Jabin, P. and Wang, Z.},
  journal={Duke. Math. J.},
  volume={172},
  number={13},
  pages={2591--2641},
  year={2023},
  publisher={Duke University Press}
}

@incollection{carrillo2014derivation,
  title={The derivation of swarming models: mean-field limit and Wasserstein distances},
  author={Carrillo, J.A. and Choi, Y-P. and Hauray, M.},
  booktitle={Collective Dyn. Bacteria Crowds.},
  pages={1--46},
year={2014},
  publisher={Springer}
}

@incollection{jabin2017mean,
  title={Mean field limit for stochastic particle systems},
  author={Jabin, P-E. and Wang, Z.},
  booktitle={Active Particles, Vol. 1.},
  pages={379--402},
  year={2017},
  publisher={Springer}
}
	\bibliographystyle{plain}  

\end{document}